\title{How Big Should Your Data Really Be?\\ 
Data-Driven Newsvendor: Learning One Sample at a Time
}
\author{Omar Besbes, Omar Mouchtaki \\ Columbia University, Graduate School of Business\\
\texttt{obesbes@columbia.edu}, \texttt{om2316@gsb.columbia.edu} }
\newtheorem{lemma}{Lemma}
\newtheorem{proposition}{Proposition}
\newtheorem{corollary}{Corollary}
\newtheorem{theorem}{Theorem}
\newtheorem{assumption}{Assumption}
\newtheorem{definition}{Definition}
\newcommand {\beq}{\begin{equation}}
\newcommand {\eeq}{\end{equation}}
\newcommand {\beqn}{\begin{equation*}}
\newcommand {\eeqn}{\end{equation*}}
\newcommand {\bear}{\begin{eqnarray}}
\newcommand {\eear}{\end{eqnarray}}
\newcommand {\bearn}{\begin{eqnarray*}}
\newcommand {\eearn}{\end{eqnarray*}}
\DeclareMathOperator*{\argmax}{arg\,max}
\newcommand{\ratio}{{\cal R} }
\newcommand{\opt}{\mbox{opt}}
\newcommand{\numS}{n}
\newcommand{\OS}[1]{ \pi^{OS_{#1}} }
\newcommand{\setOS}{\Pi^{OS}_{\numS}}
\newcommand{\SAA}{ \pi^{\text{SAA}}}
\newcommand{\simplex}{\Delta_{\numS}}
\newcommand{\mix}{ \pi^{\bm{\lambda}}}
\newcommand{\optpol}{ \pi^{k,\gamma}}
\newcommand{\cu}{b}
\newcommand{\co}{h}
\newcommand{\DS}{\pi^{\Sigma_\mathbf{e}}} 
\newcommand{\DSf}{\pi^{\Sigma_\mathbf{f}}} 
\newcommand{\derpol}{ \pi^{\text{cvx}(k,\gamma)}}
\DeclarePairedDelimiter\ceil{\lceil}{\rceil}
\begin{document}

\date{first version: March 15, 2021;  last revised: July 22, 2022}

\maketitle
\vspace{-10mm}
\begin{abstract}
We study the classical newsvendor problem in which the decision-maker must trade-off underage and overage costs. In contrast to the typical setting, we assume that the decision-maker does not know the underlying distribution driving uncertainty but has  only access to historical data. In turn, the key questions are \textit{how to map existing data to a decision} and what type of performance to expect \textit{as a function of the data size}. We analyze the classical setting with access to past samples drawn from the   distribution (e.g., past demand), focusing not only on asymptotic performance but also on what we call the \textit{transient regime of learning}, i.e., performance for arbitrary data sizes.  We evaluate the performance of any algorithm through its worst-case relative expected regret, compared to an oracle with knowledge of the distribution. We provide the first finite sample \textit{exact} analysis of the classical  Sample Average Approximation (SAA)  algorithm for this class of problems across \textit{all} data sizes. This allows to uncover novel fundamental insights on the value of data: it  reveals that \textit{tens of samples} are  sufficient to perform very efficiently but also that more data can lead to worse out-of-sample performance for SAA.  We then focus on the general class of mappings from data to decisions \textit{without} any restriction on the set of policies and derive an optimal algorithm (in the minimax sense) as well as characterize its associated performance. This leads to significant improvements for limited data sizes, and allows to exactly quantify the value of historical information.

\medskip

\textbf{Keywords:} Limited data,  data-driven decisions,  minimax regret, sample average approximation, empirical optimization, finite samples, distributionally robust optimization. 
\end{abstract}
\section{Introduction}
The newsvendor problem is a prototypical model of decision-making under uncertainty that captures the trade-offs emerging in capacity or inventory decisions in the face of uncertainty in future outcomes. For example, when setting inventory decisions, a decision-maker typically faces uncertainty with regard to the demand that will materialize. For a given decision, if the demand turns out to be lower, the decision-maker would incur overage costs and if the demand realization is higher than the inventory, then some underage costs would be incurred for unsatisfied demand. Such trade-offs for inventory decisions  represent some of the most common operational problems faced by retailers. Newsvendor type trade-offs  also emerge in a variety of other applications, e.g., in revenue management for capacity setting or overbooking, or in electricity markets for setting capacity levels. 

The key to solving the trade-offs above and optimizing decisions is a statistical characterization of the uncertainty the decision-maker faces, typically captured by a distribution. In the inventory example above, this would correspond to the distribution of demand in the period between replenishments. 
In practice, the distribution is typically unknown and the only way to solve the trade-offs above,  is through the data that has been collected. The main questions this paper focuses on are the following: 
How should a newsvendor decision-maker optimize decisions given the historical data they have collected? What is the optimal performance they can garner as a function of the data size? 
We are interested in understanding the full spectrum of performance of policies across data sizes and refer to this approach as the transient regime lens for learning.

In more detail, we are interested in analyzing central policies in the literature, in optimizing data-driven policies, and in understanding whether one could characterize the performance achievable \textit{across data sizes}, small and large. The motivation to develop a framework for understanding performance for arbitrary data sizes has strong anchoring in both practice and theory. Despite the apparent wide availability of demand data, we posit that ``relevant'' data may be limited in practice due to the heterogeneity of market characteristics. For example, a year of weekly demand for a product only represents tens of samples, and assuming homogeneity of demand on a longer period of time may be too strong of a practical assumption. On the theory front, such a  framework would  provide a foundation for a bottom-up approach to data-driven decisions and would reveal the true robust value of data.

To analyze these questions, we focus on the typical data structure that comes in the form of past samples from the unknown distribution. This would correspond, for example, to demand observations in the inventory example. A data-driven policy is then a mapping from historical data to decisions. For any such policy, we evaluate its performance according to the worst-case (over all possible distributions) expected relative regret defined as the difference between the expected out-of-sample cost incurred by the data-driven policy and the expected optimal cost of an oracle that knows the distribution, normalized by the latter cost.

  It is important to highlight that evaluating the worst-case performance of a given data-driven algorithm against an arbitrary unknown distribution amounts to an intricate infinite dimensional optimization problem over the space of distributions. The  characterization of an optimal algorithm and its performance, or even the  exact performance of specific data-driven algorithms, have been mostly elusive to date. 
\vspace{-3mm}
\subsection{Main contributions}

\textit{Sample Average Approximation Analysis.} A popular and central approach to such data-driven problems is the Sample  Average Approximation (SAA) algorithm (also typically referred to as Empirical Optimization) which minimizes the expected cost according to the empirical distribution induced by the  observed samples. This method has been introduced to solve various stochastic optimization problems and enjoys asymptotic guarantees (\cite{SAA_First}).   In the context of newsvendor decisions,  state-of-the-art \textit{instance-independent} results on the number of samples required for SAA to achieve a particular confidence level were derived in \cite{levi2} and in \cite{cheung2019sampling}. While they capture the correct dependence on the confidence level as the number of samples grows large, state-of-the-art lower and upper bounds on the number of samples required to achieve a particular confidence level differ by orders of magnitude, leading to significant uncertainty on the  value of information, or on the quality of SAA. As such, despite its wide use and central role in the literature and in practice, to date, a significant gap exists in the understanding of the \textit{actual}  performance of this policy and the value it can capture from data. 

Our first main contribution is the characterization of the \textit{exact} performance of SAA for newsvendor problems for arbitrary data sizes (\Cref{thm:SAA}).  While determining the performance of the SAA algorithm is a priori an infinite dimensional optimization problem over the space of possible distributions, we actually establish that it is possible to reduce it to a one dimensional optimization problem and in turn derive a quasi closed-form solution that gives the exact worst-case relative regret of SAA. This worst-case is derived \textit{for any number of samples} and can be computed efficiently using a line-search.
Our method relies on the structure of the newsvendor problem and develops an analysis that leads to the  SAA performance as a corollary. We establish that for any policy that can be expressed as an order statistic or a randomization of order statistics of the empirical distribution,   one can transform the initial problem into a pointwise optimization problem of an appropriate functional, which ultimately leads to identifying the family of worst-case distributions for such policies (\Cref{thm:main_mixture}).  In particular, we establish, quite interestingly, that across all distributions, the worst case is a Bernoulli distribution whose mean depends on the number of samples observed. In turn, this yields the worst-case performance of the SAA policy as the solution of a one-dimensional search.  While our analysis reveals that the worst-case distribution is a Bernoulli, the induced worst-case performance we obtain can be applied to bound the relative regret of SAA against \textit{any} distribution.

Quite notably, this enables, for the first time, to fully characterize   the spectrum of performances achievable by SAA \textit{across data sizes}. 
These results highlight many fundamentally novel insights on the value of information.  As examples, with 20 samples, SAA already leads to a relative regret of  $26.8 \%$, and with 100 samples, the relative regret shrinks to $8.1 \%$. This highlights the possibility of making effective decisions already with very limited data.
 As a matter of fact, in \Cref{tab:Number_samples} below, we show that the number of samples required by SAA to achieve a certain level of accuracy, as derived from the analysis in this paper, is actually two orders of magnitudes lower than the number induced by state-of-the art upper bounds on the expected relative regret in the existing literature (see \Cref{sec:comp} and \Cref{sec:opt-VI}).
\begin{table}[h!]
\centering
\begin{tabular}{lllllll}
& & \multicolumn{5}{c}{Expected relative regret target}\\
\cline{3-7}
& & 25\% & 20\% & 15\% & 10\% & 5\%  \\ 
 \hline
SAA & This paper &  21&  23&  42& 71&  210 \\
        & Best known to date    &  5,088&  7,780&  13,530& 29,762& 100,000+ \\
\hline
Optimal Algorithm&  This paper  &  14&  19&  25& 50& 161\\
\end{tabular}
\caption{\textbf{Number of samples that ensures  a target relative regret.}  The table reports the induced number of samples needed to reach a relative regret accuracy level. For SAA, we  compare the best instance-independent known bounds to date \citep{levi2}  and the exact worst-case analysis  developed in the present paper.  We also report  the number of samples needed by an optimal data-driven algorithm, derived in the present paper. Example with service level of $0.9$.}
\label{tab:Number_samples}
\end{table}

Our analysis of the SAA policy also leads to another striking new insight: the relative regret is not monotone in the number of samples available. This  implies that SAA is suboptimal but also that SAA is not able to accumulate information appropriately, sometimes ``destroying'' information. We highlight that the possibility to uncover this non-monotonic behavior has been enabled by the transient regime lens for learning we take.

 \textit{Optimal data-driven policy.} In turn, the next question we tackle pertains to optimal worst-case performance in the space of data-driven policies. Indeed, it is important to note that SAA is only one possible prescription among all possible mappings from data to decisions.  Our next contribution lies in characterizing the minimal worst-case expected relative regret  in the general space of data-driven policies and across all data sizes. In the remaining of the paper we refer to the algorithm achieving the minimax optimality for the worst-case expected relative regret as the optimal algorithm.
 
To prove this fundamental optimality result, we first establish a central reduction in the space of mechanisms. We show that, \textit{without loss of optimality}, one may restrict attention to mixtures of order statistics (\Cref{thm:minimax_reduction}). In turn, we derive necessary conditions for optimality in that subspace. This leads to a candidate policy. The last step consists of establishing optimality of this candidate.  For that, we introduce an alternative minimax problem in which we relax the space of distributions to be distributions over distributions, and show that the candidate, together with  a proper mixture of distributions, is a saddle point for the alternate problem. This yields an optimal data-driven algorithm and its associated performance for the original problem (\Cref{thm:optimal_min_max}).

We establish that an optimal data-driven policy takes actually a simple form: it is a randomization between two consecutive order statistics, and we  provide a procedure to compute its tuning parameters as well as its performance. As a corollary, we obtain that an alternative policy, which prescribes a convex combination of consecutive order statistics, is also minimax optimal while also (weakly) improving over the optimal mixture of order statistics policy for all possible demand distributions.

This result has significant implications. First, we can now assess the potential losses stemming from using the suboptimal SAA policy. We show that these can be significant for small data sizes and become smaller as the data size increases. Second, it allows to exactly \textit{assess the value of the historical information} and to understand how effective one can be as a function of the data at hand without any  assumption on the underlying distribution. This further emphasizes the possibility of operating effectively with limited data. In  \Cref{tab:Number_samples}, we report the number of samples required to reach a particular level of accuracy, and one sees that, compared to SAA,  the optimal algorithm reduces the amount of data  needed to reach a particular level significantly (by 17\% to 40\% across the targets illustrated). We also note that, as a byproduct of our analysis, we also obtain the worst-case  performance ratio for any Bayesian problem.

We highlight here that there has been significant work on data-driven policies. We discuss more these in the literature review and refer to \cite{lam2021impossibility} for a very recent overview of various subfamilies of policies considered in the literature. We also emphasize that when searching for optimal policies, we consider all possible mappings from data to decisions, and do not restrict attention to a subfamily of policies.

We note that, while our analysis is tailored around the worst-case relative regret, the minimax optimal policy that we derive is not overly conservative on ``mild'' instances. As a matter of fact, we show numerically in \Cref{sec:experiment} that the performance of the optimal policy is typically on par or better than the one of SAA on a broad range of distributions. As a consequence, the ``robustification'' of SAA in the worst-case comes at no cost and even  typically translates into better performance on many common distributions.

 \textit{Optimal asymptotic performance.}  When the data size becomes large, there are various existing results in the literature, and a corollary of these leads to upper bounds on the rate of convergence to zero of the expected relative regret of SAA  as the data size $\numS$ grows to infinity: it converges to zero at rate $O(1/\sqrt{\numS})$. While this makes progress on capturing the  dependence in the data size $\numS$, even asymptotically, there is still limited understanding of the actual performance. In particular, there is no understanding of the  \textit{constant} characterizing the rate of convergence for SAA, nor for the best such constant achievable by a data-driven algorithm.
  
  We leverage the exact finite sample analysis to derive, from the bottom up, the exact rate of convergence to zero, fully characterizing the constant for SAA and optimal performance. In particular, we show that the optimal relative regret asymptotically scales like $C^*/ \sqrt{\numS}$ where the number of samples $\numS$ is large and provide a closed form expression for $C^*$ (\Cref{thm:asymptotic}). This highlights how the rate of convergence is affected by the various economic parameters associated with the newsvendor decision. In addition, we establish that SAA asymptotically achieves rate optimality with the \textit{same} limiting constant.  As such, while SAA could lead to high suboptimality gaps for small data sizes, it satisfies a very strong notion of near-optimality for large data sizes.

 Stepping back, one may see the present study as building a foundation for a ``bottom-up'' approach to data-driven decision-making in newsvendor settings. We highlight that our transient regime lens for learning and the associated exact analysis account for all the expected out-of-sample cost implications of deviations, small or large,  that SAA (or an optimal policy) could generate compared to the oracle. As such, it allows to build an understanding of data-driven policies ``one data point at a time.'' Compared to existing approaches that are mostly anchored around large data regimes, this new perspective establishes that it is indeed possible to characterize performance across data sizes. It leads to new insights for small as well as large data regimes. We hope that the techniques developed here lead to further the understanding of the transient regime of learning in richer settings relating to newsvendors, but also across problem classes.  

\vspace{-3mm}
\subsection{Related literature}

Capacity management problems in the face of uncertainty are central across literatures and the present paper builds on and contributes to a vast existing literature.

 Our work first relates to the study of this class of problems with limited information on the underlying distribution of demand. In early work, \cite{scarf1958min} and \cite{gallego1993distribution} characterizes the min-max optimal solution for the inventory problem when the mean and the variance of the demand are known. \cite{perakis2008regret}  derive robust policies that achieve minimax regret under various types of partial information on the demand function such as moments of the distribution, modes or symmetry.   \cite{natarajan2018asymmetry} carries this robustness analysis for asymmetric distributions. 
 
 Information about the distribution may also be improved by a data-driven approach. \cite{xu2021robust} construct ambiguity sets by using non-parametric information on the distribution along with observed samples. \cite{saghafian2016newsvendor} develop a Maximum Entropy approach to leverage information from data combined with moment and tail bounds.  \cite{liyanage2005practical,chu2008solving}  introduce the operational statistic framework which integrates estimation and optimization tasks for newsvendor problems under parametric classes of distributions. \cite{chu2008solving}, assuming that the decision-maker knows the distribution of demand up to a scale parameter,  derives a mapping from data to decision that maximizes expected profit for any value of the unknown scale parameter.   In the present work, we do not make any assumption on the underlying distribution of demand.  
 
When no such information is initially available, the question becomes how to go from data to decisions. There are various dimensions associated with this problem, first on the level of uncertainty about the underlying distribution, and second on the offline or online aspect of the decision-making problem. 
  
 The present paper focuses on a non-parametric setting in which little, if anything is known about the underlying distribution and only data in the form of samples can be used to make decisions. A first foundational question for this class of problems is one pertaining to sample complexity: How many samples are needed to achieve a certain level of accuracy?   Closest  to our work  are \cite{levi2007approximation}, \cite{levi2} and \cite{cheung2019sampling} which establish bounds on probabilistic guarantees of the relative regret of Sample Average Approximation (SAA). In particular \cite{levi2} presents bounds that are problem-independent and apply to any distribution,  and we compare to those in  \Cref{sec:comp}.  \cite{levi2} also improve these bounds by deriving instance dependent guarantees in cases where the decision-maker has additional information about the class of distributions to which the demand belongs. In contrast, our work improves the characterization of the worst-case expected relative regret of SAA without any supplementary information about the distribution. \cite{cheung2019sampling} provides a lower bound on the number of samples required to achieve a target relative regret with a probability exceeding a given threshold. Their result implies that the upper bound derived in \cite{levi2} has the correct dependence in the problem parameters.    \cite{ban2020confidence}  establishes consistent estimators for $(s,S)$ policies for both censored and uncensored information regimes. They derive bounds on the regret by constructing asymptotic confidence intervals around the policy.

  In the offline setting, other related papers study the contextual version of the problem in which the decision-maker observes previous samples of demand along with features that give additional information on the environment \citep{ban2019big,qi2021distributionally}. \cite{ban2019big}  proposes approaches based on Empirical Risk Minimization and kernel methods to derive generalization bounds for the cost of a feature-based data-driven decision. In the special case without contexts, their approach recovers the instance-independent bound derived by \cite{levi2}. In contextual optimization, we also refer the reader to  \cite{elmachtoub2021smart} for a general data-driven approach that explicitly accounts for the nature of the optimization problem at hand.

  Our paper also relates to the rich literature on the analysis of SAA. This approach has been applied broadly for discrete stochastic optimization problems when the underlying distribution is either unknown or when the expected objective function is hard to optimize, \cite{SAA_First}, or for multi-stage stochastic optimization problem (\cite{swamy2005sampling, shapiro2008stochastic}). It has also been used in the specific context of multi-stage inventory planning (\cite{levi2007approximation,cheung2019sampling}) or for the newsvendor model (\cite{levi2, besbes2013implications}). 

    \cite{gupta2022data}  share the motivation of limited data sizes, and explore the possibilities associated with pooling data across products.   \cite{ bertsimas2018robust}, \cite{esfahani2018data} develop robust approaches enjoying  probabilistic guarantee over the out-of-sample error.  \cite{bertsimas2018robust} considers ambiguity sets containing all distributions that pass a statistical goodness-of-fit test for given historical data. \cite{esfahani2018data} proposes a data-driven distributionally robust approach by constructing an uncertainty ball around the empirical distribution. They show that the worst-case expectation over a Wasserstein ambiguity set can in fact be computed efficiently via convex optimization techniques for various loss functions. In contrast to theses strategies, we highlight that we do not restrict the space of policies to those that construct uncertainty sets, but explore the entire space of mappings from data to decisions.

More broadly, our work relates to sequential  decision-making under uncertainty. In the class of inventory decisions, a rich line of work on dynamic decisions has been developed.  Different information structures (observable or censored demand) are studied and adaptive algorithms with desirable asymptotic properties derived. \cite{godfrey2001adaptive,huh2009nonparametric,RyzinMcGill2000Revenue} develop gradient based methods to solve this sequential problem whereas \cite{huh2011adaptive} uses the Kaplan-Meier estimator and \cite{ErenMaglaras2015maximum} studies a maximum entropy approach to dynamically adjust capacity levels.  \cite{besbes2013implications} studies the price of demand censoring in these sequential decision making problem with stationary demand, and \cite{lugosi2021hardness} study censoring in a  setting when demand is non-stationary.
\cite{chen2021nonparametric} studies the interplay of pricing and inventory decisions. We note that in all these studies, the performance of policies are characterized asymptotically up to multiplicative constants, but there is no characterization of exact optimal performance. We hope that the exact performance characterization, and optimality results, developed in this work in the offline case with  demand observations, will lead to future progress in this related class of problems.

Our work is also connected to the rich literature in statistics \label{page:lit-stat} which focuses on the performance of various quantile estimators, and our problem may be reframed as the one of deriving minimax quantile estimators for a particular metric (here the relative regret between newsvendor losses). Depending on the application and the desired properties, many quantile estimators have been derived, either as L-estimators based on order statistics \citep{harrell1982new,Kalgh82,yang1985smooth} or by using different methods such as Stochastic Approximation \citep{tierney1983space}. This profusion of heuristics motivated the study of estimators achieving certain forms of optimality. In parametric settings, \cite{rukhin1982estimating}, and \cite{rukhin1983class} derive minimax equivariant quantile estimators for a normalized squared loss. In non-parametric settings, \cite{zielinski1999best} restricts attention to the set of equivariant estimators and derives an estimator uniformly better with respect to a particular metric, the worst-case F-Mean Absolute Deviation.
 Our work differs from \cite{zielinski1999best} along various crucial dimensions. We focus on the commonly studied objective of  minimax relative regret, on a newsvendor cost,  and we do not restrict the space of decisions. 
 
Our work is also remotely related to the understanding of the learning curve defined as the expected generalization performance, i.e., the out-of-sample performance, of a learner as a function of the size of the training set. We refer the reader to the recent review of \cite{viering2021shape}. Our approach complements this line of work and gives a theoretical understanding of the robust (worst-case) value of data sizes for the newsvendor problem.

Finally, we also note that another related line of work pertains to modeling uncertainty differently. Another framework that has been widely studied  is parametric and Bayesian, in which the decision-maker is assumed to have access to a prior about an underlying unknown parameter that characterizes the distribution.  The seminal work of  \cite{scarf1959bayes} analyses a bayesian setting in which the decision maker has a prior belief on the nature of the distribution and updates his belief as he observes samples from the demand. The goal is then to analyze methods that use historical data to prescribe inventory decisions on the fly. This line of work has also a rich literature dealing with different information structures (censored versus uncensored) observations. See, e.g.,  \cite{Azoury1985Bayes}, \cite{LarivierePorteus1999Stalking},   \cite{DingPutermanEtAl2002Censored} (and the related notes by \cite{LuSongEtAl2005The} and \cite{BensoussanCakanyildirimEtAl2009Technical}) and \cite{BesbChane2019exploration}.

%%%%%%%%%%%%%%%%%%
%%%%%%%%%%%%%%%%%%%

\section{Problem Formulation}
We consider a newsvendor problem in which the decision maker decides on a capacity/inventory decision $x$ in the face of uncertainty on the underlying demand $D$ that will materialize. Any excess inventory leads to overage costs $\co >0$ per unit, and any demand that is not satisfied leads to  underage cost of $\cu >0$ per unit.  In turn, the cost associated with decision $x$ is given by 
 \bearn
 c(x,D) := \cu (D - x)^{+} +  \co (x - D)^{+}.
 \eearn

\paragraph{Decision-making with knowledge of the distribution of $D$.} Suppose that demand $D$ is drawn from a distribution $F$ supported on $\mathbb{R}_{+}$. Then, in the classical newsvendor problem, the decision-maker minimizes the expected cost given by
\begin{equation} \label{eq:fullinfo}
c_F(x) :=  \mathbb{E}_{D \sim F} \left[{ \cu (D - x)^{+} +  \co (x - D)^{+}}\right].
\end{equation}

 Let ${\cal G}$ denote the set of distributions (cdf) with non-negative support. By convention, we assume that all cdf's are cadlag. Furthermore, we let 
$$ {\cal F} = \{F \in {\cal G}: \mathbb{E}_F[D] < \infty\}$$
denote the set of distributions with bounded first moment. We will assume that $D$ is drawn from a distribution in $ {\cal F}$, so that the above cost always  admits a well defined expectation.

When the distribution $F$ is known, the inventory decision minimizing the cost $c_F(\cdot)$ is given by
\begin{equation*}
x^*_F := \min \{x \geq 0 : F(x) \geq q\},
\end{equation*}
where \begin{equation*}
q = \frac{\cu}{\cu+\co}.
\end{equation*}
We will refer to $q$ as the critical quantile, and we will let
\bearn
\opt(F) := c_F(x^*_F)
\eearn
denote the minimal achievable cost with knowledge of the distribution $F$.

\paragraph{Data-driven decision-making.}  In the present paper, we consider a setting in which the distribution $F$ is unknown to the decision-maker and only data is available in the form of past demand observations.  The decision-maker observes $\numS$ historical samples of demand,  $\mathbf{D}_1^\numS := (D_1, \ldots, D_{\numS})$, where $D_i$ are independently sampled from $F$.  In this context, an admissible policy $\pi$ is a mapping from observed demand $\mathbf{D}_1^\numS$ to inventory decision $x^\pi$. 
Formally, we consider the class of policies $\Pi_\numS$ of mappings from $\mathbb{R}_{+}^{\numS}$ into the set of distributions $\mathcal{F}$. In particular, a policy $\pi$ is a mapping
\begin{equation*}
\pi : \mathbf{D}_{1}^\numS \mapsto G_{\mathbf{D}_1^\numS},
\end{equation*}
where $G_{\mathbf{D}_1^\numS} \in \mathcal{F}$. That is to say $\pi$ maps previous demand observations to a randomized inventory decision. We note that, even when the policy $\pi$ is a deterministic function of the observed demand $\mathbf{D}_1^\numS$, the inventory decision $x^\pi$ is a random variable as it depends on  $\mathbf{D}_1^\numS$.

When using a policy  $\pi$, given  $\numS$ samples from an underlying distribution $F$, the out-of-sample expected cost incurred is defined as,
\begin{equation*}
\mathcal{C}(\pi,F,\numS) := \mathbb{E}_{\mathbf{D}_1^\numS \sim F}\left [ \mathbb{E}_{x \sim { \pi \left( \mathbf{D}_{1}^\numS \right)} } \left[  c_F(x) \right] \right].
\end{equation*}
Note that the dependence between the underlying demand distribution and the  expected cost of a data-driven algorithm is an intricate one in general, as \textit{the demand distribution $F$ affects the history the decision-maker observes, but also the out-of-sample performance of data-driven decisions.}

\textit{Objective.} We are interested in understanding and quantifying the performance of  data-driven algorithms for newsvendor problems. To that end, we evaluate the performance of a policy $\pi \in \Pi_\numS$ through the relative regret defined for every $F \in \mathcal{F}$ as\footnote{Note that $\opt(F)=0$ if and only if the distribution $F$ has all its mass at a single point. In such a case, we set,  by convention,  $\ratio_\numS(\pi,F) = 0$, for any policy $\pi \in \Pi_\numS$ such that $\mathcal{C} (\pi,F,\numS) = \opt(F) = 0$.},
\begin{equation*}
\ratio_\numS(\pi,F) := \frac{\mathcal{C}(\pi,F,\numS)-\opt(F)}{\opt(F)}.
\end{equation*}
Note that the ratio above is always greater or equal than $0$ and takes value in $[0, \infty) \cup \{ \infty \}$.
Given that the decision-maker does not know the distribution, we evaluate its performance through the worst-case relative regret defined as follows 
\begin{equation}
\label{eq:ratio}
\sup_{F \in \mathcal{F}}  \ratio_\numS(\pi,F).
\end{equation}
It represents the relative loss stemming from the gap between observing data of size $n$ and full information on the demand distribution.   We will be interested in characterizing the performance of specific policies considered in the literature, but also in the optimal achievable performance 
\begin{equation}
\label{eq:minmax}
\ratio^{*}_\numS  := \inf_{\pi \in \Pi_\numS} \sup_{F \in \mathcal{F}}  \ratio_\numS(\pi,F).
\end{equation}
We note that the above problem involves two infinite dimensional optimization problems, that of the decision-maker when selecting a policy, and that of nature when selecting a worst-case distribution. We also remark that there are no restrictions on the class of policy used in \eqref{eq:minmax}.

\noindent \textbf{Notation.} For any $\mu$ in $[0,1]$, we let $\mathcal{B}(\mu)$ denote the distribution of a Bernoulli with mean $\mu$. For any set $A$, $\Delta \left( A \right)$ denotes the set of distributions on $A$. We further let $\Delta_{\numS}$ denote the simplex in $\numS$ dimensions, i.e., $\Delta_n = \{\bm{\lambda} \in \mathbb{R}^n:  \lambda_i \ge 0, \: i \in \{1,\ldots,\numS\}, \: \sum_{i=1}^\numS \lambda_i = 1\}$. For any deterministic sequences $(a_k)_{k \in \mathbb{N}}$ and $(b_k)_{k \in \mathbb{N}}$ both indexed by a common index $k$ that goes to $\infty$, we say that $a_k = o(b_k)$ if $a_k/b_k \to 0$, $a_k = \mathcal{O} \left( b_k \right)$ if there exists a finite $M >0$ such that $|a_k| \leq M |b_k|$ for $k$ large enough, $a_k = \omega(b_k)$ if $|a_k/b_k| \to \infty$, $a_k = \Omega(b_k)$ if there exist a finite $M > 0$, such that $|a_k| > M |b_k|$ for $k$ large enough, $a_k = \Theta \left( b_k \right)$ if $a_k = \mathcal{O} \left( b_k \right)$ and $a_k = \Omega \left( b_k \right)$, and $a_k \sim b_k$ if $a_k/b_k \to 1$.   

All proofs are deferred to the Online Appendix.

%%%%%%%%%%%
%%%%%%%%%%%

\section{Sample Average Approximation: Performance Analysis across Data Sizes} \label{sec:SAA}
 
The data-driven newsvendor problem is a particular instance of a data-driven stochastic optimization problem. One of the most common approaches to solve this type of problems is the Sample Average Approximation (SAA). As highlighted earlier, this approach has been applied to a broad set of problems. Previous works derived convergence guarantees for this method (\cite{SAA_First}) but also bounds on probabilistic finite sample performance (\cite{levi2007approximation,levi2,cheung2019sampling}). In the newsvendor setting, SAA is also related to the broader literature of Distributionally Robust Optimization as it happens to be equivalent to the distributionally robust policy over the Wasserstein ball as noted in \cite[Remark 6.7]{esfahani2018data}.

In the context of the newsvendor problem,  SAA consists in solving the optimization problem
\begin{equation}
\label{eq:empirical_opt}
\min_x \frac{1}{\numS} \left[ \sum_{i=1}^\numS \cu (D_i -x)^+ + \co (x - D_i )^+ \right].
\end{equation}
This approach approximates the expectation in \eqref{eq:fullinfo}  with the empirical expectation, and solves the resulting problem. In particular, the solution of problem \eqref{eq:empirical_opt} is the $q^{th}$-empirical quantile. More precisely, let us define the order statistics of the historical dataset of demands observed as  
\bearn
D_{1:\numS} \le \ldots \le D_{\numS:\numS}. 
\eearn
The SAA policy, which we will denote by $ \SAA$ prescribes the $\ceil{q \numS}^{th}$ order statistic. With some abuse of notation, we have\footnote{Technically speaking, this is the policy that prescribes a point mass at $D_{\ceil{q \numS}:\numS}$.}
 \begin{equation}
 \label{eq:SAA_OS}
 \SAA(\mathbf{D}_{1}^\numS) = D_{\ceil{q \numS}:\numS}.
 \end{equation}

As highlighted in the introduction, this policy has been extensively studied. In particular, as the number of demand samples $\numS$ grows, it is known that SAA leads to a solution  that ensures that its worst-case relative regret approaches $0$ as $\numS$ grows to $\infty$. Previous approaches derived upper bounds on the rate at which such convergence takes place, leveraging large deviations bounds. However, despite its widespread use, there is no characterization of its \textit{actual} performance for a finite number of samples, and as a result, there is no robust quantification of the amount of data needed to achieve a particular level of performance.

Analyzing exactly the worst-case performance of SAA, or any policy, as in Problem \eqref{eq:ratio}, is an infinite dimensional optimization problem over a non-parametric class of distributions. For any policy $\pi \in \Pi_\numS$, let $G^{\pi}_{\mathbf{D}_1^{\numS}} := \pi \left( \mathbf{D}_1^{\numS} \right)$ be the distribution induced by $\pi$ on the inventory level conditional on observing historical demand $\mathbf{D}_1^{\numS}$ .
The cost incurred by a policy $\pi \in \Pi_\numS$ against a distribution $F \in \mathcal{G}$ can be expressed as follows 
\begin{align}
\label{eq:general_cost}
\mathcal{C} \left( \pi, F, n \right) =  \int_{ \mathbf{D}_1^{\numS} \in [0, \infty)^{\numS} } \int_0^{\infty} \int_0^{\infty} c(x,D)  \: dF(D) \: dG^{\pi}_{\mathbf{D}_1^{\numS}} (x) \: d F(D_1) \: \ldots  \: d F(D_{\numS})
\end{align}
Therefore, the cost of a data-driven policy has in general a complex dependence on the demand distribution which appears in the integration measure.

In what follows, for any policy $\pi \in \Pi_\numS$, when solving for the worst-case performance,  we will be working with the epigraph formulation of problem \eqref{eq:ratio} . Note that $\sup_{F \in \mathcal{F}} \ratio_\numS(\pi,F) \ge 0$ and
\begin{subequations}
\begin{alignat*}{2}
\sup_{F \in \mathcal{F}} \ratio_\numS(\pi, F) \:=\: &\! \inf_{z \in \mathbb{R}_{+} }        &\qquad& z \nonumber\\
&\text{s.t.} &      & \ratio_\numS(\pi, F) \leq z \qquad \forall F \in \mathcal{F}. \nonumber
\end{alignat*}
\end{subequations}
It is easy to see that the problem can be further written as (this claim is formally established in \Cref{lem:epigraph_form})
\begin{subequations}
\label{eq:whole_class_pb_body}
\begin{alignat}{2}
\sup_{F \in \mathcal{F}} \ratio_\numS(\pi, F) \:=\: &\! \inf_{z \in \mathbb{R}_{+} }        &\qquad& z \\
&\text{s.t.} &      & \mathcal{C} \left( \pi, F, \numS \right) - (z+1) \opt(F) \leq 0 \qquad \forall F \in \mathcal{F}. \label{eq:constraint_epi_body}
\end{alignat}
\end{subequations}
This problem thus involves infinitely many constraints, and each of the constraints has a complex dependence in $F$ as highlighted in \eqref{eq:general_cost}. 
In \Cref{sec:OS_structure}, we analyze a general class of order statistic policies, of which SAA is a special case, and  show  that for these, the optimization problem \eqref{eq:whole_class_pb_body}  can be significantly simplified and as a matter of fact exactly solved. In particular, we leverage the structure of order statistic policies to simplify the expression of the cost function described in \eqref{eq:general_cost}. This allows us to reduce the set of constraints \eqref{eq:constraint_epi_body} to constraints parametrized by a one dimensional set.

\subsection{Order Statistic Policies and Structural Results} \label{sec:OS_structure}
As presented in \eqref{eq:SAA_OS}, in the context of the newsvendor problem, SAA prescribes an inventory level according to an order statistic of the samples observed, the $\ceil{q \numS}^{th}$ order statistic. This   can be seen as a special case of prescribing an order statistic or even a randomization over order statistics. To that end, we next define general order statistics policies which will also play a central role when we discuss optimal performance in \Cref{sec:optimal}.
\begin{definition}[Mixture of Order Statistics]\label{def:OS}
Fix $\numS \ge 1$. For every $i \in \{1,\ldots,\numS\}$, we let $\OS{i}$ denote the policy that uses the $i^{th}$ order statistic $D_{i:\numS}$ with probability one. Formally,
\begin{equation*}
\OS{i} : \mathbf{D}_1^{\numS}  \mapsto \mathbbm{1} \left \{ x \geq D_{i:\numS} \right \} .
\end{equation*}
For any $\bm{\lambda} \in \simplex$. We let   $\pi^{\bm{\lambda}}$ denote the mixture of order statistics policy defined as follows: $\pi^{\bm{\lambda}}$  prescribes  the $i^{th}$ order statistic $D_{i:\numS}$ with probability $\lambda_i$. Formally, 
\begin{equation*}
\mix : \mathbf{D}_1^{\numS}  \mapsto \sum_{i=1}^{\numS} \lambda_i \mathbbm{1} \left \{ x \geq D_{i:\numS} \right \}.
\end{equation*}
 In the following, we denote by $\Pi_\numS^{OS}$ the space of mixture of order statistics policies with $\numS$ samples. 
\end{definition}
Another important class of policies are convex combinations of order statistics policies, which prescribe a deterministic convex combination instead of randomizing between different order statistics. We will relate the performance of policies in this class to the one for mixtures of order statistics policies in \Cref{sec:lower_bound}.

For a policy $\pi \in \Pi_{\numS}$, the expression of the relative regret involves the ratio between  $\mathcal{C} \left( \pi, F,\numS \right)$ and $\opt(F)$. In general, both quantities require to compute complex integral expressions in which the dependence on the demand distribution is intricate. 
Our first structural result establishes that for a mixture of order statistics policy, the cost incurred against any demand distribution $F$ can be expressed as a single integral in which the integrand is a polynomial of the demand distribution and the integrating measure is the Lebesgue measure. We similarly show that the cost of the oracle is the integral of a piecewise-linear function of the demand distribution.  Formally, we show the following.
\begin{proposition}
\label{lem:quantile_policy_cost}
For any $F \in \mathcal{F}$, any $\numS \geq 1$, and any mixture of order statistics policy $\pi^{\bm{\lambda}}$ we have,
\bearn
\mathcal{C} \left( \pi^{\bm{\lambda}}, F, n \right) &=& (\cu+\co) \left[\int_0^{\infty} \sum_{i=1}^{\numS} \lambda_i \left( (1- B_{i,\numS} (F(y)) ) (F(y)-q)  + q(1-F(y)) \right)dy \right],\\
\opt(F) &=& (\cu+\co) \int_0^{\infty}   \min\{(1-q)F(y),  q(1-F(y))\} dy,
\eearn
where $B_{i,\numS}$ is a Bernstein polynomial defined  for any $y \in [0,1]$ as
\begin{equation*}
\label{eq:order_stat}
B_{i,\numS}(y)  = \sum_{j=i}^\numS b_{j,\numS}(y),
\end{equation*}
with $b_{j,\numS}(y) = {\numS \choose j} y^j(1-y)^{\numS-j}$.
\end{proposition}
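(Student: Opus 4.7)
The plan is to reduce both the expected cost of a mixture-of-order-statistics policy and the oracle cost to one-dimensional Lebesgue integrals whose integrand depends only on $F(y)$. The key ingredients are (i) the tail-expectation identities for the underage and overage pieces of $c_F$, (ii) Fubini's theorem to swap expectation with the integration in $y$, and (iii) the standard fact that the CDF of the $i^{\text{th}}$ order statistic of $n$ i.i.d.\ draws from $F$ is exactly $B_{i,\numS}(F(\cdot))$.

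For the policy cost, I would first write $\mathcal{C}(\pi^{\bm{\lambda}}, F, \numS) = \sum_{i=1}^{\numS} \lambda_i \, \Ex\!\left[c_F(D_{i:\numS})\right]$ by linearity, since the mixture puts mass $\lambda_i$ on the deterministic decision $D_{i:\numS}$. I would then use the identities $\Ex_{D \sim F}[(D-x)^+] = \int_x^{\infty}(1-F(y))\,dy$ and $\Ex_{D \sim F}[(x-D)^+] = \int_0^x F(y)\,dy$ to rewrite $c_F(D_{i:\numS})$, and apply Fubini (justified by non-negativity) to obtain
\begin{equation*}
\Ex[c_F(D_{i:\numS})] = \cu \int_0^\infty (1-F(y))\,\Prob(D_{i:\numS} \le y)\,dy + \co \int_0^\infty F(y)\,\Prob(D_{i:\numS} > y)\,dy.
\end{equation*}
Plugging in $\Prob(D_{i:\numS}\le y) = B_{i,\numS}(F(y))$, factoring out $(\cu+\co)$, using $q=\cu/(\cu+\co)$, and collecting terms algebraically should yield the integrand $(1-B_{i,\numS}(F(y)))(F(y)-q)+q(1-F(y))$; this last step is just a short algebraic check that $q(1-F)B_{i,\numS}(F)+(1-q)F(1-B_{i,\numS}(F))$ coincides with the stated expression.

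For the oracle cost, I would apply the same tail-integral decomposition at $x^*_F$ to obtain $\opt(F) = \cu\int_{x^*_F}^\infty (1-F(y))\,dy + \co \int_0^{x^*_F} F(y)\,dy$, and then observe that by the definition of $x^*_F$ and the cadlag convention, $F(y) < q$ for $y < x^*_F$ and $F(y) \ge q$ for $y \ge x^*_F$. Since $(1-q)F(y) \le q(1-F(y))$ if and only if $F(y) \le q$, the minimum $\min\{(1-q)F(y),\, q(1-F(y))\}$ switches exactly at $x^*_F$; rewriting $\cu = (\cu+\co)q$ and $\co=(\cu+\co)(1-q)$ then matches the two expressions on each side of $x^*_F$.

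The proof is essentially bookkeeping, with no single difficult step. The only place to be mildly careful is justifying Fubini and handling the value of $F$ at the single point $x^*_F$, which does not affect the Lebesgue integral; the algebraic rearrangement producing the particular closed form displayed in the statement is the one step most easily botched, so I would verify it explicitly.
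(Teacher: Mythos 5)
Your proposal is correct and follows essentially the same route as the paper: rewrite $c_F$ as a Lebesgue integral of $F$ (the paper's Lemma A-1 form $b(\mathbb{E}_F[D]-x)+(b+h)\int_0^x F$, equivalent to your tail-integral version), apply Fubini–Tonelli, and substitute $\Prob(D_{i:\numS}\le y)=B_{i,\numS}(F(y))$. The algebraic collection you flag does indeed check out, $q(1-F)B + (1-q)F(1-B) = (1-B)(F-q)+q(1-F)$, and the oracle computation matches the paper's step by step.
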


Recall the expression in \eqref{eq:general_cost}, \Cref{lem:quantile_policy_cost} shows that for mixture of order statistics policies, the cost function can be expressed as an integral in which the dependence in $F$ \textit{only} appears in the integrand but does not appear in the integrating measure anymore.  As we will see, this is a key step towards the understanding of worst case distributions for this family of policies.

The main step in the proof of this result follows from Riemann–Stieltjes integration by part and from exploiting the special form of the cumulative distribution function of an order statistic. Indeed, for any $F \in \mathcal{F}$, $\numS \geq 1$ and $r \in \{1,\ldots, \numS\}$, the cumulative distribution of $D_{r:\numS}$ denoted by $F_{r:\numS}$ satisfies for $x \in \mathbb{R}_{+}$,
\begin{equation*}
F_{r:\numS}(x) = B_{r,\numS} \left( F(x) \right).
\end{equation*}
The new expressions in \Cref{lem:quantile_policy_cost} imply that the epigraph formulation \eqref{eq:whole_class_pb_body} can be simplified. In particular, by rewriting the set of constraints \eqref{eq:constraint_epi_body}, we obtain the following formulation.
\begin{subequations}
\label{eq:whole_class_pb_2_body}
\begin{alignat}{2}
 &\! \inf_{z \in \mathbb{R}  }        &\qquad& z \\
&\text{s.t.} &      & \sup_{F \in \cal{F}} \int_0^\infty \Psi_z^{\bm{ \lambda}}(F(y)) dy \leq 0,
\end{alignat}
\end{subequations}
where $\Psi_z^{\bm{\lambda}}$ is a continuous mapping from $[0,1]$ to $\mathbb{R}$.
The constraint of \eqref{eq:whole_class_pb_2_body} now involves a nonparametric optimization problem for which the demand distribution only appears in the integrand. This expression allows us to reduce the functional optimization problem over the class of distributions $\mathcal{F}$ to a pointwise optimization problem. 
We now present formally this result as our first main contribution. 

Recall that $\mathcal{B} \left( \mu \right)$ denotes a Bernoulli distribution with mean $\mu$.
Our first main result is a characterization of the exact performance of any mixture of order statistics policy. 
\begin{theorem}
\label{thm:main_mixture}
Fix $\numS \ge 1$ and  $\mix \in \Pi_\numS^{OS}$. The worst-case performance of the policy  $\pi^{\bm{\lambda}}$ satisfies
\bearn
\sup_{F \in \mathcal{F}} \ratio_\numS(\pi^{\bm{\lambda} }, F)  &=& \sup_{\mu \in [0,1]} \ratio_\numS(\pi^{\bm{\lambda}}, \mathcal{B}(\mu)).
\eearn
Furthermore for every $\mu \in [0,1]$,
\begin{equation*}
 \ratio_\numS(\pi^{\bm{\lambda}}, \mathcal{B}(\mu)) = \sum_{i=1}^{\numS} \lambda_i \frac{ (1- B_{i,\numS}(1-\mu))(1 - \mu - q) + q \cdot \mu } { \min \left\{ (1-q) (1 - \mu), q \cdot \mu \right\}} - 1.
\end{equation*}

\end{theorem}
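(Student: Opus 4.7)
The plan is to leverage the integral representations of Proposition 1 to reduce the functional supremum over $\mathcal{F}$ to a pointwise supremum over $[0,1]$, and then recognize that the pointwise problem is exactly attained by Bernoulli distributions.

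First, I would apply Proposition 1 to express, for any $F \in \mathcal{F}$,
\[
\mathcal{C}(\mix, F, \numS) = (\cu+\co)\int_0^\infty g(F(y))\,dy, \qquad \opt(F) = (\cu+\co)\int_0^\infty h(F(y))\,dy,
\]
where $g(u) := \sum_{i=1}^\numS \lambda_i\bigl[(1-B_{i,\numS}(u))(u-q) + q(1-u)\bigr]$ and $h(u) := \min\{(1-q)u, q(1-u)\}$. A quick evaluation shows $g(0)=g(1)=0$ (using $B_{i,\numS}(0)=0$ and $B_{i,\numS}(1)=1$) and $h(0)=h(1)=0$. Thus the integrand of both cost and oracle cost vanishes at the two degenerate values of $F(y)$.

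Next, I would establish the upper bound $\sup_{F \in \mathcal{F}} \ratio_\numS(\mix, F) \le \sup_{u \in (0,1)} g(u)/h(u) - 1$ by a pointwise comparison. For any $y$ with $F(y) \in (0,1)$ we trivially have $g(F(y)) - h(F(y)) \le \bigl(\sup_{u\in(0,1)} g(u)/h(u) - 1\bigr)\,h(F(y))$, and for $y$ with $F(y) \in \{0,1\}$ both $g(F(y))$ and $h(F(y))$ vanish so the inequality holds trivially with both sides equal to zero. Integrating in $y$ and dividing by $\opt(F)$ (in the case $\opt(F) > 0$; the case $\opt(F)=0$ is handled by convention) yields the desired bound.

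For the matching lower bound, I would evaluate the relative regret explicitly on $\mathcal{B}(\mu)$ for $\mu \in (0,1)$. Since the CDF of $\mathcal{B}(\mu)$ equals $1-\mu$ on $[0,1)$ and $1$ on $[1,\infty)$, Proposition 1 gives
\[
\mathcal{C}(\mix, \mathcal{B}(\mu), \numS) = (\cu+\co)\,g(1-\mu), \qquad \opt(\mathcal{B}(\mu)) = (\cu+\co)\,h(1-\mu),
\]
so $\ratio_\numS(\mix, \mathcal{B}(\mu)) = g(1-\mu)/h(1-\mu) - 1$. Taking the supremum over $\mu \in (0,1)$ and changing variables to $u = 1-\mu$ recovers $\sup_{u \in (0,1)} g(u)/h(u) - 1$, and matches the upper bound. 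Substituting back the explicit formulas for $g$ and $h$ produces the closed-form displayed in the theorem statement.

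The main obstacle is really only the careful handling of the boundary values $F(y)\in\{0,1\}$ in the pointwise-to-integral step, and the analogous degenerate cases $\mu\in\{0,1\}$ which are handled by the convention that $\ratio_\numS = 0$ when $\opt = 0$; after this, the rest is direct computation and an elementary $\sup$-of-ratio-of-integrals argument, made possible by the crucial fact from Proposition 1 that the demand distribution appears only inside the integrand and not in the integrating measure.
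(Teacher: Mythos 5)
Your proof is correct and follows essentially the same route as the paper: you invoke Proposition~\ref{lem:quantile_policy_cost} to get cost and oracle cost as integrals in which $F$ appears only inside the integrand, reduce the functional supremum to the pointwise one over $u\in[0,1]$ (upper bound by pointwise comparison, lower bound by plugging in a two-point CDF), and recognize that the extremizing step CDFs are exactly the Bernoulli ones. The paper packages the same content via an explicit epigraph reformulation and a separate auxiliary lemma computing $\ratio_\numS(\OS{r},\mathcal{B}(1-\alpha))$, but the underlying reduction and the boundary-case handling are identical to yours.
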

This result has many implications.  First, it establishes the notable fact that for any data size $\numS$, the worst-case performance of mixture of order statistics policies over the \textit{entire} space of distributions ${\cal F}$  is achieved at a Bernoulli distribution. 

Second, in evaluating the worst-case performance of these policies,  \Cref{thm:main_mixture}  leads to a reduction from a non-parametric  general space of distributions  $\mathcal{F}$ to a space of distributions parametrized by a single parameter, the mean of the Bernoulli distribution. Moreover,  in the case of Bernoulli distributions, the relative regret  has a \textit{closed-form} expression.
Therefore, the \textit{exact} worst-case performance of mixture of order statistics policies can be computed for \textit{any} number of samples $\numS$ through a \textit{simple line search.}  In \Cref{sec:sub_analysis_SAA}, we analyze the implications of this result for SAA. This result will also be instrumental when we analyze optimal policies in the entire space of mappings from data to decisions in \Cref{sec:optimal}.

\paragraph{Remark.} At first glance, the result of \Cref{thm:main_mixture} may seem counter-intuitive as one would expect that in the broad family of distributions $\mathcal{F}$, a ``hard'' instance for order statistics policies would have unbounded support. This result proves that on the contrary, the difficulty of the data-driven newsvendor problem does not stem from the tail of the distribution. Indeed,  for distributions that are hard to learn, such as heavy-tail distributions, oracle costs are also large. Bernoulli distributions are flexible enough to lead to a low cost for the oracle due to the simple structure of the distribution, but also exacerbates the cost of mistakes for a decision-maker that does not know the distribution as the problem boils down to deciding between two extreme actions: prescribing $0$ or $1$.

\paragraph{Remark (Absolute vs. relative regret).}
We highlight here that a similar argument as the one developed to prove \Cref{thm:main_mixture} can be used to establish a parallel result for the worst-case expected \emph{absolute} regret metric,  $\mathcal{C}(\pi,F,\numS)-\opt(F)$. In such a case, if one restricts attention to the space of distributions supported on a  bounded interval $[0,M]$ (for some real value $M$), then one can show that the worst-case expected absolute regret for any mixture of order statistics is achieved for a two point distribution with mass at $0$ and $M$.

\subsection{Performance Analysis of SAA across Data Sizes}\label{sec:sub_analysis_SAA}

A direct and important consequence of \Cref{thm:main_mixture} is the ability to characterize the transient regime of learning, or exact performance associated with the central algorithm SAA for an \textit{arbitrary number of samples}. Since SAA is a special case of mixture of order statistics policy, the following theorem is a direct corollary.
\begin{theorem}[SAA Finite Sample Performance] \label{thm:SAA} 
For any $\numS \geq 1$, the performance of the SAA policy is given by
\bearn
\sup_{F \in \mathcal{F}} \ratio_\numS( \SAA, F)  &=&  \sup_{\mu \in [0,1]} \ratio_\numS(\SAA, \mathcal{B}(\mu)) = \sup_{\mu \in [0,1]} \frac{ (1- B_{\ceil{q \numS} ,\numS}(1-\mu))(1 - \mu - q) + q \cdot \mu } { \min \left\{ (1-q) (1 - \mu), q \cdot \mu \right\}} - 1.
\eearn
\end{theorem}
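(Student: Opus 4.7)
The plan is to observe that Theorem \ref{thm:SAA} is an immediate corollary of Theorem \ref{thm:main_mixture}, obtained by specializing the mixture weight vector to a single coordinate. From equation \eqref{eq:SAA_OS}, the SAA policy deterministically prescribes the $\ceil{q\numS}$-th order statistic $D_{\ceil{q\numS}:\numS}$. In the notation of Definition \ref{def:OS}, this is precisely $\pi^{\bm{\lambda}}$ with $\bm{\lambda} = e_{\ceil{q\numS}} \in \simplex$, i.e., the standard basis vector concentrated at the index $\ceil{q\numS}$. In particular, $\SAA \in \Pi_\numS^{OS}$, so the hypotheses of Theorem \ref{thm:main_mixture} are satisfied.

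Next I would substitute $\bm{\lambda} = e_{\ceil{q\numS}}$ into the two conclusions of Theorem \ref{thm:main_mixture}. The reduction to a one-parameter Bernoulli family, $\sup_{F \in \mathcal{F}} \ratio_\numS(\SAA, F) = \sup_{\mu \in [0,1]} \ratio_\numS(\SAA, \mathcal{B}(\mu))$, follows directly. The closed-form expression for $\ratio_\numS(\pi^{\bm{\lambda}}, \mathcal{B}(\mu))$ given in that theorem then collapses: every term in the sum $\sum_{i=1}^{\numS} \lambda_i(\cdot)$ vanishes except the $\ceil{q\numS}$-th, which leaves exactly the numerator $(1 - B_{\ceil{q\numS},\numS}(1-\mu))(1-\mu-q) + q\mu$ over the denominator $\min\{(1-q)(1-\mu), q\mu\}$, minus one. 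Taking the supremum over $\mu \in [0,1]$ yields the stated formula.

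Because all the structural work, in particular the reduction from the nonparametric class $\mathcal{F}$ to Bernoulli distributions, is done upstream in Theorem \ref{thm:main_mixture}, there is no real obstacle here: the argument amounts to identifying SAA as a degenerate mixture of order statistics and reading off the specialization. The only minor point worth flagging concerns the boundary values $\mu \in \{0, 1\}$: at those points $\mathcal{B}(\mu)$ is a point mass, so $\opt(\mathcal{B}(\mu)) = 0$, and since SAA's prescription coincides with that same point mass, $\mathcal{C}(\SAA, \mathcal{B}(\mu), \numS) = 0$ as well. By the convention stated in the problem formulation, the relative regret is then defined to equal $0$ at $\mu \in \{0,1\}$, so the supremum is attained in the open interval $(0,1)$ where the closed-form expression is unambiguous.
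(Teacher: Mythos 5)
Your proposal is correct and follows exactly the paper's approach: the paper itself presents Theorem~\ref{thm:SAA} as a direct corollary of Theorem~\ref{thm:main_mixture}, obtained by taking $\bm{\lambda}$ to be the indicator vector concentrated on index $\ceil{q\numS}$, which is precisely your specialization. Your extra remark about the boundary cases $\mu \in \{0,1\}$ is a sound sanity check (consistent with the paper's stated convention for $\opt(F)=0$), though the paper leaves it implicit.
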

\Cref{thm:SAA} leads to the notable result that one may  characterize \textit{exactly} the worst-case performance of the central SAA algorithm across \textit{all data sizes} by performing a simple line search!  
As such, it allows to exactly measure the implications of all possible \textit{out-of-sample} ``mistakes'' (compared to the oracle) made by SAA, and this is for any  data size. Next, we analyze the implications of this result on the value of data, compare this result to  earlier bounds in the literature, as well as uncover novel insights on the quality of SAA as a data-driven policy in this class of problems.

\subsubsection{Performance of SAA and Comparison to Existing Related Results}\label{sec:comp}
As mentioned earlier, SAA has been widely studied in various settings. In the context of the newsvendor problem,   \cite{levi2007approximation}  establish bounds relying on large deviations arguments to derive probabilistic results, which were later improved in \cite{levi2}, with associated relative regret guarantees. More formally, \cite[Theorem 2]{levi2} show that for any $\epsilon > 0$, any $\numS \geq 1$ and any demand distribution $F$, the relative-regret of SAA satisfies
\begin{equation*}
\label{eq:levi2}
\mathbb{P}_{\SAA} \left( \frac{c_F(x) - \opt(F)}{\opt(F)} > \epsilon \right)  \leq 2 \text{exp} \left (-\frac{\numS \epsilon^2}{18 + 8 \epsilon} \min(q,(1-q)) \right),
\end{equation*}
with the associated bound on relative regret given by
\bearn
 \sup_{F \in \mathcal{F}} \ratio_{\numS} \left( \SAA, F \right)  &=& \sup_{F \in \mathcal{F}} \mathbb{E}_{\SAA} \left[ \frac{c_F(x) - \opt(F)}{\opt(F)}  \right] \\
 &=& \sup_{F \in \mathcal{F}} \int_0^\infty \mathbb{P}_{\SAA} \left( \frac{c_F(x) - \opt(F)}{\opt(F)} > \epsilon \right)  d \epsilon \\
 &\le&  \int_0^\infty 2 \text{exp} \left (-\frac{\numS \epsilon^2}{18 + 8 \epsilon} \min(q,(1-q)) \right) d\epsilon =: U(\numS).
\eearn
The function $U(\numS)$ represents state of the art instance-independent bounds for the worst-case performance of SAA as a function of the data size in the literature to date. Our result in \Cref{thm:SAA}  allows to characterize $\sup_{F \in \mathcal{F}} \ratio_{\numS} \left( \SAA, F \right) $,  the actual worst-case performance of SAA. In \Cref{tab:complete_comparison}, we present a comparison of the number of samples required to guarantee various levels of relative regret ($25\%$, $20\%$,...,$5\%$) for different values of the critical fractile. We do so using the induced number  based on \Cref{thm:SAA} in this paper, and based on $U(\numS)$. Formally, for a given performance threshold $\tau \geq 0$, we define
\begin{eqnarray*}
N^{\text{exact-SAA}} (\tau) &:=& \min \left\{ m \geq 1 \, \Big \vert \, \forall \numS \geq m, \: \sup_{F \in \mathcal{F}} \ratio_{\numS}\left( \SAA,F \right) \leq \tau   \right\}\\
N^{\text{UB}} (\tau) &:=& \min \left\{ m \geq 1 \, \Big \vert \, \forall \numS \geq m, \: U(\numS) \leq \tau   \right\}.
\end{eqnarray*}

\begin{table}[h!]
\centering
\begin{tabular}{c l   c c c c c }
&& \multicolumn{5}{c}{Expected relative regret target ($\tau$)}\\
\cline{3-7}
q&Bound used & 25\% & 20\% & 15\% & 10\% & 5\%  \\ 
 \hline
 \hline
 \vspace{-4mm}
 \\
 \multirow{3}{*}{.7}&  $N^{\text{UB}}(\tau) $ (best known to date)   &  1,696&  2,594&  4,510& 9,921& 38,779\\
&$N^{\text{exact-SAA}}(\tau)$ (this paper) &  8&  11&  15& 31&  84  \\
\hline
 \vspace{-4mm}
 \\
 \multirow{3}{*}{.8}& $N^{\text{UB}}(\tau)$   &  2,544&  3,890&  6,765& 14,881& 58,168\\
& $N^{\text{exact-SAA}}(\tau)$ &  11&  16&  21& 41&  116  \\
\hline
 \vspace{-4mm}
 \\
\multirow{3}{*}{.9}& $N^{\text{UB}}(\tau)$    &  5,088&  7,780&  13,530& 29,762& 100,000+\\
& $N^{\text{exact-SAA}}(\tau)$ &  21&  23&  42& 71&  210  \\
\end{tabular}
\caption{\textbf{Number of samples ensuring that SAA achieves a target relative regret.}  The table reports  induced number of samples needed to reach a relative regret accuracy level, comparing the best instance-independent known bounds to date $U(\numS)$ \citep{levi2} and the exact worst-case analysis of SAA developed in \Cref{thm:SAA}, for different values of the critical fractile $q$.}
\label{tab:complete_comparison}
\end{table}

Notably, the exact analysis developed in the present paper yields a number of samples two orders of magnitude lower than the best known guarantee to date.  
The improvements above stem from the novel type of analysis conducted that enables to quantify the implications of  all out-of-sample ``mistakes''  (compared to the oracle) that SAA could do, compared to the existing approaches for SAA analysis that are anchored around large deviations bounds to ensure near-optimality of the SAA solution. 

Another fundamental  insight from \Cref{tab:complete_comparison} stems from the actual values of the minimum number of samples  $N^{\text{exact}}$ to ensure a particular relative regret level. For example, less than 71 samples are sufficient to achieve a relative regret  of $10\%$ for the various critical fractiles above! \Cref{thm:SAA} and the associated bounds enable to develop a new understanding of the value of data sizes, highlighting that smaller data sizes are extremely valuable and lead to very effective decisions for this class of problems.   In practice, even in a data-rich environments such as online retail, the time granularity used to evaluate the demand is usually at a weekly level. As a consequence, a year of demand data for a single product may only represent tens of samples. The above table highlights that such data sizes already ensure very strong performance.

\subsubsection{Transient Regime of Learning for  SAA and Non-Monotonicity}\label{sec:non_monotone}

In  \Cref{fig:exact_SAA}, we depict the exact worst-case performance of $\SAA$ for sample sizes ranging from $2$ to $100$, with a critical fractile of $q$ in $\{ 0.7, 0.8, 0.9\}$. We emphasize  that the performance depicted is the \textit{exact worst-case} relative regret of SAA and not a bound on it.  Various observations are striking.

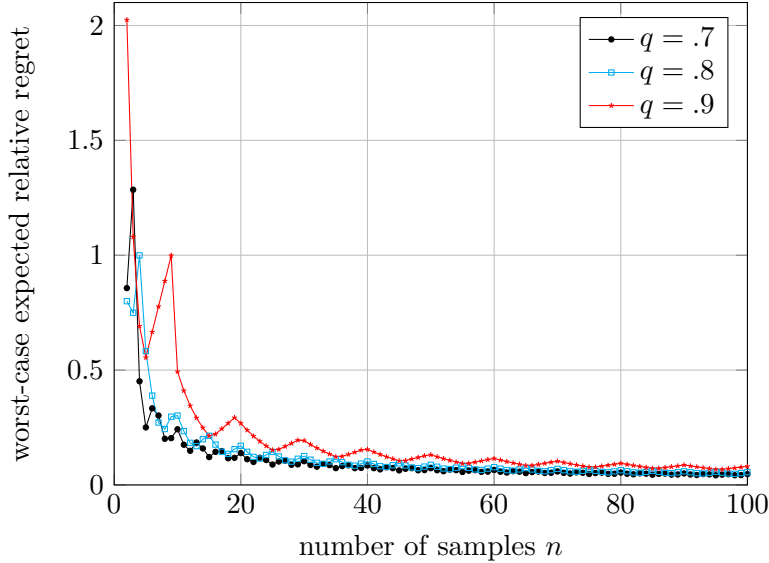
\begin{figure}[h!]
\begin{center}
\begin{tikzpicture}
\begin{axis}[
            title={},
            xmin=0,xmax=100,
            ymin=0.0,ymax=2.1,
            width=10cm,
            height=8cm,
            table/col sep=comma,
            xlabel = number of samples $\numS$,
            ylabel = worst-case expected relative regret,
            grid=both,
            skip coords between index={0}{1},
            legend pos=north east]      
\addplot [black,mark=*,mark options={scale=.5}] table[x=n,y=SAA] {Data/SAA_and_opt_paper_07.csv};
    \addlegendentry{$q=.7$}
\addplot [cyan,mark=square,mark options={scale=.5}] table[x=n,y=SAA] {Data/SAA_and_opt_paper_08.csv};
    \addlegendentry{$q=.8$}
\addplot [red,mark=star,mark options={scale=.5}] table[x=n,y=SAA] {Data/SAA_and_opt_paper_09.csv};
    \addlegendentry{$q=.9$}
\end{axis}
\end{tikzpicture}
\end{center}
\caption{\textbf{SAA performance}. The figure depicts the performance of the SAA policy as a function of the number of samples $\numS$ for different critical fractiles.  }\label{fig:exact_SAA}
\end{figure}

First, we observe  that the relative regret decays sharply even after observing very few samples $\numS$.
Consider the case where $q = .9$. With 10 samples, SAA is guaranteed to achieve a relative regret of $ 49.3\% $, with 20 samples it achieves $26.8 \%$ and with 100 samples, $8.1 \%$.  It highlights again the impressive guarantees that  SAA yields for the newsvendor problem even when the number of samples  is small. It also shows how good SAA is at capturing information relevant to the underlying optimization problem. Indeed, one would not expect such a quick decay when trying to estimate the entire demand distribution. This in turn leads to a new understanding of the transient regime of learning and the performance possibilities across data sizes, small or large.

Another highly notable observation in  \Cref{fig:exact_SAA} is that the worst-case performance of SAA is non-monotone in the number of samples $\numS$. The performance curve admits various peaks. We emphasize that the peaks observed are not due to stochasticity when evaluating the performance of the policy but represent an actual deterioration of the performance in the worst-case for SAA when adding a sample. This result can seem counter-intuitive and  establishes two  notable facts: i) SAA is  a suboptimal data-driven policy for various sample sizes; and ii) furthermore, more data is not synonymous with better worst-case performance when using SAA.
 
\paragraph{Remark (Non-monotonicity).} \label{R2-non-mono} Note that above, when considering  the the worst-case relative regret, the worst-case distribution can change with the data size $\numS$. Another question could  consist in assessing whether there exists a \emph{fixed} distribution $F$ and a data size $\numS$ such that the performance deteriorates from $\numS$ samples to $\numS+1$ samples (from the same distribution $F$). In a few problem classes, examples have been exhibited  such as pricing, from one to two samples \citep{babaioff2018two}  and misspecified linear regression \citep{loog2019minimizers}. We next argue that the non-monotonicity observed in the worst-case performance in \Cref{fig:exact_SAA} is actually a stronger statement as it implies that there exists a distribution $F \in \mathcal{F}$ and a data size $\numS$ such that, 
\begin{equation*}
\ratio_\numS(\SAA, F) < \ratio_{\numS+1}(\SAA, F).
\label{eq:fixed_dist_non_monotone}
\end{equation*}
Indeed, let $\numS$ be such that the worst-case relative regret is increasing by adding an additional sample to $\numS$. In other words, we have  
\begin{equation*}
\delta := \sup_{F \in \mathcal{F}}  \ratio_{\numS+1}(\SAA, F) - \sup_{F \in \mathcal{F}} \ratio_\numS(\SAA, F) > 0.
\end{equation*}
Then, consider a distribution $F^* \in \mathcal{F}$ such that $\ratio_{\numS+1}(\SAA, F^*) > \sup_{F \in \mathcal{F} }{\ratio_{\numS+1}(\SAA, F)} - \delta$. We have
\begin{equation*}
\ratio_\numS(\SAA, F^*) \leq \sup_{F \in \mathcal{F}} \ratio_\numS(\SAA, F)  = \sup_{F \in \mathcal{F}}  \ratio_{\numS+1}(\SAA, F) - \delta  < \ratio_{\numS+1}(\SAA, F^*).
\end{equation*} 
This shows that the worst-case non-monotonicity of the relative regret implies the existence of an instance $F^*$ for which the relative regret is non-monotonic.

In \Cref{sec:optimal}, we explore some intuition underlying this shortcoming of SAA, but also characterize an optimal data-driven algorithm.

%%%%%%%%%%%%%%%%%%%%%%%%%%%%%%%%%%%%%%%%%%%
%%%%%%%%%%%%%%%%%%%%%%%%%%%%%%%%%%%%%%%%%%%

\section{Optimal Data-Driven Policy} \label{sec:optimal}

While SAA is a natural and widely used data-driven policy, we observed in \Cref{fig:exact_SAA} that the performance of SAA is not monotonically decreasing as a function of the number of samples $\numS$, implying that it is suboptimal from a minimax perspective.  Therefore a natural question is how to improve upon SAA and more generally if it is possible to characterize an optimal data-driven policy in the general space of mappings from data to decisions. Recall that we refer to the optimal policy as the one that solves the minimax optimization problem defined in \eqref{eq:minmax}.    In this section, we investigate the minimax relative regret  $\ratio^{*}_\numS$ presented in equation \eqref{eq:minmax}  and associated optimal policies.
Compared to solving the worst-case distribution for a particular algorithm, solving \eqref{eq:minmax} now involves two non-parametric and infinite dimensional optimization problems.

We approach the problem as follows. We first establish a fundamental reduction in the space of policies and show that one can restrict attention to mixture of order statistics policies (introduced in \Cref{def:OS}), without loss of optimality. In this class,  we leverage the structure of the problem \eqref{eq:ratio} for mixture of order statistics that we established in \Cref{sec:SAA} and we derive a necessary condition that a mixture of order statistics policy needs to satisfy to be optimal in this subclass. We then show that it is possible to construct a ``simple'' policy that satisfies this necessary condition. This policy is our candidate optimal policy. The worst-case performance of this policy naturally leads to an upper bound on  $\ratio_{\numS}^{*}$. To establish that this policy is actually optimal \textit{in the entire class of data-driven policies} $\Pi_\numS$, we introduce an alternative minimax problem which is equal to $\ratio_{\numS}^{*}$ and in which we extend the space of strategies that nature may take, to randomized ones. For this minimax problem, we construct a candidate prior over the space of distributions and show that the candidate policy above, together with the candidate prior, form a saddle point. This yields  the optimality of the candidate policy but also a characterization of its performance.

\subsection{Space Reduction from Arbitrary Mappings to Order Statistics} \label{sec:ub}
We first reduce the minimax optimization problem \eqref{eq:minmax} involving two non-parametric infinite dimensional optimization problems to a minimax problem over two finite dimensional spaces. 

Our next result shows that \eqref{eq:minmax} is equivalent to an optimization problem over the space of mixture of order statistic policies which has a much simpler structure than the general set of mappings from data to decisions.
More formally, we show the following.
\begin{theorem}
\label{thm:minimax_reduction}
For any $\numS \geq 1$,
\begin{equation*}
\inf_{\pi \in \Pi_\numS} \sup_{F \in \mathcal{F}} \ratio_{\numS} \left( \pi, F \right) =  \inf_{\mix \in \setOS} \sup_{F \in \mathcal{F}} \ratio_{\numS} \left( \mix, F \right).
\end{equation*}
\end{theorem}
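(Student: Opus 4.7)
The strategy is to prove the nontrivial inequality $\inf_{\pi \in \Pi_\numS} \sup_F \ratio_\numS(\pi, F) \geq \inf_{\mix \in \setOS} \sup_F \ratio_\numS(\mix, F)$ (the reverse follows from $\setOS \subset \Pi_\numS$) by exhibiting, for every $\pi \in \Pi_\numS$, a mixture $\mix$ with no larger worst-case relative regret. The first step is symmetrization: since $D_1, \ldots, D_\numS$ are i.i.d.\ under any $F$, the symmetrized policy $\pi^{\mathrm{sym}}$ that applies $\pi$ after a uniformly random permutation of $\mathbf{D}_1^\numS$ satisfies $\mathcal{C}(\pi^{\mathrm{sym}}, F, \numS) = \mathcal{C}(\pi, F, \numS)$ for every $F$ (by Fubini and exchangeability), so the two policies share the same worst-case regret. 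I may therefore assume $\pi$ depends on the data only through the sorted vector $(D_{1:\numS}, \ldots, D_{\numS:\numS})$.

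\textbf{Reduction to a Bernoulli game.} By \Cref{thm:main_mixture}, $\sup_F \ratio_\numS(\mix, F) = \sup_{\mu \in [0,1]} \ratio_\numS(\mix, \mathcal{B}(\mu))$ for every $\mix \in \setOS$, so the RHS of the claim equals $\inf_{\mix \in \setOS} \sup_\mu \ratio_\numS(\mix, \mathcal{B}(\mu))$. Combined with the trivial bound $\sup_F \ratio_\numS(\pi, F) \geq \sup_\mu \ratio_\numS(\pi, \mathcal{B}(\mu))$, it suffices to show that for every symmetric $\pi$, some $\mix \in \setOS$ achieves $\sup_\mu \ratio_\numS(\mix, \mathcal{B}(\mu)) \leq \sup_\mu \ratio_\numS(\pi, \mathcal{B}(\mu))$. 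The problem thereby collapses to a one-dimensional game over $\mu \in [0,1]$.

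\textbf{Matching on Bernoulli inputs.} On Bernoulli samples, each coordinate is in $\{0,1\}$, so a symmetric $\pi$'s output $G_k$ depends only on the number of ones $k$. Since $c_{\mathcal{B}(\mu)}(x)$ is linear on $[0,1]$ and increasing for $x \geq 1$, I can replace $G_k$ by a distribution on $\{0,1\}$ with appropriately adjusted mass, without increasing the cost under any $\mathcal{B}(\mu)$. The symmetric policy is then encoded by $(q_0, \ldots, q_\numS) \in [0,1]^{\numS+1}$, where $q_k$ is the probability of prescribing $1$ given $k$ ones. Mixtures of order statistics correspond precisely to the subclass satisfying $q_0 = 0$, $q_\numS = 1$, and $q_k$ non-decreasing, via $\lambda_{\numS-k+1} = q_k - q_{k-1}$. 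Analyzing the blow-up of $\ratio_\numS(\pi, \mathcal{B}(\mu))$ as $\mu \to 0^+$ or $\mu \to 1^-$ forces $q_0 = 0$ and $q_\numS = 1$ whenever the worst-case regret is finite; a monotone rearrangement of the remaining coordinates then produces a non-decreasing $\tilde q_k$ whose worst-case regret is no larger, yielding the required $\mix$.

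\textbf{Main obstacle.} The delicate step is the monotone rearrangement. A swap of out-of-order entries $q_{k_1} > q_{k_2}$ (with $k_1 < k_2$) alters $Q(\mu) := \sum_k q_k b_{k,\numS}(\mu)$ by $(q_{k_1} - q_{k_2})\bigl[b_{k_2,\numS}(\mu) - b_{k_1,\numS}(\mu)\bigr]$, which crosses zero at a point $\mu^{*}$ depending on $(k_1, k_2)$ but not on the cost parameters; this prevents pointwise domination of $Q$, so one must argue worst-case domination across the two regret regimes $\mu \leq 1-q$ and $\mu > 1-q$ separately. A cleaner alternative would be to invoke a minimax theorem on the bilinear-in-$Q$ game over $(q_0, \ldots, q_\numS) \times [0,1]$ and read off from the saddle-point/KKT structure that any optimal $(q_k)$ must be non-decreasing with $q_0 = 0$ and $q_\numS = 1$, which directly identifies it with an element of $\setOS$.
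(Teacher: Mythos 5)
Your overall plan matches the paper's almost step for step: reduce via Theorem~\ref{thm:main_mixture} to a game against Bernoulli distributions, observe that on $\{0,1\}$-valued data a symmetric policy is encoded by a vector $(q_0,\ldots,q_\numS)\in[0,1]^{\numS+1}$ of prescription probabilities indexed by the number of ones, note that mixtures of order statistics correspond exactly to the non-decreasing such vectors with $q_0=0$ and $q_\numS=1$, and then argue that non-decreasing vectors suffice. But you yourself flag the last step --- the monotone rearrangement --- as the obstacle and leave it unresolved, so the argument has a genuine gap precisely where the difficulty lies.

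The fix you contemplate (transposing two out-of-order entries $q_{k_1}>q_{k_2}$ at possibly non-adjacent indices $k_1<k_2$) does indeed fail, for the reason you suspect, and the minimax/KKT alternative is only gestured at. The paper's operation is different and is worth internalizing: when two \emph{consecutive} entries are out of order, $e_j>e_{j+1}$, it replaces \emph{both} with the same value $f_j=f_{j+1}=\bigl(\tfrac{q}{\numS-j}e_j+\tfrac{1-q}{j}e_{j+1}\bigr)/\bigl(\tfrac{q}{\numS-j}+\tfrac{1-q}{j}\bigr)$, a weighted average whose weights depend on $q$, $j$, and $\numS$. This choice is engineered so that the cost difference against $\mathcal{B}(\mu)$ factors as $C_\mu\cdot(1-\mu-q)\cdot\mathcal{L}(\mu)$ with $\mathcal{L}$ affine, $\mathcal{L}(0)\ge 0$, $\mathcal{L}(1)\le 0$, and (this is precisely what the weights buy) $\mathcal{L}(1-q)=0$; the two sign changes are forced to occur at the same $\mu=1-q$, so the product is nonnegative for \emph{every} $\mu\in[0,1]$. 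That is exactly the pointwise cost domination your transposition could not deliver. Iterating this pooling step over adjacent violations yields a non-decreasing vector with weakly smaller cost against every Bernoulli distribution, hence a mixture-of-order-statistics policy with no larger worst-case regret --- this is the step your proof still needs.
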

\Cref{thm:minimax_reduction} enables a crucial space reduction of the policy space. In particular, it allows us to reduce our optimization problem to the space of mixture of order statistics policies which is parametrized by the $\numS$-dimensional vector of probabilities $\bm{\lambda}$. A notable step in the proof of the theorem consists in showing that,
\begin{equation}
\label{eq:MOS_opt_for_bern}
\inf_{\pi \in \Pi_\numS} \sup_{\mu \in [0,1]} \ratio_{\numS} \left( \pi, \mathcal{B} \left(\mu \right) \right) =  \inf_{\mix \in \setOS} \sup_{\mu \in [0,1]} \ratio_{\numS} \left( \mix, \mathcal{B} \left(\mu \right) \right).
\end{equation}
This equation complements \Cref{thm:main_mixture} which states that Bernoulli distributions are the worst-case distribution against mixture of order statistic policies. On the other hand, \eqref{eq:MOS_opt_for_bern} implies that mixture of order statistics policies are the best data-driven policies when facing a Bernoulli distribution. This result is established through a series of reductions, without loss of optimality,  in the space of policies. We first show that against Bernoulli distributions, one may restrict attention to policies prescribing inventory in the support of the historical demands. Second, we show that one may restrict attention to policies that prescribe identical inventory conditional on the number of ones observed. Third, we show that one may restrict attention  to policies that prescribe a monotonically increasing inventory as the number of ones observed
grows. We finally show that for any policy in the latter class, there exists a mixture of order statistics policy incurring a (weakly) lower cost.

Moreover, by leveraging the characterization of worst-case performance for mixture of order statistics policies derived in \Cref{thm:minimax_reduction}, we obtain that
 \begin{align*}
 \ratio_\numS^{*} =  \inf_{ \pi \in \Pi_{\numS} }  \sup_{F \in \mathcal{F}} \ratio_{\numS} \left( \pi, F\right) \stackrel{(a)}{=} \inf_{ \mix \in \Pi_\numS^{OS}}  \sup_{F \in \mathcal{F}} \ratio_{\numS} \left( \pi^{\bm{\lambda}}, F\right) \stackrel{(b)}{=} \inf_{ \mix \in \Pi_\numS^{OS}}  \sup_{\mu \in [0,1]} \ratio_{\numS} \left( \pi^{\bm{\lambda}}, \mathcal{B} \left(  \mu\right) \right).
 \end{align*}
where $(a)$ holds by \Cref{thm:minimax_reduction} and $(b)$ follows from \Cref{thm:main_mixture}.

This implies that Problem \eqref{eq:minmax} is equivalent to the following problem
\begin{equation}
 \inf_{ \mix \in \Pi_\numS^{OS}}  \sup_{\mu \in [0,1]} \ratio_{\numS} \left( \pi^{\bm{\lambda}}, \mathcal{B} \left(  \mu\right) \right) \label{eq:problem_ub},
\end{equation}
which now involves optimization over two finite dimensional spaces. 
In \Cref{sec:candidate_opt} we construct a candidate optimal policy for \eqref{eq:problem_ub}.
 
 \vspace{-2mm}
\subsection{Candidate Policy for Optimality}\label{sec:candidate_opt}
In general, prescribing a single order statistic policy can be suboptimal. However, there are particular cases in which extremal policies (either prescribing the minimum sample or the maximum one) achieve optimality.
 We first describe degenerate cases in which extremal order statistics are optimal.
\begin{proposition}
\label{prop:degen_opt}
For every $\numS$,
\begin{enumerate}
\item If $\sup_{\mu \in [0,1-q]} \ratio_{\numS} \left(\OS{1},\mathcal{B} \left( \mu \right) \right) > \sup_{\mu \in [1-q,1]} \ratio_{\numS} \left(\OS{1},\mathcal{B} \left( \mu \right) \right)$, then $\OS{1}$ is optimal for Problem \eqref{eq:minmax}.
\item If $\sup_{\mu \in [0,1-q]} \ratio_{\numS} \left(\OS{\numS},\mathcal{B} \left( \mu \right) \right) < \sup_{\mu \in [1-q,1]} \ratio_{\numS} \left(\OS{\numS},\mathcal{B} \left( \mu \right) \right)$, then $\OS{\numS}$ is optimal for Problem \eqref{eq:minmax}.
\end{enumerate}
\end{proposition}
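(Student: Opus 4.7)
The plan is to combine the two main reductions already proved, namely \Cref{thm:minimax_reduction} (restriction to mixtures of order statistics without loss of optimality) and \Cref{thm:main_mixture} (worst-case distribution for any mixture of order statistics is a Bernoulli), so that the minimax problem collapses to
$$\ratio_\numS^* = \inf_{\mix \in \setOS} \sup_{\mu \in [0,1]} \ratio_\numS(\mix, \mathcal{B}(\mu)).$$
It then suffices to prove a \emph{pointwise} domination result: on the sub-interval $[0,1-q]$, $\OS{1}$ achieves the smallest cost among all mixtures of order statistics, and on $[1-q,1]$, $\OS{\numS}$ achieves the smallest cost. Given such a pointwise domination, each hypothesis in the proposition then immediately implies that the corresponding extremal policy's overall worst case cannot be beaten by any other mixture.

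To establish the pointwise domination, I would exploit that on $\mathcal{B}(\mu)$ the prescribed inventory lies in $\{0,1\}$ and only the probability of prescribing $1$ matters. For a mixture $\mix$, this probability equals $p(\bm{\lambda},\mu) = \sum_{i=1}^\numS \lambda_i (1 - B_{i,\numS}(1-\mu))$. Since $B_{i,\numS}(y) = \Pr(\mathrm{Bin}(\numS, y) \ge i)$ is non-increasing in $i$, the coefficients $1 - B_{i,\numS}(1-\mu)$ are non-decreasing in $i$, so $p(\bm{\lambda},\mu)$ is minimized over $\Delta_\numS$ at $\bm{\lambda} = \mathbf{e}_1$ and maximized at $\bm{\lambda} = \mathbf{e}_\numS$. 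The expected cost on $\mathcal{B}(\mu)$ decomposes as $p \cdot \co(1-\mu) + (1-p)\cdot \cu \mu$, which is increasing in $p$ when $\co(1-\mu) > \cu \mu$ (equivalently $\mu < 1-q$) and decreasing in $p$ otherwise. Dividing by $\opt(\mathcal{B}(\mu))$, which does not depend on $\bm{\lambda}$, I obtain the two pointwise inequalities
$$\ratio_\numS(\mix, \mathcal{B}(\mu)) \ge \ratio_\numS(\OS{1}, \mathcal{B}(\mu)) \text{ for } \mu \in [0, 1-q], \quad \ratio_\numS(\mix, \mathcal{B}(\mu)) \ge \ratio_\numS(\OS{\numS}, \mathcal{B}(\mu)) \text{ for } \mu \in [1-q, 1].$$

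I would then finish case 1 as follows: under the stated hypothesis, the worst-case Bernoulli for $\OS{1}$ lies in $[0,1-q]$, so for any mixture $\mix$,
$$\sup_{\mu \in [0,1]} \ratio_\numS(\mix, \mathcal{B}(\mu)) \ge \sup_{\mu \in [0,1-q]} \ratio_\numS(\mix, \mathcal{B}(\mu)) \ge \sup_{\mu \in [0,1-q]} \ratio_\numS(\OS{1}, \mathcal{B}(\mu)) = \sup_{\mu \in [0,1]} \ratio_\numS(\OS{1}, \mathcal{B}(\mu)),$$
which together with \Cref{thm:minimax_reduction} and \Cref{thm:main_mixture} yields optimality of $\OS{1}$ in $\Pi_\numS$. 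Case 2 is proved by the same argument on the other sub-interval, with $\OS{\numS}$ in place of $\OS{1}$.

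I do not expect any serious obstacle. The only slightly delicate step is confirming the monotonicity in $i$ of $1-B_{i,\numS}(1-\mu)$ uniformly in $\mu$, but this is immediate from the binomial-tail interpretation of the Bernstein tail sum. Everything else is a clean reorganization of the reductions already in place earlier in the paper.
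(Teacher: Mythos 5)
Your proposal is correct and follows essentially the same approach as the paper: reduce to mixtures of order statistics against Bernoulli distributions, establish that $\OS{1}$ pointwise dominates all mixtures on $[0,1-q]$ (and $\OS{\numS}$ on $[1-q,1]$), and then use the hypothesis to locate $\OS{1}$'s worst case in $[0,1-q]$. The only cosmetic difference is that you derive the pointwise domination directly from the ``probability of prescribing $1$'' decomposition, whereas the paper packages the same monotonicity fact as \Cref{lem:monotonic_phi} via a Bernstein-polynomial telescoping identity.
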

This result implies that the optimal performance is obtained by extremal order statistics under some particular conditions. Note that these two conditions cannot hold simultaneously (we formally discuss this in \Cref{lem:single_cond} in \Cref{app:add_aux}). We highlight here that the conditions of \Cref {prop:degen_opt} do not hold for all data sizes. As a matter of fact,  we formally show in \Cref{lem:crossing}, stated and proved in \Cref{app:add_aux}, that these do not hold for any $\numS \geq \frac{2}{\min \left(q,1-q \right)^2}$. Next, we analyze the structure of optimal policies when the conditions do not hold. To that effect, we introduce the following assumption. 
\begin{assumption} \label{ass:non-deg} We say that a data size $\numS$ is non-degenerate if  the following two conditions on the performance of extremal order statistics policies hold
\begin{eqnarray}
\sup_{\mu \in [0,1-q]} \ratio_{\numS} \left(\OS{1},\mathcal{B} \left( \mu \right) \right) &\leq&\sup_{\mu \in [1-q,1]} \ratio_{\numS} \left(\OS{1},\mathcal{B} \left( \mu \right) \right) \label{eq:extreme_1} \\
\sup_{\mu \in [0,1-q]} \ratio_{\numS} \left(\OS{\numS},\mathcal{B} \left( \mu \right) \right) &\geq&\sup_{\mu \in [1-q,1]} \ratio_{\numS} \left(\OS{\numS},\mathcal{B} \left( \mu \right) \right). \label{eq:extreme_n}
\end{eqnarray}
\end{assumption}

In the case in which \Cref{ass:non-deg} holds,  one may benefit from randomization.  Next, we establish a necessary condition satisfied for a mixture of order statistics policies to  solve \eqref{eq:problem_ub}.
\begin{proposition}
\label{prop:necessary}
For every $\numS$ such that \Cref{ass:non-deg} holds,  for any solution $\pi^{\lambda} \in \Pi^{OS}$ that achieves the infimum in Problem (11), the solution $\pi^{\lambda}$ must satisfy
\begin{equation}
\sup_{\mu \in [0,1-q]} \ratio_{\numS} \left(\pi^{\bm{\lambda}},\mathcal{B} \left( \mu \right) \right) = \sup_{\mu \in [1-q,1]} \ratio_{\numS} \left(\pi^{\bm{\lambda}},\mathcal{B} \left( \mu \right) \right). \label{eq:necessary_balance}
\end{equation} 
\end{proposition}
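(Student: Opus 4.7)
The plan is to argue by contradiction via a local perturbation of $\bm{\lambda}^*$ within the simplex $\simplex$. Define
\[ L(\bm{\lambda}) := \sup_{\mu \in [0,1-q]} \ratio_{\numS}(\mix, \mathcal{B}(\mu)), \qquad R(\bm{\lambda}) := \sup_{\mu \in [1-q,1]} \ratio_{\numS}(\mix, \mathcal{B}(\mu)).\]
By \Cref{thm:main_mixture} the worst-case relative regret of $\mix$ equals $\max\{L(\bm{\lambda}), R(\bm{\lambda})\}$, so Problem \eqref{eq:problem_ub} reads $\inf_{\bm{\lambda} \in \simplex} \max\{L(\bm{\lambda}), R(\bm{\lambda})\}$. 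Suppose that an optimal $\bm{\lambda}^*$ violates \eqref{eq:necessary_balance}; without loss of generality assume $L(\bm{\lambda}^*) < R(\bm{\lambda}^*)$ (the reverse case is symmetric). The goal is to construct a perturbation $\bm{\lambda}(\epsilon) \in \simplex$ that strictly lowers $R$ while keeping $L$ below $R$, thereby contradicting optimality.

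The structural fact needed is the monotonicity in $i$ of the per-order-statistic coefficients
\[ f_i(\mu) := \frac{(1-B_{i,\numS}(1-\mu))(1-\mu-q) + q\mu}{\min\{(1-q)(1-\mu), q\mu\}},\]
through which \Cref{thm:main_mixture} gives $\ratio_{\numS}(\mix, \mathcal{B}(\mu)) = \sum_i \lambda_i f_i(\mu) - 1$. Using $B_{i,\numS}(y) - B_{i+1,\numS}(y) = b_{i,\numS}(y) > 0$ for $y \in (0,1)$, one verifies that the numerator of $f_i(\mu)$ is strictly increasing in $i$ on $\mu \in (0,1-q)$ and strictly decreasing on $\mu \in (1-q,1)$; since the denominator is independent of $i$, so is $f_i$ itself. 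Consequently, shifting probability mass from any index $i_0 < \numS$ with $\lambda^*_{i_0} > 0$ to index $\numS$ strictly decreases $\sum_i \lambda_i f_i(\mu)$ at every $\mu \in (1-q,1)$ and strictly increases it at every $\mu \in (0,1-q)$.

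To invoke this direction one needs an index $i_0 < \numS$ with $\lambda^*_{i_0} > 0$, which is precisely where \Cref{ass:non-deg} enters: were $\bm{\lambda}^* = \mathbf{e}_\numS$, inequality \eqref{eq:extreme_n} would yield $L(\bm{\lambda}^*) = L(\OS{\numS}) \ge R(\OS{\numS}) = R(\bm{\lambda}^*)$, contradicting $L(\bm{\lambda}^*) < R(\bm{\lambda}^*)$. One may then take $i_0$ to be the smallest index with $\lambda^*_{i_0} > 0$ (so $i_0 = 1$ whenever $\lambda^*_1 > 0$) and set $\bm{\lambda}(\epsilon) := \bm{\lambda}^* + \epsilon(\mathbf{e}_\numS - \mathbf{e}_{i_0}) \in \simplex$ for small $\epsilon > 0$. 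Continuity of $L$ on $\simplex$ together with the strict gap $L(\bm{\lambda}^*) < R(\bm{\lambda}^*)$ ensures $L(\bm{\lambda}(\epsilon)) < R(\bm{\lambda}^*)$ for small $\epsilon$; if one also establishes $R(\bm{\lambda}(\epsilon)) < R(\bm{\lambda}^*)$, then $\max\{L,R\}(\bm{\lambda}(\epsilon)) < \max\{L,R\}(\bm{\lambda}^*)$ contradicts optimality of $\bm{\lambda}^*$.

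The main obstacle is delivering this strict decrease of $R$, since the sensitivity $f_\numS(\mu) - f_{i_0}(\mu)$ is only weakly negative on $[1-q,1]$: it vanishes at $\mu = 1-q$ (where every $f_i$ equals $1$) and can vanish at $\mu = 1$ (where the regret is zero by the convention for degenerate distributions). One therefore needs to argue that the supremum defining $R(\bm{\lambda}^*)$ is essentially attained at an interior point of $(1-q,1)$. This follows because $\ratio_{\numS}(\pi^{\bm{\lambda}^*}, \mathcal{B}(\mu))$ equals zero at both endpoints whereas $R(\bm{\lambda}^*) > 0$ (otherwise $L(\bm{\lambda}^*) < R(\bm{\lambda}^*) = 0$ would contradict non-negativity of the relative regret); hence any near-supremal $\mu$ lies in a compact subset of $(1-q,1)$ on which $f_\numS - f_{i_0}$ is uniformly bounded away from zero, and a standard compactness-and-continuity argument promotes the pointwise strict decrease to a strict decrease of the supremum $R$.
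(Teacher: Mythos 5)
Your perturbation strategy is close in spirit to the paper's, which (in your $L>R$ sub-case) mixes $\mix$ with $\OS{1}$ by a variable weight $\nu$, locates the balancing weight $\nu^*$ via the intermediate value theorem, and then obtains the strict improvement from the elementary bound $\sup_{\mu\in[0,1-q]}\bigl[\nu^*\ratio_\numS(\OS{1},\mathcal{B}(\mu))+(1-\nu^*)\ratio_\numS(\mix,\mathcal{B}(\mu))\bigr]\le\nu^*\sup\ratio_\numS(\OS{1},\cdot)+(1-\nu^*)\sup\ratio_\numS(\mix,\cdot)<\sup\ratio_\numS(\mix,\cdot)$. The paper therefore never needs to show that a small mass-shift strictly lowers a supremum. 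Your shift of an $\epsilon$ mass from $\mathbf{e}_{i_0}$ to $\mathbf{e}_\numS$ forces you to establish exactly that, and the compactness step you offer for it has a gap.

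The issue is the assertion that $\ratio_\numS(\pi^{\bm{\lambda}^*},\mathcal{B}(\mu))$ ``equals zero at both endpoints'' of $[1-q,1]$, so that near-supremal $\mu$ must lie in a compact subset of the open interval. At $\mu=1-q$ this is correct and the map is continuous there. At $\mu=1$, however, the value is $0$ only by the degenerate-distribution convention; the one-sided limit is $\lim_{\mu\to 1^-}\ratio_\numS(\pi^{\bm{\lambda}^*},\mathcal{B}(\mu))=\lambda^*_1\,q\numS/(1-q)$, since $B_{1,\numS}(\alpha)\sim\numS\alpha$ while $B_{i,\numS}(\alpha)=O(\alpha^i)$ as $\alpha=1-\mu\to 0^+$, so $\ratio_\numS(\OS{1},\mathcal{B}(1-\alpha))\to q\numS/(1-q)$ while $\ratio_\numS(\OS{i},\mathcal{B}(1-\alpha))\to 0$ for $i\ge 2$. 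That limit is strictly positive whenever $\lambda^*_1>0$, a possibility nothing established so far rules out. In that case the map is discontinuous at $\mu=1$, the supremum over $[1-q,1]$ may be approached only as $\mu\to 1^-$, no compact $K\subset(1-q,1)$ captures it, and your uniform bound on $f_{i_0}-f_\numS$ over $K$ does not control the supremum. The step can be repaired — pass to the continuous extension of the ratio on $[1-q,1]$ and observe that when $i_0=1$ the direction $f_\numS-f_{1}$ also has strictly negative limit $-q\numS/(1-q)$ as $\mu\to 1^-$ — but you do not say this, and the inference as written does not go through. (The symmetric difficulty at $\mu\to 0^+$, governed by $\lambda^*_\numS$, would appear in the $L>R$ case.) The earlier parts of your argument — the monotonicity of $f_i$, ruling out $\bm{\lambda}^*=\mathbf{e}_\numS$ via \Cref{ass:non-deg}, and the direction of the perturbation — are correct.
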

\Cref{prop:necessary} establishes a necessary condition for a mixture of order statistics policy to be optimal for \eqref{eq:problem_ub}. 
In particular, $\pi^{\bm{\lambda}}$ must balance between worst-cases among Bernoulli distributions with mean smaller than $1-q$ and ones with mean larger than $1-q$. Intuitively, if the policy does not satisfy this property, it is possible to improve it by adding mass on lower or higher order statistics.

In \Cref{sec:non_monotone}, we observed that the worst-case performance of SAA is not monotonic as the number of samples grows and deduced its suboptimality. \Cref{prop:necessary} highlights why this is the case. One can show that SAA does not satisfy \eqref{eq:necessary_balance} in general, and by not doing so enables nature to exploit the imbalance in worst cases to ``hurt'' the decision-maker.  We present in \Cref{app:subopt_SAA} a more detailed discussion about the sub-optimality of SAA.

Our next result establishes that it is possible to construct a simple mixture of order statistics policy that satisfies  \eqref{eq:necessary_balance}, and randomizes between at most two consecutive order statistics.
\begin{proposition}
\label{prop:balancing_regret}
For every $\numS$ such that \Cref{ass:non-deg} holds, there exist $k \in \{2,\ldots,\numS\}$ and $\gamma \in [0,1]$ such that the policy $\pi^{k,\gamma}$ that prescribes the order statistic  $D_{k:\numS}$ w.p $ \gamma$ and $D_{k-1:\numS}$ w.p $1 - \gamma$ satisfies  \eqref{eq:necessary_balance} i.e., there exist $ \mu^{-} \in [0,1-q]$ and $\mu^{+} \in [1-q,1]$ such that,
\begin{equation}
\label{eq:indifference_body}
\ratio_n\left(\pi^{k,\gamma}, \mathcal{B}( \mu^{-}) \right) =  \ratio_n\left(\pi^{k,\gamma}, \mathcal{B}( \mu^{+}) \right) = \sup_{\mu \in [0,1] } \ratio_n\left(\pi^{k,\gamma}, \mathcal{B}( \mu) \right) .
\end{equation}
Moreover, $k$ satisfies
\begin{eqnarray}
\sup_{\mu \in [1- q,1] } \ratio_n\left(\OS{k-1}, \mathcal{B}(\mu) \right)& \geq & \sup_{\mu \in [0,1 - q]} \ratio_n\left(\OS{k-1}, \mathcal{B}(\mu) \right)\label{eq:crossing_1}
\\
\sup_{\mu \in [1- q,1] } \ratio_n\left(\OS{k}, \mathcal{B}(\mu) \right)& \leq & \sup_{\mu \in [0,1 - q]} \ratio_n\left(\OS{k}, \mathcal{B}(\mu) \right). \label{eq:crossing_2}
\end{eqnarray}
\end{proposition}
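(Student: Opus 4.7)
The plan is to establish existence of $(k,\gamma)$ via two intermediate value arguments in sequence (one discrete, one continuous), and then separately verify attainment of the suprema in $\mu$.

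\emph{Selecting $k$.} Define, for $i \in \{1,\ldots,\numS\}$,
\[
L_i := \sup_{\mu \in [0,1-q]} \ratio_\numS(\OS{i}, \mathcal{B}(\mu)), \qquad R_i := \sup_{\mu \in [1-q,1]} \ratio_\numS(\OS{i}, \mathcal{B}(\mu)).
\]
\Cref{ass:non-deg} asserts $L_1 - R_1 \le 0 \le L_\numS - R_\numS$, so the sequence $\{L_i - R_i\}_{i=1}^\numS$ changes sign. I would set $k := \min\{i \in \{2,\ldots,\numS\} : L_i \ge R_i\}$, which is well-defined since $L_\numS \ge R_\numS$. Minimality then forces $L_{k-1} \le R_{k-1}$ while $L_k \ge R_k$ holds by construction, yielding exactly \eqref{eq:crossing_1}--\eqref{eq:crossing_2}.

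\emph{Selecting $\gamma$.} For this $k$, \Cref{thm:main_mixture} gives the decomposition
\[
\ratio_\numS(\pi^{k,\gamma}, \mathcal{B}(\mu)) \;=\; (1-\gamma)\,\ratio_\numS(\OS{k-1}, \mathcal{B}(\mu)) + \gamma\,\ratio_\numS(\OS{k}, \mathcal{B}(\mu)),
\]
which is affine in $\gamma$ for each fixed $\mu$. Define $\varphi_L(\gamma) := \sup_{\mu \in [0,1-q]} \ratio_\numS(\pi^{k,\gamma}, \mathcal{B}(\mu))$ and $\varphi_R(\gamma)$ analogously on $[1-q,1]$. As suprema of a family of affine functions of $\gamma$ with uniformly bounded slopes (since both ratios are bounded in $\mu$ on the closed interval), both $\varphi_L$ and $\varphi_R$ are convex and Lipschitz continuous on $[0,1]$. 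The endpoint inequalities from the previous step translate to $\varphi_L(0) - \varphi_R(0) = L_{k-1} - R_{k-1} \le 0$ and $\varphi_L(1) - \varphi_R(1) = L_k - R_k \ge 0$, so the intermediate value theorem applied to $\varphi_L - \varphi_R$ supplies $\gamma \in [0,1]$ with $\varphi_L(\gamma) = \varphi_R(\gamma)$. Since $\sup_{\mu \in [0,1]} \ratio_\numS(\pi^{k,\gamma}, \mathcal{B}(\mu)) = \max\{\varphi_L(\gamma), \varphi_R(\gamma)\}$, this common value equals the overall Bernoulli supremum, yielding \eqref{eq:necessary_balance}.

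\emph{Attainment of $\mu^-$, $\mu^+$.} By the closed form in \Cref{thm:main_mixture}, $\mu \mapsto \ratio_\numS(\pi^{k,\gamma}, \mathcal{B}(\mu))$ is a quotient of polynomials in $\mu$, continuous on $(0,1)$ with finite one-sided limits at $0$ and $1$; combined with the convention that sets the ratio to $0$ at $\mu \in \{0,1\}$ (applicable here since $\pi^{k,\gamma}$ incurs zero cost against $\mathcal{B}(0)$ and $\mathcal{B}(1)$), this function is bounded on each of the compact half-intervals. I would conclude attainment of both suprema at some $\mu^- \in [0,1-q]$ and $\mu^+ \in [1-q,1]$ by arguing upper semicontinuity of the regret function on each half-interval, which together with the balance condition yields the stated equalities with $\sup_{\mu \in [0,1]} \ratio_\numS(\pi^{k,\gamma}, \mathcal{B}(\mu))$.

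The main obstacle is the third step: the convention at the degenerate endpoints can, in principle, produce a downward jump relative to the one-sided limit of the regret (this occurs, e.g., for $\OS{\numS}$ near $\mu=0$ and for $\OS{1}$ near $\mu=1$), so one must verify that for the specific $(k,\gamma)$ just constructed the suprema are not attained only as boundary limits. This reduces to inspecting the leading-order behavior of the Bernstein polynomial expression $1 - B_{i,\numS}(1-\mu)$ as $\mu \to 0^+$ and $\mu \to 1^-$ for $i \in \{k-1,k\}$ and checking these boundary limits against the interior maxima; if they ever match the overall supremum, a separate argument (or a slight perturbation of $\gamma$) would be needed to produce interior attaining points. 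The $\gamma$-continuity of $\varphi_L, \varphi_R$ used in step two, by contrast, is a routine uniform Lipschitz estimate on the mixture sensitivities $\ratio_\numS(\OS{k}, \mathcal{B}(\mu)) - \ratio_\numS(\OS{k-1}, \mathcal{B}(\mu))$.
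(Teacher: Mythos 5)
Your proof reproduces the paper's argument essentially step for step. The selection of $k$ is the same discrete sign-change argument on $L_i - R_i$ (the paper simply invokes \eqref{eq:extreme_1}, \eqref{eq:extreme_n} and picks a $k$ where the inequalities flip; your minimality construction gives the same $k$). The selection of $\gamma$ is the same intermediate value argument applied to $L(\gamma) := \varphi_L(\gamma) - \varphi_R(\gamma)$; your Lipschitz/convexity justification is a mild repackaging of the continuity argument the paper establishes in the proof of \Cref{prop:necessary} and reuses here. So on those two steps the proposal is correct and on the paper's route.

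The one place you stop short is also the soft spot in the paper's own proof. The paper defines $\mu^\pm \in \argmax$ on the two half-intervals and asserts these exist ``by continuity on a compact.'' As you observe, that is not literally true: the convention $\ratio_\numS(\pi, \mathcal{B}(0)) = \ratio_\numS(\pi, \mathcal{B}(1)) = 0$ introduces a downward jump at $\mu = 0$ whenever the mixture puts mass on $\OS{\numS}$ (since $\ratio_\numS(\OS{\numS}, \mathcal{B}(\mu)) \to \numS(1-q)/q$ as $\mu \to 0^+$) and at $\mu = 1$ whenever it puts mass on $\OS{1}$ (with limit $q\numS/(1-q)$). These cases are not excluded by \eqref{eq:crossing_1}--\eqref{eq:crossing_2}: $k = \numS$ is possible (it is exactly the case in which \eqref{eq:extreme_n} is the only index with $L_i \ge R_i$) and, by the symmetric argument, so is $k = 2$. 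For $k \in \{3,\ldots,\numS-1\}$ the regret map is continuous on $[0,1]$ and both you and the paper are on solid ground; for $k \in \{2,\numS\}$ the one-line compactness claim does not apply and the additional check you describe is genuinely needed. So your proposal has the same gap as the paper's proof at this point — but you identified and named it, which the paper does not. If you want to close it, you need only treat the two edge indices separately: either show that for the balancing $\gamma$ the interior maximum strictly dominates the boundary limit, or replace the single pair $(\mu^-,\mu^+)$ by a sequence approaching the boundary and run the saddle-point argument of \Cref{prop:prior} with a sequence of priors.
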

In other words, \Cref{prop:balancing_regret} intuitively characterizes the simplest candidate optimal mixture of order statistics policy one could consider when no single order statistic policy satisfies the necessary condition \eqref{eq:necessary_balance}.

This candidate policy alleviates the imbalance of the expected relative regret incurred by single order statistic policies. Indeed, letting $k$  denote  the largest order statistic prescribed by the candidate policy, we have by \eqref{eq:crossing_2} that the worst case performance of $\OS{k}$ on Bernoulli distributions with relatively small mean supersedes the one for Bernoulli distributions with large ones. On the contrary, according to \eqref{eq:crossing_1}, this imbalance is reverted for $\OS{k-1}$.

 Based on \Cref{prop:balancing_regret}, we have a candidate policy $\optpol$ satisfying a necessary condition for optimality for Problem \eqref{eq:problem_ub}.  This policy induces an upper bound on the value of \eqref{eq:problem_ub} as we have 
\begin{equation*}
 \inf_{ \mix \in \Pi_\numS^{OS}}  \sup_{\mu \in [0,1]} \ratio_{\numS} \left( \pi^{\bm{\lambda}}, \mathcal{B} \left(  \mu\right) \right) \leq \sup_{\mu \in [0,1]} \ratio_{\numS} \left( \optpol, \mathcal{B} \left(  \mu\right) \right).
\end{equation*}

In \Cref{sec:lower_bound}, we show that the candidate policy $\optpol$ not only satisfies a necessary optimality condition for order statistic policies, but is actually optimal in this space of policies which, by \Cref{thm:minimax_reduction}, implies its optimality in the general space of data-driven policies $\Pi_{\numS}$.

%%%%%%%%%%%%%%%%%%%%%%%%%%%%%%%%%%%%%%%%%%%%%%%
\vspace{-2mm}
\subsection{Optimal Data-Driven Policy and its Performance}\label{sec:lower_bound}
After deriving a candidate optimal policy, we now show that this policy is optimal for the initial Problem \eqref{eq:minmax} by proving its optimality for \eqref{eq:problem_ub}.
Remark that for $\numS \geq 1$, Problem \eqref{eq:problem_ub} is equivalent to the following problem in which we extend the space of Bernoulli distributions to the space of distributions over Bernoulli distributions
\begin{equation}
 \inf_{ \mix \in \Pi_\numS^{OS}}  \sup_{p \in \Delta \left([0,1] \right)} \mathbb{E}_{\mu \sim p} \left[ \ratio_{\numS} \left( \pi^{\bm{\lambda}}, \mathcal{B} \left(  \mu\right) \right) \right],\label{eq:opt_mixed_strategies_body}
\end{equation}
where $\Delta \left( [0,1] \right)$ is the set of distributions supported on $[0,1]$. Furthermore, we have
\begin{equation*}
 \inf_{ \mix \in \Pi_\numS^{OS}}  \sup_{p \in \Delta \left([0,1] \right)} \mathbb{E}_{\mu \sim p} \left[ \ratio_{\numS} \left( \pi^{\bm{\lambda}}, \mathcal{B} \left(  \mu\right) \right) \right]  \geq  \sup_{p \in \Delta \left([0,1] \right)}  \inf_{ \mix \in \Pi_\numS^{OS}}  \mathbb{E}_{\mu \sim p}\left[ \ratio_{\numS} \left( \pi^{\bm{\lambda}}, \mathcal{B} \left(  \mu\right) \right) \right]. 
\end{equation*}
To derive a lower bound matching the upper bound of \Cref{sec:ub}, it is sufficient to show that there exists a prior $p^{*}$, such that the policy  $\pi^{k,\gamma}$ introduced in \Cref{prop:balancing_regret} satisfies,
\begin{eqnarray}
\label{eq:sufficient}
\inf_{ \mix \in \Pi_{\numS}^{OS} } \mathbb{E}_{\mu \sim p^{*}}\left[\ratio_{\numS} \left( \mix, \mathcal{B} \left(\mu \right) \right) \right] &=& \mathbb{E}_{\mu \sim p^{*}}\left[\ratio_{\numS} \left( \pi^{k,\gamma}, \mathcal{B} \left(\mu \right) \right) \right], \label{eq:dual_sol} \\
\mathbb{E}_{\mu \sim p^{*}}\left[\ratio_{\numS} \left( \pi^{k,\gamma}, \mathcal{B} \left(\mu \right) \right) \right]  &=& \sup_{\mu \in [0,1]} \ratio_\numS \left( \pi^{k,\gamma}, \mathcal{B} \left( \mu \right) \right). \label{eq:no_gain_info}
\end{eqnarray}

Equality \eqref{eq:dual_sol} would imply that the policy $\pi^{k,\gamma}$ presented in \Cref{prop:balancing_regret} is the best response when Nature selects prior $p^*$. Equality \eqref{eq:no_gain_info} would ensure that prior $p^{*}$ leads to the worst-case performance of $\pi^{k,\gamma}$.

Consider $ \mu^{-} \in [0,1-q]$ and $\mu^{+} \in [1-q,1]$ as introduced in \Cref{prop:balancing_regret}. 
Note that \eqref{eq:indifference_body} implies that for any prior $p_0$ supported on $\{ \mu^{-}, \mu^{+}\}$, we have
 $$
\mathbb{E}_{\mu \sim p_0}\left[\ratio_{\numS} \left( \pi^{k,\gamma}, \mathcal{B} \left( \mu \right)\right) \right] = \sup_{\mu \in [0,1]} \ratio_\numS \left( \pi^{k,\gamma}, \mathcal{B} \left( \mu \right) \right).
 $$
 It follows that \eqref{eq:no_gain_info} holds for any such prior. This motivates restricting attention to the set of priors supported on two Bernoulli distributions.
 Our next result shows that in the class of priors over two Bernoulli distributions, there exists a prior for which \eqref{eq:sufficient} holds. Formally we show the following.
\begin{proposition}
\label{prop:prior}
For any $k \in \{2,\ldots, \numS \}$, $\gamma \in [0,1]$, $\mu^{-} \in (0,1-q)$ and $\mu^{+} \in (1-q,1)$, there exists a prior $p^{*}$ on $\{ \mu^{-}, \mu^{+} \}$ such that,
\begin{equation*}
\inf_{ \mix \in \Pi_{\numS}^{OS} } \mathbb{E}_{\mu \sim p^{*}}\left[\ratio_{\numS} \left( \mix, \mathcal{B} \left(\mu \right) \right) \right] = \mathbb{E}_{\mu \sim p^{*}}\left[\ratio_{\numS} \left( \pi^{k,\gamma}, \mathcal{B} \left(\mu \right) \right) \right] .
\end{equation*}
\end{proposition}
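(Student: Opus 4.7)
The plan is to exploit the linearity of the expected relative regret in the mixing weights $\bm{\lambda}$ when the prior is supported on two atoms. For $\alpha \in [0,1]$, let $p_\alpha$ place mass $\alpha$ at $\mu^-$ and $1-\alpha$ at $\mu^+$, and define
\begin{equation*}
a_i := \ratio_\numS(\OS{i}, \mathcal{B}(\mu^-)) + 1, \qquad b_i := \ratio_\numS(\OS{i}, \mathcal{B}(\mu^+)) + 1, \qquad c_i(\alpha) := \alpha\, a_i + (1-\alpha)\, b_i.
\end{equation*}
By the closed form in \Cref{thm:main_mixture} and linearity, $\mathbb{E}_{\mu \sim p_\alpha}[\ratio_\numS(\mix, \mathcal{B}(\mu))] = \sum_{i=1}^\numS \lambda_i\, c_i(\alpha) - 1$ for every $\mix \in \setOS$. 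Thus, minimizing over $\setOS$ reduces to the linear program $\min_{\bm{\lambda} \in \simplex} \sum_i \lambda_i c_i(\alpha)$, whose optimum is attained by any distribution supported on $\argmin_i c_i(\alpha)$. Consequently, it suffices to construct $\alpha^* \in (0,1)$ such that $\{k-1, k\} \subseteq \argmin_i c_i(\alpha^*)$, since $\optpol$ assigns all its mass to these two indices.

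To identify $\alpha^*$, I would apply the Bernstein identity $B_{i-1,\numS}(y) - B_{i,\numS}(y) = b_{i-1,\numS}(y)$ to the closed form of \Cref{thm:main_mixture} to compute
\begin{equation*}
a_i - a_{i-1} = \frac{(1-\mu^--q)\, b_{i-1,\numS}(1-\mu^-)}{\min\{(1-q)(1-\mu^-),\, q\mu^-\}}, \qquad b_{i-1} - b_i = \frac{(\mu^++q-1)\, b_{i-1,\numS}(1-\mu^+)}{\min\{(1-q)(1-\mu^+),\, q\mu^+\}},
\end{equation*}
both of which are strictly positive under the assumptions $\mu^- \in (0, 1-q)$ and $\mu^+ \in (1-q, 1)$. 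Next, I would fix $\alpha^*$ by the indifference equation $c_{k-1}(\alpha^*) = c_k(\alpha^*)$, namely
\begin{equation*}
\alpha^* := \frac{b_{k-1} - b_k}{(a_k - a_{k-1}) + (b_{k-1} - b_k)} \in (0,1),
\end{equation*}
which by construction ties the indices $k-1$ and $k$ under $c_i(\alpha^*)$.

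The crux of the proof is promoting this local tie to a global minimum of $i \mapsto c_i(\alpha^*)$. Since $c_i(\alpha^*) - c_{i-1}(\alpha^*) = \alpha^*(a_i - a_{i-1}) - (1-\alpha^*)(b_{i-1} - b_i)$, the sign pattern of the increments is governed by the ratio
\begin{equation*}
\frac{b_{i-1} - b_i}{a_i - a_{i-1}} \;\propto\; \frac{b_{i-1,\numS}(1-\mu^+)}{b_{i-1,\numS}(1-\mu^-)} = \left( \frac{1-\mu^+}{1-\mu^-}\right)^{i-1} \left( \frac{\mu^+}{\mu^-}\right)^{\numS-i+1},
\end{equation*}
with a positive proportionality constant independent of $i$. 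Because $\mu^- < \mu^+$, both factors $\frac{1-\mu^+}{1-\mu^-}$ and $\frac{\mu^-}{\mu^+}$ lie in $(0,1)$, so each unit increment of $i$ strictly shrinks this ratio; this monotone-likelihood-ratio property of the Bernstein basis is the main (and only nontrivial) obstacle. Combined with the definition of $\alpha^*$, it yields $c_i(\alpha^*) - c_{i-1}(\alpha^*) < 0$ for $i \le k-1$ and $c_i(\alpha^*) - c_{i-1}(\alpha^*) > 0$ for $i \ge k+1$, whence $c_i(\alpha^*) > c_{k-1}(\alpha^*) = c_k(\alpha^*)$ for every $i \notin \{k-1, k\}$. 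Since $\optpol$ places all its mass on $\{k-1, k\}$, it attains the minimum of the LP, and setting $p^* := p_{\alpha^*}$ establishes the claimed equality.
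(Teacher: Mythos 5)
Your proof is correct, and it takes a route that is related to but structurally different from the paper's. The paper conditions on the number of ones observed in the sample, works with the posterior probabilities $\mathbb{P}(\mu = \mu^+ \mid \sum_i D_i = j)$ and with the sum-based policy representation (the conditional expected inventory $e_j$), and then establishes a separate lemma showing the sign sequence $(a_j)$ of the conditional coefficients can be made strictly decreasing and zeroed at a prescribed index. You instead stay entirely in the $\numS$-dimensional mixture-weight parameterization, observing that under any two-atom prior $p_\alpha$ the Bayesian objective $\mathbb{E}_{\mu \sim p_\alpha}[\ratio_\numS(\mix,\mathcal{B}(\mu))]$ is the linear functional $\sum_i \lambda_i c_i(\alpha) - 1$ on the simplex (this uses that $\ratio_\numS(\mix,\mathcal{B}(\mu)) = \sum_i \lambda_i \ratio_\numS(\OS{i},\mathcal{B}(\mu))$, which follows directly from \Cref{thm:main_mixture}), so the infimum is attained by putting all mass on $\argmin_i c_i(\alpha)$. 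You then calibrate $\alpha^*$ by the indifference $c_{k-1}(\alpha^*)=c_k(\alpha^*)$ and promote this local tie to a global minimum via the monotone-likelihood-ratio property of the Bernstein basis, i.e., that $b_{i-1,\numS}(1-\mu^+)/b_{i-1,\numS}(1-\mu^-) = \left(\tfrac{1-\mu^+}{1-\mu^-}\right)^{i-1}\left(\tfrac{\mu^+}{\mu^-}\right)^{\numS-i+1}$ is strictly decreasing in $i$ whenever $\mu^- < \mu^+$. Both proofs fundamentally hinge on this same MLR property (in the paper it appears as the monotonicity of the posterior $p_j^+(\delta)$ in $j$, which drives the monotonicity of $(a_j)$), but your version reads as a direct LP complementary-slackness argument and avoids the detour through the sum-based representation and the posterior-structure lemma. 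What your approach buys is brevity and a cleaner exposure of the $\alpha^*$ that works in closed form; what the paper's approach buys is a unified language with the reduction steps in \Cref{thm:minimax_reduction}, where the sum-based parameterization is already set up.
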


We are now in a position to state our next main result. The next result provides a characterization of an optimal policy and its performance. In particular, we build on \Cref{prop:prior} and on the upper bound derived in  \Cref{sec:ub}  to establish that, when \Cref{ass:non-deg} holds, an optimal data-driven policy, in the \textit{entire} space of possible mappings from data to decision, is given by a randomization over at most two consecutive order statistics of the historical demand samples (in the case where one of the conditions does not hold, we have already established that an extremal order statistic is optimal). Formally we show the following.
\vspace{-1mm}
\begin{theorem}[Optimal Data-Driven Policy]
\label{thm:optimal_min_max}
For every $\numS$ such that \Cref{ass:non-deg} holds, there exists $k \in \{2,\ldots,\numS\}$ and $\gamma \in [0,1]$ such that the policy $\pi^{k,\gamma}$ that prescribes the order statistic  $D_{k:\numS}$ w.p $ \gamma$ and $D_{k-1:\numS}$ w.p $1 - \gamma$  satisfies
\begin{equation*}
\sup_{F \in \mathcal{F}} \ratio_n\left( \pi^{k,\gamma}, F \right) = \ratio_{\numS}^{*}.
\end{equation*}
 Moreover, $k$ satisfies \eqref{eq:crossing_1} and \eqref{eq:crossing_2}.

Furthermore, if \eqref{eq:extreme_1} does not hold, $\OS{1}$ is optimal for Problem \eqref{eq:minmax}. Similarly, if \eqref{eq:extreme_n} does not hold, $\OS{\numS}$ is optimal for Problem \eqref{eq:minmax}.
\end{theorem}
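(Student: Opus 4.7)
The plan is to assemble a saddle-point argument using the structural results that have already been established, treating the degenerate and non-degenerate regimes separately. The degenerate cases are immediate: if \eqref{eq:extreme_1} fails, the first part of Proposition \ref{prop:degen_opt} yields optimality of $\OS{1}$, and symmetrically for $\OS{\numS}$ when \eqref{eq:extreme_n} fails. So the substance lies in the non-degenerate case where Assumption \ref{ass:non-deg} holds; there I would need to build both an upper and a matching lower bound on $\ratio_{\numS}^{*}$.

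For the upper bound, I would invoke Proposition \ref{prop:balancing_regret} to obtain $k$ and $\gamma$ such that $\pi^{k,\gamma}$ satisfies the indifference/balance condition \eqref{eq:indifference_body}, together with the crossing relations \eqref{eq:crossing_1}–\eqref{eq:crossing_2}. Combining Theorem \ref{thm:minimax_reduction} (space reduction to mixtures of order statistics) with Theorem \ref{thm:main_mixture} (worst case is Bernoulli) gives
\begin{equation*}
\ratio_{\numS}^{*} \;=\; \inf_{\pi^{\bm{\lambda}} \in \setOS} \sup_{\mu \in [0,1]} \ratio_{\numS}\left(\pi^{\bm{\lambda}}, \mathcal{B}(\mu)\right) \;\leq\; \sup_{\mu \in [0,1]} \ratio_{\numS}\left(\pi^{k,\gamma}, \mathcal{B}(\mu)\right),
\end{equation*}
which is the desired upper bound on the minimax value and, by Theorem \ref{thm:main_mixture} applied to $\pi^{k,\gamma}$, also equals $\sup_{F \in \mathcal{F}} \ratio_{\numS}(\pi^{k,\gamma}, F)$.

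For the matching lower bound, I would lift the inner problem from Bernoulli means to mixed strategies over them, as foreshadowed in \eqref{eq:opt_mixed_strategies_body}. The identity
\begin{equation*}
\sup_{\mu \in [0,1]} \ratio_{\numS}\left(\pi^{\bm{\lambda}}, \mathcal{B}(\mu)\right) \;=\; \sup_{p \in \Delta([0,1])} \mathbb{E}_{\mu \sim p}\!\left[\ratio_{\numS}\left(\pi^{\bm{\lambda}}, \mathcal{B}(\mu)\right)\right]
\end{equation*}
holds because expectations cannot exceed suprema and because point masses are admissible priors. The usual minimax inequality then yields
\begin{equation*}
\ratio_{\numS}^{*} \;\geq\; \sup_{p \in \Delta([0,1])} \inf_{\pi^{\bm{\lambda}} \in \setOS} \mathbb{E}_{\mu \sim p}\!\left[\ratio_{\numS}\left(\pi^{\bm{\lambda}}, \mathcal{B}(\mu)\right)\right].
\end{equation*}
I would then instantiate this with the prior $p^{*}$ supplied by Proposition \ref{prop:prior}, supported on $\{\mu^{-}, \mu^{+}\}$ from Proposition \ref{prop:balancing_regret}. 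Proposition \ref{prop:prior} tells me that $\pi^{k,\gamma}$ is itself a best response to $p^{*}$ within $\setOS$, and the balance equation \eqref{eq:indifference_body} implies that the expected regret of $\pi^{k,\gamma}$ under $p^{*}$ equals $\sup_{\mu \in [0,1]} \ratio_{\numS}(\pi^{k,\gamma}, \mathcal{B}(\mu))$. Chaining these two facts delivers the matching lower bound and completes the sandwich.

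The conceptually delicate step will be verifying that Proposition \ref{prop:prior} is indeed applicable with the specific $k, \gamma, \mu^{-}, \mu^{+}$ produced by Proposition \ref{prop:balancing_regret}, so I would check that under Assumption \ref{ass:non-deg} the constructed $\mu^{-}$ lies strictly in $(0, 1-q)$ and $\mu^{+}$ strictly in $(1-q, 1)$ (handling any boundary corner cases by separate direct arguments using \Cref{prop:degen_opt}). The rest of the argument is essentially a clean saddle-point closure: upper bound from the candidate policy, lower bound from the candidate prior, with \Cref{prop:balancing_regret} and \Cref{prop:prior} pinned together by the indifference condition so that both sides meet at the same value $\sup_{\mu} \ratio_{\numS}(\pi^{k,\gamma}, \mathcal{B}(\mu))$. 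The statement that $k$ satisfies \eqref{eq:crossing_1}–\eqref{eq:crossing_2} is inherited verbatim from \Cref{prop:balancing_regret}.
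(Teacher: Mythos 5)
Your proposal is correct and follows essentially the same route as the paper: the degenerate cases are dispatched by \Cref{prop:degen_opt}, and in the non-degenerate case the minimax value is sandwiched by (i) the upper bound $\ratio_{\numS}^{*} \leq \sup_{\mu} \ratio_{\numS}(\pi^{k,\gamma},\mathcal{B}(\mu))$ obtained from \Cref{thm:minimax_reduction}, \Cref{thm:main_mixture}, and the candidate of \Cref{prop:balancing_regret}, and (ii) the matching lower bound obtained by passing to mixed strategies, applying weak duality, and instantiating the prior $p^{*}$ from \Cref{prop:prior}, with the indifference condition \eqref{eq:indifference_body} closing the chain. You are also right to flag the domain issue — \Cref{prop:balancing_regret} places $\mu^{\pm}$ in closed intervals while \Cref{prop:prior} is stated for the open intervals — which is a subtlety the paper's own proof invokes silently; your plan to rule out boundary cases via \Cref{prop:degen_opt} is the right patch.
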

This result provides a full characterization of an \textit{optimal} data-driven policy across data sizes. Notably, $i.)$ an optimal policy and associated optimal performance can be characterized for this class of problems; and $ii.)$ the optimal data-driven policy takes a surprisingly simple structure: it randomizes between two consecutive order statistics. This result allows not only to obtain an optimal algorithm but also to \textit{quantify exactly the robust value of data} associated with historical demand for this class of problems.

\paragraph{Remark. (A ``better'' minimax optimal policy)} A corollary of \Cref{thm:optimal_min_max} is that the \textit{deterministic} policy which selects the inventory level equal to $(1-\gamma) D_{k-1:\numS} + \gamma D_{k:\numS}$ is also minimax optimal (where $k$ and $\gamma$ are the parameters defined in \Cref{thm:optimal_min_max}). In addition, this policy is uniformly better (across all instances) than the minimax optimal mixture of order statistics policy, and its performance coincides with the latter against Bernoulli distributions on which it yields the same worst-case relative regret. We  formalize this in \Cref{cor:derand} below.
\begin{corollary}
\label{cor:derand}
For every $\numS \geq 1$. 
Let $\optpol$ be the minimax optimal policy defined in \Cref{thm:optimal_min_max} and let $\derpol$ be the policy which prescribes the inventory level $\gamma D_{k-1:\numS} + (1-\gamma) D_{k:\numS}$. Then,
\begin{equation*}
\sup_{F \in \mathcal{F}} \ratio_n\left( \optpol, F \right) = \sup_{F \in \mathcal{F}} \ratio_n\left( \derpol, F \right) = \ratio_{\numS}^{*}.
\end{equation*}
Furthermore, for every $F \in \mathcal{F}$,
\begin{equation*}
\ratio_n\left( \derpol , F \right) \leq \ratio_n\left( \optpol, F \right).
\end{equation*}
\end{corollary}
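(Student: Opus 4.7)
The plan is to reduce the corollary to a routine application of Jensen's inequality, using that the newsvendor cost $c_F(\cdot)$ is convex in the decision variable, and then invoke the minimax optimality already established for $\optpol$ in \Cref{thm:optimal_min_max} to obtain the matching lower bound for $\derpol$.

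First I would observe that for any distribution $F \in \mathcal{F}$, the function $x \mapsto c_F(x) = \mathbb{E}_{D \sim F}[\cu(D-x)^{+} + \co(x-D)^{+}]$ is convex in $x$, since for each $D$ the integrand is a maximum of linear functions of $x$, and convexity is preserved under expectation. Conditioning on the historical samples $\mathbf{D}_1^{\numS}$, the deterministic prescription of $\derpol$ is a convex combination of $D_{k-1:\numS}$ and $D_{k:\numS}$ whose weights coincide with the mixture weights defining $\optpol$ (up to the labeling convention in \Cref{thm:optimal_min_max}). Therefore Jensen's inequality gives, almost surely in $\mathbf{D}_1^{\numS}$,
\begin{equation*}
c_F\bigl(\gamma D_{k-1:\numS} + (1-\gamma) D_{k:\numS}\bigr) \leq \gamma\, c_F(D_{k-1:\numS}) + (1-\gamma)\, c_F(D_{k:\numS}).
\end{equation*}

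Next I would take expectations with respect to $\mathbf{D}_1^{\numS} \sim F^{\otimes \numS}$ on both sides. The left-hand side is exactly $\mathcal{C}(\derpol, F, \numS)$, while the right-hand side equals $\mathcal{C}(\optpol, F, \numS)$ by definition of the mixed policy. Dividing by $\opt(F)$ and subtracting one (and handling the degenerate case $\opt(F)=0$ by the convention stated in the paper, noting that if $F$ is a point mass then all order statistics coincide with that point, so $\derpol$ also achieves zero cost), we conclude that
\begin{equation*}
\ratio_\numS(\derpol, F) \leq \ratio_\numS(\optpol, F), \qquad \forall F \in \mathcal{F}.
\end{equation*}
This establishes the second (pointwise) claim of the corollary.

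Finally, to obtain the first (worst-case) identity, I would take the supremum over $F$ on both sides of the pointwise inequality to deduce $\sup_F \ratio_\numS(\derpol, F) \leq \sup_F \ratio_\numS(\optpol, F) = \ratio_\numS^{*}$, where the equality uses \Cref{thm:optimal_min_max}. The reverse inequality $\sup_F \ratio_\numS(\derpol, F) \geq \ratio_\numS^{*}$ is immediate: $\derpol$ is admissible (a deterministic element of $\Pi_\numS$), and $\ratio_\numS^{*}$ is the infimum of the worst-case regret over all admissible policies by definition \eqref{eq:minmax}. Combining both directions yields $\sup_F \ratio_\numS(\derpol, F) = \ratio_\numS^{*}$.

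There is no substantive obstacle here: the entire argument is Jensen plus the previously established minimax result. The only bookkeeping items to watch are (i) confirming the convex-combination weight convention matches the mixture weights of $\optpol$ so that Jensen is applied to the correct two-point distribution, and (ii) handling the degenerate point-mass case where the denominator in the regret ratio vanishes, which is resolved by the paper's convention together with the fact that order statistics of $\numS$ identical samples all equal the common value.
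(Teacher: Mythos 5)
Your proof is correct and takes essentially the same approach as the paper: both derive the pointwise inequality $\ratio_n(\derpol,F)\le\ratio_n(\optpol,F)$ from the convexity of $x\mapsto c_F(x)$ (Jensen), then sandwich $\sup_F\ratio_n(\derpol,F)$ between $\ratio_\numS^*$ and $\sup_F\ratio_n(\optpol,F)=\ratio_\numS^*$ using the admissibility of $\derpol$ and \Cref{thm:optimal_min_max}. The two bookkeeping items you flag are right to note --- in particular, the weight convention in the corollary statement is transposed relative to \Cref{thm:optimal_min_max} and to the paper's own proof, which uses $(1-\gamma)D_{k-1:\numS}+\gamma D_{k:\numS}$ so that the convex combination matches the mixture weights of $\optpol$.
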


\paragraph{Remark.}We also observe that a byproduct of our analysis has implications for the value of data in the Bayesian newsvendor problem.  Our analysis shows that there is no gap between the frequentist problem \eqref{eq:minmax} and its bayesian counterpart in the sense that, against the worst prior (which is a randomization between two Bernoulli distributions), the Bayesian problem is as hard (in the sense of the value of data) as the frequentist one and achieves the same worst-case relative regret.

%%%%%%%%%%%%%%%%%%%%%%%
\vspace{-3mm}

\subsection{Optimal Performance and the Robust Value of Data} \label{sec:opt-VI}
\Cref{alg:optimal} (presented in \Cref{apx:implement_alg}) enables us to compute the performance of the optimal policy defined in \Cref{thm:optimal_min_max}. \Cref{fig:opt_vs_SAA} presents a comparison of the performance of SAA and the best achievable performance for a data-driven policy for different critical fractiles.  
\begin{figure}[h!]
\centering
\subfigure[$q = .8$]
{
\begin{tikzpicture}[scale=.75]
\begin{axis}[
            title={},
            xmin=0,xmax=100,
            ymin=0.0,ymax=2.2,
            width=10cm,
            height=8cm,
            table/col sep=comma,
            xlabel = number of samples $\numS$,
            ylabel = relative regret,
            grid=both,
            skip coords between index={0}{1},
            legend pos=north east]
      
\addplot [red,mark=square,mark options={scale=.5}] table[x=n,y=SAA] {Data/SAA_and_opt_paper_08.csv};
    \addlegendentry{$\ratio_\numS( \SAA)$}
\addplot [blue,mark=*,mark options={scale=.5}] table[x=n,y=Opt] {Data/SAA_and_opt_paper_08.csv};
    \addlegendentry{$\ratio_\numS^*$}
\end{axis}
\end{tikzpicture}
}
\subfigure[$q = .9$]
{
\begin{tikzpicture}[scale=.75]
\begin{axis}[
            title={},
            xmin=0,xmax=100,
            ymin=0.0,ymax=2.2,
            width=10cm,
            height=8cm,
            table/col sep=comma,
            xlabel = number of samples $\numS$,
            ylabel = relative regret,
            grid=both,
            skip coords between index={0}{1},
            legend pos=north east]
                    
\addplot [red,mark=square,mark options={scale=.5}] table[x=n,y=SAA] {Data/SAA_and_opt_paper_09.csv};
    \addlegendentry{$\ratio_\numS( \SAA)$}
\addplot [blue,mark=*,mark options={scale=.5}] table[x=n,y=Opt] {Data/SAA_and_opt_paper_09.csv};
    \addlegendentry{$\ratio_\numS^*$}
\end{axis}
\end{tikzpicture}
}
\caption{\textbf{Optimal performance.} The figure depicts optimal performance versus the performance of SAA as a function of the number of samples $\numS$ for  different critical fractiles. }
\label{fig:opt_vs_SAA}
\end{figure}
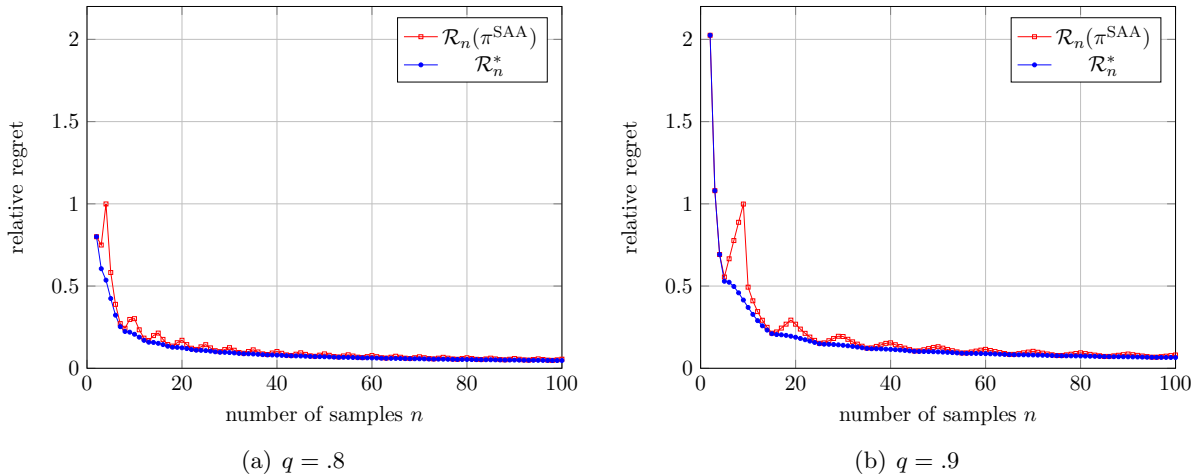

In this plot, the curve associated to the optimal policy describes the exact value of historical demand data in the newsvendor problem. It gives a clear sense of the inherent hardness of this class of data-driven problems.
Deriving the full spectrum of performances  for both SAA and the optimal data-driven policy shows that SAA can be considerably improved when the number of samples is relatively small. In particular, when $q = .9$, the relative regret for SAA at $\numS = 9$ can be reduced by more than 50\% by using the optimal policy, and for $\numS = 19$, it can be reduced by $33\%$. 
We also remark that the performance of SAA matches more closely the optimal one as $\numS$ becomes large. The amplitudes of the peaks decrease as the number of samples increases. 
We further explore the asymptotic performance in \Cref{sec:asymptotic}.

In \Cref{tab:comparison_SAA_opt}, we present a comparison of the number of samples required to guarantee various levels of relative regret for different values of the critical fractile for both SAA and the optimal policy. Recall the definition of $N^{\text{exact-SAA}}$ presented in \Cref{sec:comp}. We similarly define, the number of samples required to achieve a given performance threshold $\tau \geq 0$ when using the optimal policy, as
\begin{eqnarray*}
N^{\text{opt}} (\tau) &:=& \min \left\{ m \geq 1 \, \Big \vert \, \forall \numS \geq m, \: \ratio_{\numS}^* \leq \tau   \right\}.
\end{eqnarray*}

\begin{table}[h!]
\centering
\begin{tabular}{c l   c c c c c }
&& \multicolumn{5}{c}{Expected relative regret target ($\tau$)}\\
\cline{3-7}
q&Bound used & 25\% & 20\% & 15\% & 10\% & 5\%  \\ 
 \hline
 \hline
 \vspace{-4mm}
 \\
 \multirow{2}{*}{.7}&  $N^{\text{exact-SAA}}(\tau)$ &  8&  11&  15& 31&  84  \\
 &$N^{\text{opt}}(\tau) $   &  5&  8&  12& 21& 68\\
\hline
 \vspace{-4mm}
 \\
 \multirow{2}{*}{.8} & $N^{\text{exact-SAA}}(\tau)$ &  11&  16&  21& 41&  116  \\
& $N^{\text{opt}}(\tau)$   &  8&  11&  16& 28& 91\\
\hline
 \vspace{-4mm}
 \\
\multirow{2}{*}{.9} & $N^{\text{exact-SAA}}(\tau)$ &  21&  23&  42& 71&  210  \\
& $N^{\text{opt}}(\tau)$    &  14&  19&  25& 50& 161\\
\end{tabular}
\caption{\textbf{Number of samples required by SAA and by the optimal policy to ensure a target relative regret.}  The table reports the \textit{exact} number of samples needed to reach a relative regret accuracy level, comparing the exact worst-case analysis of SAA developed in \Cref{thm:SAA}, to the optimal minimax performance presented in \Cref{thm:optimal_min_max} for different values of the critical fractile $q$.}
\label{tab:comparison_SAA_opt}
\end{table}
We observe that the number of samples required to ensure a particular level of accuracy across all distributions can be reduced by $17$ to $40$ \% (across the targets tested) when moving from SAA to the minimax optimal policy.

\paragraph{Remark (Structure of the optimal policy).}  \label{pg:R1-k}
While \Cref{thm:optimal_min_max} does not provide an exact characterization of the parameter $k$, we have observed numerically that $k$ is either equal to $\ceil{q \numS}$ or $\ceil{q \numS} +1$. We discuss in more details this aspect in \Cref{apx:values_k}. As a consequence, the optimal policy can be interpreted as a  correction of SAA. 

%%%%%%%%%%%%%%%%%%%%%%%%%
%%%%%%%%%%%%%%%%%%%%%%%%%%

\section{Asymptotic Analysis of Optimal Performance} \label{sec:asymptotic}

We derived in \Cref{sec:optimal} a characterization of an optimal data-driven policy for an arbitrary finite number of samples.  We now provide a simple approximation of the optimal performance as the number of samples grows large and derive the exact convergence rate of the minimax relative regret to $0$ with its associated  multiplicative constant.

In this section, as we study what happens when $\numS$ changes, we will introduce the notion of a policy sequence. A policy sequence is  defined as a sequence $\bm{\pi} := (\pi_{\numS})_{\numS \geq 1}$ of mappings where for every $n \geq 1$, we have $\pi_{\numS} \in \Pi_{\numS}$. For example,   $\bm{\pi}^{\text{SAA}}$ denotes the sequence of policies such that for any $\numS \geq 1$,
\begin{equation*}
\pi^{\text{SAA}}_{\numS} (\mathbf{D}_{1}^\numS) = D_{\ceil{q \numS}:\numS}.
\end{equation*}

There are three types of asymptotic results one could consider. A first characterization of the performance, which is typically referred to as consistency or first order optimality states that the cost of a data-driven policy converges to the optimal cost as the number of samples goes to infinity.
In our setting,  a policy sequence $\bm{\pi}$ is said to be consistent if
$$ \sup_{F \in \mathcal{F}} \ratio_\numS \left(\bm{\pi}, F \right) \to 0 \qquad \text{as $\numS\to \infty$}.$$ 

A second, more refined characterization consists in establishing the rate of convergence of the worst-case performance of a data-driven policy. In the data-driven newsvendor model, for a given sequence $(u_\numS)_{\numS \in \mathbb{N}}$ converging to $0$, we say that the cost of a data-driven policy-sequence $\bm{\pi}$ converges to zero at  rate $u_\numS$ if
$$ \sup_{F \in \mathcal{F}} \ratio_\numS \left( \bm{\pi}, F \right) = \mathcal{O} \left( u_\numS \right) \qquad \text{as $\numS\to \infty$}.$$
At the rate level, the best result one can aim at is to prove that a policy-sequence converges at a rate of $\ratio^*_\numS$, in which case we say that the policy achieves rate-optimality.

A third, yet more refined characterization enables a sharper understanding of the asymptotic performance of a data-driven policy. It consists in deriving a sequence equivalent to the relative regret as the number of samples goes large. In particular, for a given sequence $(u_\numS)_{\numS \in \mathbb{N}}$ we say that the performance of a data-driven policy-sequence $\bm{\pi}$ is asymptotically equivalent to $u_\numS$ if,
$$ \sup_{F \in \mathcal{F}} \ratio_\numS \left( \bm{\pi}, F \right) = u_\numS + o \left( u_\numS \right) \qquad \text{as $\numS\to \infty$}.$$
Deriving an equivalent sequence is a much stronger result than  the rate of convergence as it requires to characterize the convergence rate \textit{as well as} the multiplicative constant associated with the rate. When a policy-sequence has a performance asymptotically equivalent to $\ratio_\numS^*$, we say that it is rate-optimal at the multiplicative constant level.

From the work of \cite{levi2} one may derive consistency results  and the rate of convergence for $\sup_{F \in \mathcal{F}} \ratio_{\numS}(\SAA,F)$. In particular, we show in \Cref{lem:levi_bound}, stated and proved in \Cref{app:add_aux}, that their bound implies that $\sup_{F \in \mathcal{F}} \ratio_{\numS}(\SAA,F)$ scales at a $\mathcal{O} \left( 1/\sqrt{\numS} \right)$ rate.

The next result characterizes the asymptotic equivalent of the relative regret for the optimal data-driven policy and establishes that SAA is not only rate-optimal but also rate-optimal at the multiplicative constant level. 
\begin{theorem}[Optimal Asymptotic Behavior] 
\label{thm:asymptotic}
\begin{enumerate}[i.)]

\item The optimal performance $\ratio^{*}_{\numS}$ converges to zero and satisfies
\begin{equation}
\label{eq:lb_asymptotic}
\ratio^{*}_{\numS} = \frac{C^*}{\sqrt{\numS}} + o \left( \frac{1}{\sqrt{\numS}} \right) \qquad \text{as $\numS\to \infty,$}
\end{equation}
where 
\begin{equation*}
C^* := \frac{1}{\sqrt{q(1-q)}} \max_{p \geq 0} p \left(1 - \Phi(p) \right) \approx \frac{.17}{\sqrt{q(1-q)}},
\end{equation*}
with $\Phi$ denoting the cdf of a standard Gaussian distribution.
\item In addition, the policy sequence associated with SAA is rate-optimal at the multiplicative constant level. In particular,
\begin{equation*}
\sup_{F \in \mathcal{F}} \ratio_{\numS} \left( \pi^{\text{SAA}}_{\numS}, F \right) =  \frac{C^*}{\sqrt{\numS}} + o \left( \frac{1}{\sqrt{\numS}} \right) \qquad \text{as $\numS\to \infty$}.
\end{equation*}
\end{enumerate}
\end{theorem}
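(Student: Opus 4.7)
The plan is to prove (i) and (ii) together: the upper bound on $\ratio^*_\numS$ and statement (ii) both follow from a single asymptotic analysis of SAA built on \Cref{thm:SAA}, and the matching lower bound on $\ratio^*_\numS$ comes from weak minimax duality applied to a carefully chosen two-point Bernoulli prior.

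\textbf{Step 1 (SAA asymptotics).} Starting from $\sup_{F\in\mathcal{F}}\ratio_\numS(\SAA,F)=\sup_{\mu\in[0,1]}\ratio_\numS(\SAA,\mathcal{B}(\mu))$ of \Cref{thm:SAA}, I would rescale $\mu=1-q+s/\sqrt{\numS}$ and treat the two branches $s<0$ and $s>0$ of the $\min$ in the denominator separately. Each branch reduces algebraically to $|s|/\sqrt{\numS}$ times a binomial tail of $B_{\lceil q\numS\rceil,\numS}$ evaluated at $q\pm|s|/\sqrt{\numS}$, divided by $q(1-q)+O(1/\sqrt{\numS})$. Applying De Moivre--Laplace (made uniform on compact sets via Berry--Esseen) converts these binomial tails into $1-\Phi(|s|/\sqrt{q(1-q)})$; setting $p=|s|/\sqrt{q(1-q)}$, both branches collapse to the common limit
\[
\ratio_\numS(\SAA,\mathcal{B}(\mu))=\frac{p(1-\Phi(p))}{\sqrt{q(1-q)\,\numS}}+o\!\left(\frac{1}{\sqrt{\numS}}\right).
\]
A Hoeffding-type bound shows the supremum is attained at bounded $p$ (the binomial tail is exponentially small once $|s|\gg 1$), so maximizing $p(1-\Phi(p))$ yields statement (ii); since $\ratio^*_\numS\le\sup_{F}\ratio_\numS(\SAA,F)$, this also gives the upper bound in (i).

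\textbf{Step 2 (matching lower bound).} Weak duality gives $\ratio^*_\numS\ge\inf_{\pi\in\Pi_\numS}\mathbb{E}_{F\sim\tilde p}[\ratio_\numS(\pi,F)]$ for any prior $\tilde p$. I would choose $\tilde p$ to place mass $1/2$ on each of $\mathcal{B}(\mu^\pm)$ with $\mu^\pm=1-q\pm s$ and $s=p^\ast\sqrt{q(1-q)/\numS}$, where $p^\ast=\argmax_{p\ge 0}p(1-\Phi(p))$. Against Bernoulli data, the count $K=\sum_i D_i$ is a sufficient statistic, and by \Cref{thm:minimax_reduction} attention reduces to mixtures of order statistics; for Bernoulli samples these amount to randomizing between $x=0$ and $x=1$ via some $\phi(K)\in[0,1]$. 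The oracle prescribes $x=0$ under $\mathcal{B}(\mu^-)$, with $\opt=q\mu^-$, and $x=1$ under $\mathcal{B}(\mu^+)$, with $\opt=(1-q)(1-\mu^+)$. A short computation yields $\ratio_\numS(\pi,\mathcal{B}(\mu^-))=s\,\mathbb{E}^-[\phi(K)]/(q\mu^-)$ and $\ratio_\numS(\pi,\mathcal{B}(\mu^+))=s\,\mathbb{E}^+[1-\phi(K)]/((1-q)(1-\mu^+))$, so the Bayes risk equals
\[
\frac{s}{q(1-q)+O(1/\sqrt{\numS})}\cdot\tfrac12\bigl(\mathbb{E}^-[\phi]+\mathbb{E}^+[1-\phi]\bigr).
\]
The infimum over $\phi$ of the bracketed factor is the equal-prior Bayes error $\tfrac12(1-\mathrm{TV}(\mathrm{Bin}(\numS,\mu^-),\mathrm{Bin}(\numS,\mu^+)))$; the CLT (standardized mean gap $2p^\ast$) gives $\mathrm{TV}\to 2\Phi(p^\ast)-1$, so the Bayes error tends to $1-\Phi(p^\ast)$. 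Substituting yields $\inf_\pi\mathbb{E}_{\tilde p}[\ratio_\numS]=p^\ast(1-\Phi(p^\ast))/\sqrt{q(1-q)\numS}+o(1/\sqrt{\numS})=C^*/\sqrt{\numS}+o(1/\sqrt{\numS})$, matching the upper bound and completing (i).

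\textbf{Main obstacle.} The algebra and prior construction are routine once a CLT is available; the delicate part is \emph{uniformity} of the asymptotics. One needs: (a) De Moivre--Laplace for $B_{\lceil q\numS\rceil,\numS}$ uniform on compact sets of $p$ with $o(1)$ remainder (Berry--Esseen), (b) the suprema in Steps 1 and 2 to be genuinely attained on bounded $p$ so that the pointwise CLT controls the full sup (Hoeffding's inequality on the binomial tail), and (c) a quantitative rate for $\mathrm{TV}(\mathrm{Bin}(\numS,\mu^-),\mathrm{Bin}(\numS,\mu^+))\to 2\Phi(p^\ast)-1$ so that the $o(1/\sqrt{\numS})$ error-bookkeeping in Step 2 is honest. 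All three are standard, but careful accounting is required to keep the additive remainder strictly below $1/\sqrt{\numS}$.
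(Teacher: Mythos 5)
Your proposal is correct, and for the lower bound it takes a genuinely different route from the paper. Step~1 (the SAA asymptotics) matches the paper's approach: both rescale $\mu=1-q+s/\sqrt{\numS}$, apply a CLT to the relevant binomial tail (the paper via Lindeberg--Feller, you via De~Moivre--Laplace/Berry--Esseen, both essentially the same), and use an exponential tail bound to localize the supremum to bounded~$p$. Step~2 is where you diverge. The paper's lower bound leans on \Cref{thm:optimal_min_max}: it writes $\ratio^*_\numS$ as the worst-case regret of the explicit optimal mixture $\pi^{k_\numS,\gamma_\numS}$ of two consecutive order statistics, passes to subsequences of $(k_\numS - q\numS)/\sqrt{\numS}$, and invokes \Cref{lem:asymptotic_exact} and \Cref{lem:SAA_opt} to show the limiting constant is minimized at $\ell=0$. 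You instead bypass the optimal-policy characterization entirely: weak duality with a symmetric two-point Bernoulli prior reduces the infimum over policies (via sufficiency of $K=\sum_i D_i$ and affineness of the Bernoulli regret in the prescribed inventory) to a binary testing problem, whose Bayes error $\tfrac12(1-\mathrm{TV})$ converges to $1-\Phi(p^*)$ by the local CLT for binomials. This is cleaner and more information-theoretic; it trades the paper's dependence on Theorems \ref{thm:minimax_reduction}--\ref{thm:optimal_min_max} for a one-prior duality calculation. Two small repairs: you should drop the appeal to \Cref{thm:minimax_reduction} in Step~2 (it is a \emph{minimax} reduction, not a Bayes one, and using it to restrict the class over which you take the infimum would, if anything, make the quantity larger, undermining the lower bound; sufficiency plus linearity already gives the reduction to $\phi(K)$ with no monotonicity or boundary constraints, which is exactly what is needed). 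And the two branch denominators $q\mu^-$ and $(1-q)(1-\mu^+)$ differ at order $s=O(1/\sqrt{\numS})$, so the weighted Bayes error is not exactly $\tfrac12(1-\mathrm{TV})$; the correction is $O(s)$ and hence contributes $O(1/\numS)$ to the Bayes risk, which is within the stated $o(1/\sqrt{\numS})$ budget but must be tracked explicitly. With these two points tightened, the argument is a valid alternative proof.
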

This result describes the exact rate of convergence of the optimal relative regret as the number of samples goes to infinity. 
While Sections \ref{sec:SAA} and \ref{sec:optimal} yield the first exact results for arbitrary sample sizes, the significant novelty in this section lies in explicitly characterizing more finely the rate of convergence of the performance of the optimal data-driven policy as data grows. Indeed, we derive a semi-closed form expression of  the exact constant $C^*$ associated with the rate of convergence of the optimal policy for this class of problems. This expression highlights the role of the critical fractile $q$ in affecting optimal relative regret performance. Problems with high and low values of $q$ are ``harder'' in that they lead to higher constant $C^*$, and in turn slower convergence to zero. 

 In addition, we are able to establish that, while SAA was suboptimal for finite samples in general,  it satisfies a very strong form of near-optimality when the number of samples is large. While SAA leads to relative regret that converges to zero at rate  $\mathcal{O} \left(1/\sqrt{\numS} \right)$, it also  leads to the \textit{optimal} constant that one could achieve at this rate of convergence. 

By leveraging our novel analysis across all data sizes, we derive new insights in the asymptotic regime. Therefore, understanding more finely the performance of data-driven policies with finite data also improves our understanding of their performance as the number of samples goes to infinity.

%%%%%%%%%%%%%%%%%%%%%%%%%%%%%%%%%%%%%%%%%%%%%%
%%%%%%%%%%%%%%%%%%%%%%%%%%%%%%%%%%%%%%%%%%%%%

\section{ Instance-Dependent Performance}
\label{sec:experiment}

Our approach enables us to develop a sharp understanding of the robust performance of SAA and of a minimax optimal policy for the data-driven newsvendor problem. Our analysis quantifies exactly the worst-case performance of these algorithms when the worst-case is taken over the whole class of distributions with finite first moment without any shape restriction. In this section, to illustrate the range of possible performances that could emerge, we compute the empirical performances of both algorithms against various distributions: Uniform,  Exponential, Lognormal and Pareto. Note that these distribution families are the ones used in \cite[Table 1]{levi2}  which are supported on $[0,\infty)$. 

We compute numerically the expected regret of a data-driven policy $\pi$ that uses $\numS$ samples against a \emph{fixed} distribution by repeating independently $M = 10^5$ times the following procedure. For every $m \in \{1,\ldots,M\}$, we first draw  $\numS$ independent samples  $\{d^m_1, \ldots, d^m_\numS\}$ representing in-sample demand realizations. We then draw independently $\{\tilde{d}^m_1, \ldots, \tilde{d}^m_K \}$ (where $K = 1000$) samples to compute the out-of-sample cost.  We finally  draw a decision realization $\tilde{x}^m$ from the distribution $\pi(  d_1^m, \ldots, d_\numS^m)$ and compute the average realized cost
$\tilde{c}^m = \frac{1}{K} \sum_{k=1}^K c(\tilde{x}^m, \tilde{d}^m_k).$
Our estimator of the expected relative regret for policy $\pi$ is finally defined as
\begin{equation*}
\frac{1}{M} \sum_{m=1}^M  \left( \frac{\tilde{c}^m }{\opt(F)} -1 \right).
\end{equation*}

\Cref{tab:numerical_comparison} presents the number of samples\footnote{We report the minimum number of samples such that the upper bound of the $95\%$ confidence interval is below the desired relative regret target.} required to achieve a target accuracy level for both SAA and the minimax optimal policy presented in \Cref{cor:derand}. 

\iffalse
\begin{table}[h!]
\centering
\begin{tabular}{c l   c c c c c }
&& \multicolumn{5}{c}{Expected relative regret target ($\tau$)}\\
\cline{3-7}
Policy& Distribution & 25\% & 20\% & 15\% & 10\% & 5\%  \\ 
 \hline
 \hline
 \vspace{-4mm}
 \\
 \multirow{5}{*}{SAA}& Worst-case (Bernoulli)&  21&  23&  42& 71&  210   \\
 & Uniform(0,1)   &  6&  11&  12& 14& 25\\
 & Exponential(1)   &  7&  10&  12& 20& 40\\
 & Log-normal($\mu=1,\sigma=1.805$)   &  10&  10&  10& 20& 40\\
 & Pareto($\alpha =1.5$, $x_m =1$)   &  10&  10&  29& 29& 83\\
\hline
 \vspace{-4mm}
 \\
 \multirow{5}{*}{Optimal (minimax) Policy}  & Worst-case (Bernoulli)&    14&  19&  25& 50& 161   \\
 & Uniform(0,1)   &  8&  9&  11& 14& 24\\
 & Exponential(1)   &  9&  11&  14& 21& 41\\
 & Log-normal($\mu=1,\sigma=1.805$)   &  12&  14&  15& 22& 36\\
 & Pareto($\alpha =1.5$, $x_m =1$)   &  13&  14&  29& 29& 83\\
\end{tabular}
\caption{\textbf{Number of samples required by SAA and by the optimal policy to achieve a target relative regret.}  The table reports a numerical estimation of the number of samples needed to reach a relative regret accuracy level against several fixed distributions. The worst-case line indicates the \emph{exact} number of samples required to achieve a certain target performance level.}
\label{tab:numerical_comparison}
\end{table}
\fi

\begin{table}[h!]
\centering
\begin{tabular}{c l   c c c c c }
&& \multicolumn{5}{c}{Expected relative regret target ($\tau$)}\\
\cline{3-7}
Policy& Distribution & 25\% & 20\% & 15\% & 10\% & 5\%  \\ 
 \hline
 \hline
 \vspace{-4mm}
 \\
 \multirow{5}{*}{SAA}& Worst-case (Bernoulli)&  21&  23&  42& 71&  210   \\
 & Uniform(0,1)   &  6&  11&  12& 14& 25\\
 & Exponential(1)   &  7&  10&  13& 20& 40\\
 & Log-normal($\mu=1,\sigma=1.805$)   &  10&  10&  10& 20& 40\\
 & Pareto($\alpha =1.5$, $x_m =1$)   &  10&  16&  16&20& 93\\
\hline
 \vspace{-4mm}
 \\
 \multirow{5}{*}{Minimax Optimal  Policy}  & Worst-case (Bernoulli)&    14&  19&  25& 50& 161   \\
 & Uniform(0,1)   &  6&  7&  10& 13& 22\\
 & Exponential(1)   &  6&  8&  10& 18& 37\\
 & Log-normal($\mu=1,\sigma=1.805$)   &  9&  11&  14& 19& 36\\
 & Pareto($\alpha =1.5$, $x_m =1$)   &  10&  16&  16&18& 93\\
\end{tabular}
\caption{\textbf{Number of samples required by SAA and by the minimax optimal policy $\derpol$ (cf. \Cref{cor:derand}) to achieve a target relative regret.}  The table reports a numerical estimation of the number of samples needed to reach a relative regret accuracy level against several fixed distributions. The worst-case line indicates the \emph{exact} number of samples required to achieve a certain target performance level.}
\label{tab:numerical_comparison}
\end{table}

While the minimax policy is optimized relatively to a rather conservative measure, it is notable that its performance is  on par or most often better than the one of SAA even in ``mild'' cases. In other words, the ``robustification'' of SAA  provides significant benefits in the worst case along with improvements against a variety of ``mild'' distributions.

%%%%%%%%%%%%%%%%%%%%%%%%%%%%%%%%%
%%%%%%%%%%%%%%%%%%%%%%%%%%%%%%

\section{Conclusion}

In this paper, we investigate the central class of data-driven newsvendor problems. 
We analyze the performance of the central SAA algorithm across all data sizes and establish a characterization of its \textit{actual} worst-case performance. The exact performance characterization of this widely studied policy leads to a new understanding of the economics of data sizes, highlighting the very strong performance achievable with limited data. At the same time, it also demonstrates a notable phenomenon:  when using SAA, more data is not synonymous with better worst-case performance. 

 In turn, we optimize over the entire space of data-driven algorithms that maps data to decisions and derive an optimal algorithm (in the minimax sense) and its associated performance. This provides the first optimality result in this class of data-driven problems. It also perfectly quantifies the value of data and the potential associated with corrections to the classical SAA algorithm, especially with smaller data sizes. It further emphasizes that for this class of problems, a decision-maker may operate efficiently even in environments with limited data.

Finally, we provide a simple approximation of the optimal worst-case performance achievable by a data-driven algorithm when the number of samples is large. In particular, we leverage our exact analysis across all data sizes to characterize the exact rate of convergence of the minimax relative regret and characterize in semi-closed form the multiplicative constant associated with it. We further show that while SAA is suboptimal in general, it is rate-optimal at the multiplicative constant level when the number of samples is large.

The present paper offers a new lens, that of the transient regime of learning, through which some data-driven problems may be approached, but also highlights the possibility to operate effectively with limited data. There are many avenues for future research, ranging from exploring the possibility of performance characterization and optimization across data sizes for sequential decision-making problems with different information structures (e.g., censoring)  to exploring the transient regime of learning in contextual newsvendor problems, or more general  stochastic problem classes.

\section*{Acknowledgment}
The authors are grateful to the editor, the associate editor and two anonymous reviewers whose valuable suggestions lead to many significant improvements in the final version of the paper. They also thank Nick Arnosti, Santiago Balseiro, Gah-Yi Ban, Omar El Housni, Yale T. Herer, Nathan Kallus, Will Ma, and Dan Russo for their questions and comments which helped improve this work.

{\setstretch{0.8}
\bibliographystyle{agsm}
\bibliography{ref}}

\renewcommand{\theequation}{\thesection-\arabic{equation}}
\renewcommand{\theproposition}{\thesection-\arabic{proposition}}
\renewcommand{\thelemma}{\thesection-\arabic{lemma}}
\renewcommand{\thetheorem}{\thesection-\arabic{theorem}}
%%%%%%%%%%%%%%%%%%%%%%%%%%%%%%%%%%%%%%%%%%%%%%%%%%%%%%%%%%%%%%%%%%%%%%%%%%%%%%%%%%%%%%%%
%%%%%%%%%%%%%%%%%%%%%%%%%%%%%%%%%%%%%%%%%%%%%%%%%%%%%%%%%%%%%%%%%%%%%%%%%%%%%%%%%%%%%%%%
%%%%%%%%%%%%%%%%%%%%%%%%%%%%%%%%%%%%%%%%%%%%%%%%%%%%%%%%%%%%%%%%%%%%%%%%%%%%%%%%%%%%%%%%
%%%%%%%%%%%%%%%%%%%%%%%%%%%%%%%%%%%%%%%%%%%%%%%%%%%%%%%%%%%%%%%%%%%%%%%%%%%%%%%%%%%%%%%%

\appendix 

\setstretch{1.15}

\newpage

\pagenumbering{arabic}
\renewcommand{\thepage}{App-\arabic{page}}

\begin{center}
 {\Large Electronic Companion: 
 Appendix for \\ 
 \textbf{How Big Should Your Data Really Be?\\ Data-Driven Newsvendor: Learning One Sample at a Time\\
}}
%\medskip
%\small Omar Besbes, Columbia University, Graduate School of Business, \texttt{obesbes@columbia.edu}\\
%\small Omar Mouchtaki, Columbia University, Graduate School of Business, \texttt{om2316@gsb.columbia.edu}
\end{center}

%%%%%%%%%%%%%%%%%%%%%%%%%%%%%%%%%%%%%%%%%%%%%%%%%%%%%%%%%%%%%%%%%
%%%%%%%%%%%%%%%%%%%%%%%%%%%%%%%%%%%%%%%%%%%%%%%%%%%%%%%%%%%%%%%%%
\setcounter{equation}{0}
\setcounter{proposition}{0}
\setcounter{lemma}{0}
\setcounter{theorem}{0}

\section{Proofs for Section \ref{sec:SAA}} \label{apx:SAA}

\begin{proof}[\textbf{Proof of \Cref{lem:quantile_policy_cost}.}]
Fix $F \in \mathcal{F}$.  
For every $r \in \{1, \ldots, \numS\}$, let $F_{D_{r:\numS}}$ denote the distribution of the random variable $D_{r:\numS}$. We will use the following alternative expression for $c_F(x)$. 
\begin{lemma}\label{lem:cost}
For any distribution $F \in \mathcal{F}$, and any $x \geq 0,$ 
\begin{equation*}
c_F(x) = b(\mathbb{E}_F[D] - x) + (b+h) \int_{0}^x F(y)dy.
\end{equation*}
\end{lemma}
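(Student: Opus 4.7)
The plan is to start from the definition $c_F(x) = \mathbb{E}_F[b(D-x)^+ + h(x-D)^+]$ and manipulate the integrand using the elementary identity $(D-x)^+ = (D-x) + (x-D)^+$. Substituting this in yields
\[
b(D-x)^+ + h(x-D)^+ = b(D-x) + (b+h)(x-D)^+,
\]
so, taking expectations and using that $D$ has a finite first moment (since $F \in \mathcal{F}$), we obtain
\[
c_F(x) = b(\mathbb{E}_F[D] - x) + (b+h)\,\mathbb{E}_F[(x-D)^+].
\]

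The only remaining step is to rewrite $\mathbb{E}_F[(x-D)^+]$ as $\int_0^x F(y)\, dy$. For this I would use the layer-cake / Fubini representation: since $D \geq 0$ almost surely under $F \in \mathcal{F}$, one has the pointwise identity
\[
(x-D)^+ = \int_0^x \mathbbm{1}\{D \leq y\}\, dy,
\]
and applying Tonelli's theorem (all integrands are nonnegative) gives $\mathbb{E}_F[(x-D)^+] = \int_0^x \mathbb{P}_F(D \leq y)\, dy = \int_0^x F(y)\, dy$.

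Combining these two steps yields the claim. There is no real obstacle here; the argument is a short two-line calculation that just repackages the newsvendor cost in a form where the dependence on $F$ appears only through $\mathbb{E}_F[D]$ and $\int_0^x F(y)\, dy$. This representation is exactly what is needed upstream in the proof of \Cref{lem:quantile_policy_cost} to turn expected costs of order-statistic policies into a single integral against Lebesgue measure on $\mathbb{R}_+$.
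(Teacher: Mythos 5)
Your proof is correct, and it takes a genuinely different route from the paper's. The paper writes $c_F(x)$ as $h\int_{[0,x]}(x-y)\,dF(y) + b\int_{(x,\infty)}(y-x)\,dF(y)$, regroups this into $(b+h)\int_{[0,x]}(x-y)\,dF(y) + b\int_{[0,\infty)}(y-x)\,dF(y)$, and then applies Riemann--Stieltjes integration by parts to convert $\int_{[0,x]}(x-y)\,dF(y)$ into $\int_0^x F(y)\,dy$. You instead use the pointwise identity $(D-x)^+ = (D-x) + (x-D)^+$ to strip off the linear part $b(D-x)$ at the level of the integrand, so that no split of the domain of integration is needed, and then rewrite the single remaining expectation $\mathbb{E}_F[(x-D)^+]$ via the layer-cake representation $(x-D)^+ = \int_0^x \mathbbm{1}\{D \le y\}\,dy$ plus Tonelli. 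The two arguments are comparably short, but yours sidesteps Stieltjes integration by parts, which for cadlag $F$ with possible atoms requires a little care about which one-sided limit of $F$ shows up in the boundary and cross terms; Tonelli applied to a nonnegative integrand has no such subtleties. The paper's approach is the natural one if you prefer to stay entirely within Stieltjes-integral manipulations of the cdf, whereas your version is slightly more self-contained and robust.
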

This result is proved in \Cref{apx:sec_body}. In what follows, we use $\bar{F}$ to denote the complementary cumulative distribution, i.e., $\bar{F} = 1 - F$.

We have
\begin{align*}
\mathbb{E}_{ x \sim F_{D_{r:\numS} }} [c_F(x)] &\stackrel{(a)}{=} b (\mathbb{E}_F[D] - \mathbb{E}_{F_{D_{r:\numS}} }[D_{r:\numS}] ) + (b+h) \int_0^{\infty} \int_0^s F(y)dy dF_{D_{r:\numS}}(s)\\
& \stackrel{(b)}{=} b (\mathbb{E}_F[D] - \mathbb{E}_{F_{D_{r:\numS}} }[D_{r:\numS}] ) + (b+h) \int_0^{\infty} \int_y^{\infty} dF_{D_{r:\numS}}(s) F(y)dy\\
& =  b (\mathbb{E}_F[D] - \mathbb{E}_{F_{D_{r:\numS}} }[D_{r:\numS}] ) + (b+h)\int_0^{\infty} \bar{F}_{D_{r:\numS}}(y)F(y)dy\\
& = b \left (\int_0^{\infty} \bar{F}(y) dy  - \int_0^\infty \bar{F}_{D_{r:\numS}}(y) dy \right) + (b+h)\int_0^{\infty} \bar{F}_{D_{r:\numS}}(y)F(y)dy\\
& = (b+h) \left [ q \left (\int_0^{\infty} \bar{F}(y) dy  - \int_0^{\infty} \bar{F}_{D_{r:\numS}}(y) dy \right) + \int_0^{\infty} \bar{F}_{D_{r:\numS}}(y)F(y)dy \right] \\
& = (b+h) \left [ \int_0^{\infty} \left( \bar{F}_{D_{r:\numS}}(y)(F(y)-q)  + q(1-F(y)) \right)dy \right] \\
& \stackrel{(c)}{=} (b+h) \left [ \int_0^{\infty} \left( (1- B_{r,\numS} (F(y)) ) (F(y)-q)  + q(1-F(y)) \right)dy \right].
\end{align*}
Here, $(a)$ follows from \Cref{lem:cost}. Equality $(b)$ follows from Fubini-Tonelli which holds because, $s \mapsto 1$ is a positive function and $(\mathbb{R},dF_{D_{r:\numS}})$ and $(\mathbb{R},dx)$ are complete, $\sigma$-finite measure spaces. 
%\newob{Where do you use the fact that the expectation is finite?}. 
Moreover, $(c)$ holds because the cumulative distribution function of  $D_{r:\numS}$ satisfies
\begin{equation*}
F_{D_{r:\numS}}(x) = B_{r,\numS} (F(x)).
\end{equation*}
Therefore, one can derive the desired expression by decomposing the performance of $\mix$ as follows.
\begin{align*}
\mathcal{C} \left( \mix, F, \numS \right) &= \mathbb{E}_{\mathbf{D}_1^\numS \sim F}\left [ \mathbb{E}_{x \sim {\mix \left( \mathbf{D}_{1}^\numS \right)} } \left[  c_F(x) \right] \right] \stackrel{(a)}{=} \sum_{i=1}^{\numS} \lambda_i \mathbb{E}_{\mathbf{D}_1^\numS \sim F}\left [  c_F(D_{i:\numS})  \right]  = \sum_{i=1}^{\numS} \lambda_i \mathbb{E}_{ x \sim F_{D_{i:\numS} }} [c_F(x)],
\end{align*}
where $(a)$ follows from law of total expectation conditioning on the value of $\mix \left( \textbf{D}_1^{\numS} \right)$.

Next, we analyze  $\opt(F)$. Using \Cref{lem:cost} we can rewrite the optimal cost as 
\begin{align*}
\opt(F) &= c_F(x^*_F)\\
&= b(\mathbb{E}_F[D] - x^*_F) + (b+h) \int_{0}^{x^*_F} F(y)dy\\
&= b \left( \int_0^{x^*_F} \bar{F}(y)dy + \int_{x^*_F}^\infty \bar{F}(y)dy  - x^*_F \right) + (b+h) \int_{0}^{x^*_F} F(y)dy\\
&= b \left(\int_{x^*_F}^\infty \bar{F}(y)dy -  \int_0^{x^*_F} F(y)dy \right) + (b+h) \int_{0}^{x^*_F} F(y)dy\\
&= b\int_{x^*_F}^\infty \bar{F}(y)dy + h \int_{0}^{x^*_F} F(y)dy\\
&=(b+h) \left[ q\int_{x^*_F}^\infty \bar{F}(y)dy + (1-q) \int_{0}^{x^*_F} F(y)dy \right]\\
&= (b+h) \int_0^{\infty}   (1-q)F(y) \mathbbm{1}\{y < x^*_F\} + q(1-F(y)) \mathbbm{1}\{y \geq x^*_F\} dy\\
&\stackrel{(a)}{=} (b+h) \int_0^{\infty}   (1-q)F(y) \mathbbm{1}\{F(y) < q\} + q(1-F(y)) \mathbbm{1}\{F(y) \geq q\} dy\\
&= (b+h) \int_0^{\infty}   \min\{(1-q)F(y) , q(1-F(y)) \} dy.
\end{align*}
$(a)$ holds by definition of $x^*_F$.
\end{proof}

\begin{proof}[\textbf{Proof of \Cref{thm:main_mixture}}.]

\textbf{Step 1.} For any mixture of order statistics policy $\pi^{\bm{\lambda}}$, by plugging the simplified expressions of $\mathcal{C} \left( \mix, F,\numS \right)$ and $\opt(F)$ computed in \Cref{lem:quantile_policy_cost} in the epigraph formulation derived in \Cref{lem:epigraph_form}, we obtain that problem \eqref{eq:ratio} is equivalent to,
\begin{subequations}
\label{eq:whole_class_pb_2}
\begin{alignat}{2}
 &\! \inf_{z \in \mathbb{R}  }        &\qquad& z \\
&\text{s.t.} &      & \sup_{F \in \cal{F}} \int_0^\infty \Psi_z^{\bm{ \lambda}}(F(y)) dy \leq 0.
\end{alignat}
\end{subequations}
where $\Psi_z^{\bm{ \lambda}}: [0,1] \rightarrow \mathbb{R}$ is such that for every $x \in [0,1]$,
\begin{equation*}
\Psi_z^{\bm{ \lambda}}(x) =  \sum_{i=1}^{\numS} \lambda_i \left[ (1- B_{i,\numS} (x) ) (x-q)  + q(1-x) - (z+1)\min\{ (1-q)x, q(1-x)\} \right].
\end{equation*}

\textbf{Step 2.} We next aim to further simplify Problem \eqref{eq:whole_class_pb_2}. To that end, we establish the following equivalence
\begin{equation}
\label{eq:space_reduction}
\sup_{F \in \cal{F}} \int_0^\infty \Psi_z^{\bm{ \lambda}}(F(y)) dy \leq 0  \quad \mbox{if and only if } \quad   \sup_{\alpha \in (0,1)}\Psi_z^{\bm{ \lambda}}(\alpha) \leq 0 .
\end{equation}
First assume that $\sup_{\alpha \in (0,1)} \Psi_z^{\bm{ \lambda}}(\alpha) \leq 0$. Noting that $\Psi_z^{\bm{ \lambda}}(\cdot)$ is continuous on $[0,1]$, we also have  $\sup_{\alpha \in [0,1]} \Psi_z^{\bm{ \lambda}}(\alpha) \leq 0$. In such a case, for all $F \in \mathcal{F}$, since $F(y) \in[0,1]$, we have that $\Psi_z^{\bm{ \lambda}}(F(y)) \leq 0 $ for all $y \ge0$ and it directly follows that
\begin{equation*}
\int_0^\infty \Psi_z^{\bm{ \lambda}}(F(y)) dy \leq 0.
\end{equation*}
Conversely, suppose that $\sup_{F \in \cal{F}} \int_0^\infty \Psi_z^{\bm{ \lambda}}(F(y)) dy \leq 0$. Note that for any $z \in \mathbb{R}$, $\Psi_z^{\bm{ \lambda}}(\cdot)$ is continuous on $[0,1]$ and therefore, it achieves its maximum on $[0,1]$. Let $\alpha^* \in \argmax_{\alpha \in [0,1]} \Psi_z^{\bm{ \lambda}}(\alpha)$.
Let $G$ be defined by, 
\begin{equation*}
G(x) = \begin{cases}
0 \qquad \text{if $x < 0$}\\
\alpha^* \qquad \text{if $x \in [0,1)$}\\
1 \qquad \text{if $x \ge 1$}
\end{cases}
\end{equation*}
In turn we have
\begin{equation*}
\sup_{\alpha \in (0,1)} \Psi_z^{\bm{ \lambda}}(\alpha) =  \sup_{\alpha \in [0,1]} \Psi_z^{\bm{ \lambda}}(\alpha) = \Psi_z^{\bm{ \lambda}}(\alpha^*) = \int_0^1 \Psi_z^{\bm{ \lambda}}(G(y))dy \stackrel{(a)}{\leq} \sup_{F \in \cal{F}} \int_0^\infty \Psi_z^{\bm{ \lambda}}(F(y)) dy \leq 0,
\end{equation*}
where $(a)$ holds because $G \in \mathcal{F}$. As a consequence, \eqref{eq:space_reduction}  holds.

Furthermore, note that \eqref{eq:space_reduction} implies that problem \eqref{eq:whole_class_pb_2} is equivalent to,
\begin{subequations}\label{eq:whole_class_pb_3}
\begin{alignat}{2}
&\! \inf_{z \in \mathbb{R} }        &\quad& z \\
&\text{s.t.} &      &  \sup_{\alpha \in (0,1)} \sum_{i=1}^{\numS}  \lambda_i \left[  (1- B_{i,\numS}(\alpha))(\alpha - q) ) + q(1- \alpha) \right] - (z+1) \min \left\{ (1-q) \alpha, q(1-\alpha) \right\} \leq 0.
\end{alignat}
\end{subequations}
Remark that \eqref{eq:whole_class_pb_3} is the epigraph formulation of
\begin{equation*}
 \sup_{\alpha \in (0,1)} \sum_{i=1}^{\numS} \lambda_i \left[ \frac{ (1- B_{i,\numS}(\alpha))(\alpha - q) + q(1- \alpha) } { \min \left\{ (1-q) \alpha, q(1-\alpha) \right\}} - 1 \right].
\end{equation*}
Hence, by equivalence between \eqref{eq:whole_class_pb} and \eqref{eq:whole_class_pb_3} we conclude that,
\begin{equation*}
\sup_{F \in \mathcal{F}} \ratio_\numS \left( \pi^{\bm{\lambda}}, F \right) =  \sup_{\alpha \in (0,1)} \sum_{i=1}^{\numS} \lambda_i \left[ \frac{ (1- B_{i,\numS}(\alpha))(\alpha - q) + q(1- \alpha) } { \min \left\{ (1-q) \alpha, q(1-\alpha) \right\}} - 1 \right].
\end{equation*}

For the last step of the proof, we use the following lemma (whose proof is deferred to \Cref{apx:sec_body}), which establishes that the worst-case computed above is achieved by a Bernoulli distribution.
\begin{lemma}
\label{lem:OS_vs_bern}
For any $r \in \{1 \ldots, \numS\}$ and $\alpha \in [0,1]$,
\begin{equation*}
\ratio_{\numS} \left( \OS{r} , \mathcal{B} \left( 1 - \alpha \right)  \right) = \frac{ (1- B_{r,\numS}(\alpha))(\alpha - q) + q(1- \alpha) } { \min \left\{ (1-q) \alpha, q(1-\alpha) \right\}} - 1.
\end{equation*}
\end{lemma}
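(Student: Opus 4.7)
The plan is to apply \Cref{lem:quantile_policy_cost} directly to the singleton mixture $\pi^{\bm{\lambda}}$ with $\lambda_r = 1$ (which is exactly $\OS{r}$) against the specific distribution $\mathcal{B}(1-\alpha)$, and then simplify the two integral expressions for this Bernoulli case. The proof should be quite short since all the heavy lifting has been done in \Cref{lem:quantile_policy_cost}.

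First, I would record the cdf of the Bernoulli distribution $\mathcal{B}(1-\alpha)$: since this distribution puts mass $\alpha$ on $0$ and mass $1-\alpha$ on $1$, its cdf $F$ satisfies $F(y) = \alpha$ for $y \in [0,1)$ and $F(y) = 1$ for $y \geq 1$. Next, I would plug this cdf into the two formulas from \Cref{lem:quantile_policy_cost}. On the interval $[1,\infty)$ we have $F(y) = 1$, so $(1-B_{r,n}(1))(1-q) + q(1-1) = 0$ since $B_{r,n}(1) = 1$, and likewise $\min\{(1-q)\cdot 1,\, q\cdot 0\} = 0$; hence the integrand in both the policy cost and the oracle cost vanishes on $[1,\infty)$. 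On the interval $[0,1)$, the integrands are constant in $y$ with value $(1-B_{r,n}(\alpha))(\alpha-q) + q(1-\alpha)$ and $\min\{(1-q)\alpha,\, q(1-\alpha)\}$ respectively, and the length of this interval is exactly $1$.

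Therefore, by \Cref{lem:quantile_policy_cost} applied with $\bm{\lambda} = \mathbf{e}_r$ (so $\pi^{\bm{\lambda}} = \OS{r}$),
\begin{equation*}
\mathcal{C}\!\left(\OS{r}, \mathcal{B}(1-\alpha), n\right) = (\cu+\co)\left[(1 - B_{r,n}(\alpha))(\alpha - q) + q(1-\alpha)\right],
\end{equation*}
and
\begin{equation*}
\opt(\mathcal{B}(1-\alpha)) = (\cu+\co)\min\{(1-q)\alpha,\, q(1-\alpha)\}.
\end{equation*}
Taking the ratio, the factor $(\cu+\co)$ cancels, and the definition $\ratio_n(\pi,F) = \mathcal{C}(\pi,F,n)/\opt(F) - 1$ yields exactly the claimed identity.

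There is essentially no obstacle here — this is a direct corollary of \Cref{lem:quantile_policy_cost}. The only minor point of care is the degenerate case where $\opt(\mathcal{B}(1-\alpha)) = 0$, which happens when $\alpha \in \{0,1\}$ (so the distribution is a point mass); in these cases the convention stated in the problem formulation (ratio equal to $0$ whenever both numerator and denominator vanish) applies, and the stated identity should be interpreted accordingly. Otherwise the formula holds verbatim on $\alpha \in (0,1)$, which is all that is needed in the subsequent application within the proof of \Cref{thm:main_mixture}.
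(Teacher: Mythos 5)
Your proof is correct, and it takes a genuinely different route from the paper's. The paper's own proof works from first principles: it writes out $c_{F_\alpha}(D_{r:\numS}) = \alpha \co D_{r:\numS} + (1-\alpha)\cu(1 - D_{r:\numS})$ directly (using that $D_{r:\numS} \in \{0,1\}$ a.s.\ under a Bernoulli demand), applies $\mathbb{E}[D_{r:\numS}] = 1 - B_{r,\numS}(\alpha)$, and then splits into the two cases $\alpha \le q$ and $\alpha > q$ to identify $x^*_{F_\alpha}$ and $\opt(F_\alpha)$ explicitly, reassembling the $\min$ at the end. You instead specialize \Cref{lem:quantile_policy_cost} to the singleton weight vector and the Bernoulli cdf, observe that the integrand vanishes on $[1,\infty)$ and is constant on $[0,1)$, and read off both $\mathcal{C}$ and $\opt$ in one stroke. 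Your route is shorter and avoids the case split because the $\min$ already appears in the general oracle formula; the paper's route is self-contained (it does not rely on \Cref{lem:quantile_policy_cost}, so the two results can be verified independently). You are also right to flag the degenerate endpoints $\alpha \in \{0,1\}$: there $\opt$ vanishes and the stated convention makes the ratio $0$, a point the paper's proof leaves implicit as well.
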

 This completes the proof.
\end{proof}

%%%%%%%%%%%%%%%%%%%%%%%%%%%%%%%%%%%%%%%%%%%%%%%%%%%%%%
%%%%%%%%%%%%%%%%%%%%%%%%%%%%%%%%%%%%%%%%%%%%%%%%%%%%%%

\setcounter{equation}{0}
\setcounter{proposition}{0}
\setcounter{lemma}{0}
\setcounter{theorem}{0}

\section{Proofs for Section \ref{sec:optimal}}\label{apx:section_opt}

\begin{proof}[\textbf{Proof of \Cref{thm:minimax_reduction}}]
Fix $\numS \geq 1$. It is easy to see that
\begin{equation*}
\inf_{\pi \in \Pi_\numS} \sup_{F \in \mathcal{F}} \ratio_{\numS} \left( \pi, F \right) \leq  \inf_{\mix \in \setOS} \sup_{F \in \mathcal{F}} \ratio_{\numS} \left( \mix, F \right).
\end{equation*}
We now prove that
\begin{equation*}
\inf_{\pi \in \Pi_\numS} \sup_{F \in \mathcal{F}} \ratio_{\numS} \left( \pi, F \right) \geq  \inf_{\mix \in \setOS} \sup_{F \in \mathcal{F}} \ratio_{\numS} \left( \mix, F \right).
\end{equation*}
To do so, we claim that we only need to show that mixture of order statistics policies are optimal when reducing the space of distributions to Bernoulli ones. Formally, we need to show that
\begin{equation}
\label{eq:equivalent_minimax}
\inf_{\pi \in \Pi_\numS} \sup_{ \mu \in [0,1]} \ratio_{\numS} \left( \pi, \mathcal{B} \left( \mu \right) \right) = \inf_{\mix \in \setOS} \sup_{ \mu \in [0,1] } \ratio_{\numS} \left( \mix, \mathcal{B} \left( \mu \right) \right).
\end{equation}
Indeed, assuming that \eqref{eq:equivalent_minimax} holds, one concludes the proof by remarking that,
\begin{align*}
\inf_{\pi \in \Pi_\numS} \sup_{F \in \mathcal{F}} \ratio_{\numS} \left( \pi, F \right) &\geq \inf_{\pi \in \Pi_\numS} \sup_{ \mu \in [0,1]} \ratio_{\numS} \left( \pi, \mathcal{B} \left( \mu \right) \right)\\
&\stackrel{(a)}{=} \inf_{\mix \in \setOS} \sup_{ \mu \in [0,1] } \ratio_{\numS} \left( \mix, \mathcal{B} \left( \mu \right) \right)\\
&\stackrel{(b)}{=} \inf_{\mix \in \setOS} \sup_{ F \in \mathcal{F} } \ratio_{\numS} \left( \mix, F \right),
\end{align*}
where $(a)$ would follow from \eqref{eq:equivalent_minimax} and $(b)$ is a consequence of \Cref{thm:main_mixture}.

We now prove \eqref{eq:equivalent_minimax}.

We first reduce the set of policies $\pi \in \Pi_\numS$ without loss of optimality for the following problem.
\begin{equation}
\label{eq:minimax_bern}
 \inf_{\pi \in \Pi_\numS} \sup_{ \mu \in [0,1]} \ratio_{\numS} \left( \pi, \mathcal{B} \left( \mu \right) \right)
\end{equation}
 We show that one may restrict attention to policies such that the support of the distribution of inventory is included in the interval defined by the smallest observed demand and the largest one.  Formally, the following result in proved in \Cref{apx:sec_body}.
\begin{lemma}
\label{lem:support}
For any policy $\pi \in \Pi_\numS$  there exists a policy $\pi' \in \Pi_\numS$ with a lower cost such that for every $\mathbf{D}_1^{\numS} \in \{0,1\}^{\numS}$, the support of $\pi' \left( \mathbf{D}_1^{\numS} \right)$ is a subset of $[D_{1:\numS},D_{\numS:\numS}]$.
\end{lemma}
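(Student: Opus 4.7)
My plan is to construct $\pi'$ explicitly by a pointwise projection and then verify the cost comparison in the worst-case sense. For every $\mathbf{D}_1^\numS \in \{0,1\}^\numS$, I would let $\pi'(\mathbf{D}_1^\numS)$ be the pushforward of $\pi(\mathbf{D}_1^\numS)$ under the projection map $\phi_\mathbf{D}(x) := \max\{D_{1:\numS}, \min\{x, D_{\numS:\numS}\}\}$; for $\mathbf{D}_1^\numS \notin \{0,1\}^\numS$ I would leave $\pi'(\mathbf{D}_1^\numS) = \pi(\mathbf{D}_1^\numS)$. By construction $\pi'$ satisfies the required support condition.

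The key structural observation I would exploit is that for $F = \mathcal{B}(\mu)$, the cost function $c_F(\cdot)$ is convex and piecewise linear with kinks only at $0$ and $1$: the slopes on $(-\infty,0)$, $[0,1]$, and $(1,\infty)$ are $-\cu\mu$, $\co(1-\mu) - \cu\mu$, and $\co$ respectively. Thus the minimum of $c_F$ is always attained at an endpoint of $[0,1]$ (at $0$ when $\mu \le 1-q$, at $1$ when $\mu \ge 1-q$). Consequently, for any $x \in \mathbb{R}_+$, one has $c_F(\min\{1,x\}) \le c_F(x)$, which immediately yields a pointwise per-sequence cost reduction whenever $\mathbf{D}_1^\numS$ contains both a $0$ and a $1$, since then $[D_{1:\numS}, D_{\numS:\numS}] = [0,1]$ and the projection simply caps $x$ at $1$.

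The main obstacle is handling the two degenerate sequences $\mathbf{0}$ and $\mathbf{1}$, for which the projection collapses the inventory to a single point; here the per-sequence pointwise comparison against $\mathcal{B}(\mu)$ can fail for some $\mu$ (for instance, replacing $\pi(\mathbf{0})$ with $\delta_0$ strictly increases the per-sequence cost against $\mathcal{B}(\mu)$ for $\mu > 1-q$, since then the minimizer of $c_F$ on $[0,1]$ is $1$, not $0$). I would close this gap by first invoking the linearity of $c_{\mathcal{B}(\mu)}$ on $[0,1]$ to replace any inventory distribution supported in $[0,1]$ by a Dirac at its mean without changing its cost, and then carefully tracking the cost perturbations from the degenerate sequences: the per-sequence change at $\mathbf{0}$ enters weighted by $(1-\mu)^\numS$, which is exponentially small precisely in the regime $\mu > 1-q$ where the projection hurts, and symmetrically for $\mathbf{1}$, weighted by $\mu^\numS$. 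Combined with the strict pointwise improvement on mixed sequences, this should establish that the worst-case cost of $\pi'$ over $\mu \in [0,1]$ is no greater than that of $\pi$, which is what the downstream minimax argument in the proof of \Cref{thm:minimax_reduction} requires.
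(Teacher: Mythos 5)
Your construction of $\pi'$ by pushing $\pi(\mathbf{D}_1^\numS)$ forward under the projection onto $[D_{1:\numS},D_{\numS:\numS}]$ is exactly what the paper does, and for mixed sequences (where $[D_{1:\numS},D_{\numS:\numS}]=[0,1]$) your pointwise comparison $c_{\mathcal{B}(\mu)}(\phi_{\mathbf{D}}(x))\le c_{\mathcal{B}(\mu)}(x)$ is correct and mirrors the paper's argument. The gap is in your treatment of the degenerate sequences $\mathbf{0}$ and $\mathbf{1}$. Your ``tracking perturbations'' sketch cannot close the argument: take any $\pi$ whose action on every mixed sequence is already supported in $[0,1]$ --- so the projection changes nothing there --- with $\pi(\mathbf{0})=\delta_1$. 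Then for every $\mu>1-q$ you get $\mathcal{C}(\pi',\mathcal{B}(\mu),\numS)-\mathcal{C}(\pi,\mathcal{B}(\mu),\numS)=(1-\mu)^{\numS}\big[(\cu+\co)\mu-\co\big]>0$, with no compensating decrease from the mixed sequences. The fact that $(1-\mu)^{\numS}$ is small in this regime tells you nothing about where the supremum over $\mu$ of the cost, let alone of the relative regret, is attained; the claim that this ``should establish that the worst-case cost of $\pi'$ is no greater than that of $\pi$'' does not follow from the reasoning given.

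The observation you are missing is much simpler and bypasses any perturbation bookkeeping: the objective that matters downstream is worst-case \emph{relative} regret, for which the boundary points $\mu\in\{0,1\}$ are catastrophic. If the support of $\pi(\mathbf{0})$ is not $\{0\}$, then $\mathcal{C}(\pi,\mathcal{B}(0),\numS)>0$ while $\opt(\mathcal{B}(0))=0$, so $\ratio_\numS(\pi,\mathcal{B}(0))=\infty$; symmetrically, $\pi(\mathbf{1})\neq\delta_1$ forces infinite regret at $\mu=1$. Any such $\pi$ is therefore already strictly dominated in worst-case regret by the projected $\pi'$, which by construction satisfies $\pi'(\mathbf{0})=\delta_0$, $\pi'(\mathbf{1})=\delta_1$ and so has finite worst-case regret. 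This one-line observation at the boundary is precisely how the paper disposes of the degenerate sequences; the pointwise cost comparison is established only for the mixed ones.
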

Note that \Cref{lem:support} implies that $\pi' \left( \mathbf{0}_1^{\numS} \right) = 0$ and $\pi' \left( \mathbf{1}_1^{\numS} \right) = 1 $ where $\mathbf{0}_1^{\numS}$ (resp. $\mathbf{1}_1^{\numS}$) is the sequence of historical data in which all demand observations are $0$ (resp. $1$).
In turn, we leverage this result to further reduce the space of policies without loss of optimality to the $(\numS+1)$-dimensional space of sum-based policies defined as follows.

\begin{definition} (Sum-based policies)
Consider a sequence $ \mathbf{e} = \left(  e_i \right)_{i \in \{0, \ldots, \numS\}} \in [0,1]^{\numS+1} $.
We say that a policy $\DS$ is a sum-based policy if for any $ i \in \{0, \ldots, \numS\}$ and any $\mathbf{D}_1^{\numS }\in \{0,1\}^{\numS} $, such that $\sum_{j=1}^\numS D_j = i$, we have that,
\begin{equation*}
\DS \left( \mathbf{D}_1^\numS \right) = e_i.
\end{equation*}
\end{definition}

Let $\pi \in \Pi_\numS$ be a policy which support is included in the interval defined by the smallest observed demand and the largest one. By \Cref{lem:support} this restriction is without loss of optimality. We construct a sum-based policy that ensures the same cost as $\pi$ against any Bernoulli distribution. Define for every $i \in \{ 0 , \ldots, \numS \}$ the set $\mathcal{D}^i_\numS$ as
\begin{equation*}
\mathcal{D}^i_\numS:= \left \{ \mathbf{D}_1^{\numS} \in \{0,1\}^{\numS} \, \Big \vert \, \sum_{j=1}^{\numS} D_j = i \right \}.
\end{equation*}
Moreover, consider the sequence $ \mathbf{e} = \left(  e_i \right)_{i \in \{0, \ldots, \numS\}} \in [0,1]^{\numS+1} $ such that for every $i \in \{0 ,\ldots, \numS\}$ 
\begin{equation*}
e_i = \frac{1}{| \mathcal{D}^i_\numS |} \sum_{\mathbf{D}_{1}^\numS \in  \mathcal{D}^i_\numS}  \mathbb{E}_{x \sim { \pi \left( \mathbf{D}_{1}^\numS \right)} } \left[ x  \right].
\end{equation*}
By \Cref{lem:support} we have that $e_i \in [0,1]$ for all $i \in \{0, \ldots, \numS \}$, $e_0 = 0$ and $e_\numS =1$ which implies that  $\DS$  is a well defined sum-based policy.

To ease notations, let $S_j$ denote the event $\{\sum_{i=1}^{\numS} D_i = j \}$ for every $j \in \{0, \ldots, \numS \}$.
We note that for every $\mu \in [0,1]$ the cost of the policy $\pi$ satisfies

\begin{align}
\frac{\mathcal{C} \left(\pi, \mathcal{B} \left( \mu \right) ,\numS \right)}{b+h} 
&\stackrel{(a)}{=} \frac{1}{b+h} \mathbb{E}_{\mathbf{D}_1^\numS \sim \mathcal{B} \left(\mu \right) }\left [ \mathbb{E}_{x \sim { \pi \left( \mathbf{D}_{1}^\numS \right)} } \left[  \mu b \left( 1 - x \right) + \left(1 - \mu \right) h x \right]  \right]\nonumber\\
&=  \mu \cdot  q +  \sum_{i=0}^{\numS}  \left(1 - \mu - q \right) \cdot \mathbb{E}_{\mathbf{D}_1^\numS \sim \mathcal{B} \left(\mu \right) }\left [ \mathbb{E}_{x \sim { \pi \left( \mathbf{D}_{1}^\numS \right)} } \left[ x  \right] \, \Big \vert \, S_i  \right] \cdot \mathbb{P} \left( S_i \right)  \nonumber \\
&\stackrel{(b)}{=} \mu \cdot  q +  \sum_{i=0}^{\numS}  \left(1 - \mu - q \right) \cdot  \frac{1}{| \mathcal{D}^i_\numS |} \sum_{\mathbf{D}_{1}^\numS \in  \mathcal{D}^i_\numS}  \mathbb{E}_{x \sim { \pi \left( \mathbf{D}_{1}^\numS \right)} } \left[ x  \right] \cdot \mathbb{P} \left( S_i \right) \nonumber \\
&= \mu \cdot  q +  \sum_{i=0}^{\numS}  \left(1 - \mu - q \right) \cdot  e_i  \cdot \mathbb{P} \left( S_i \right)  = \mathcal{C} \left(\DS, \mathcal{B} \left( \mu \right) ,\numS \right), \label{eq:equal_DS}
\end{align}
where $(a)$ holds because the support of $\pi \left( \mathbf{D}_1^{\numS} \right)$ is a subset of $[D_{1:\numS},D_{\numS:\numS}]$, which is included in $[0,1]$ for Bernoulli distributions and $(b)$ follows from the fact that, for Bernoulli distributions, the distribution of  $\mathbf{D}_{1}^\numS$ conditional on $\{\sum_{j=1}^{\numS} D_j = i \}$ is  that of a uniform law on $\mathcal{D}^i_\numS$.

\Cref{lem:support} along with \eqref{eq:equal_DS} imply that the minimax problem across the general set of data-driven policies is actually equivalent to a minimax problem in which the space of policies is parameterized by a $(\numS+1)$ dimensional space. Namely, we have showed that
\begin{equation}
\label{eq:initial_problem_reduction}
 \inf_{\pi \in \Pi_\numS} \sup_{ \mu \in [0,1]} \ratio_{\numS} \left( \pi, \mathcal{B} \left( \mu \right) \right) = \inf_{\substack{ \mathbf{e} \in [0,1]^{\numS+1} \\ e_0 =0, \, e_\numS =1} } \sup_{ \mu \in [0,1]} \ratio_{\numS} \left( \DS, \mathcal{B} \left( \mu \right) \right).
\end{equation}
Recall that for a policy $\DS$, $e_i$ represents the inventory prescribed by the policy after observing $i$ ones. Natural candidate policies in this space are ones for which the inventory level prescribed is increasing as a function of the number of ones observed in historical data. Our next result formalizes this idea.
\begin{lemma}
\label{lem:increasing_only}
For any $\numS \geq 1$,
\begin{equation*}
\inf_{\substack{ \mathbf{e} \in [0,1]^{\numS+1}\\ e_0 =0, \, e_\numS =1} } \sup_{ \mu \in [0,1]} \ratio_{\numS} \left( \DS, \mathcal{B} \left( \mu \right) \right) = \inf_{\substack{\mathbf{e} \in [0,1]^{\numS+1}\\ e_0 =0, \, e_\numS =1 \\ \left(e_i \right) \, \text{non-decreasing}} } \sup_{ \mu \in [0,1]} \ratio_{\numS} \left( \DS, \mathcal{B} \left( \mu \right) \right).
\end{equation*}
\end{lemma}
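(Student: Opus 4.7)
The plan is to combine the monotone likelihood ratio (MLR) property of Binomial probabilities with Sion's minimax theorem; the key insight is that for \emph{any} mixing measure over Bernoulli means, the best-responding sum-based policy is automatically non-decreasing. To begin, I would reuse the computation from the proof of \Cref{thm:minimax_reduction}: with $w_i(\mu) := \binom{\numS}{i}\mu^i(1-\mu)^{\numS-i}$ and $E(\mu) := \sum_{i=0}^{\numS} e_i w_i(\mu)$, the cost of $\DS$ against $\mathcal{B}(\mu)$ equals $(\cu+\co)[\mu q + (1-q-\mu)E(\mu)]$, whereas $\opt(\mathcal{B}(\mu)) = (\cu+\co)\min\{\mu q,\ (1-q)(1-\mu)\}$. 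Therefore
\begin{equation*}
\ratio_\numS(\DS,\mathcal{B}(\mu)) = \begin{cases} \dfrac{(1-q-\mu)E(\mu)}{\mu q}, & \mu \le 1-q, \\[1mm] \dfrac{(\mu + q - 1)(1-E(\mu))}{(1-q)(1-\mu)}, & \mu \ge 1-q, \end{cases}
\end{equation*}
which is linear in $\mathbf{e}$ and admits a bounded continuous extension to $\mu \in [0,1]$ (the apparent singularities at $0$, $1-q$, and $1$ are removable since $E(0) = 0$, $E(1) = 1$, and both pieces vanish at $\mu = 1-q$).

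Next, for every $\nu \in \Delta([0,1])$ I would show that the infimum of $\mathbb{E}_{\mu\sim\nu}[\ratio_\numS(\DS,\mathcal{B}(\mu))]$ is attained on the non-decreasing subset. Expanding the expectation yields $\sum_{i=0}^{\numS}[e_i A_i + (1-e_i) B_i]$ with
\begin{equation*}
A_i := \int_{[0,1-q]} \frac{(1-q-\mu)\,w_i(\mu)}{\mu q}\, d\nu(\mu), \qquad B_i := \int_{[1-q,1]} \frac{(\mu+q-1)\,w_i(\mu)}{(1-q)(1-\mu)}\, d\nu(\mu),
\end{equation*}
so coordinatewise minimization over $e_i \in [0,1]$ (with $e_0 = 0$ and $e_\numS = 1$ fixed) gives the best response $e_i^\star = \mathbbm{1}\{A_i \le B_i\}$ for $i \in \{1,\ldots,\numS-1\}$. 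The desired monotonicity of $\mathbf{e}^\star$ reduces to showing that $A_i/B_i$ is non-increasing in $i$, which I would establish via MLR: since $w_i(\mu)/w_{i-1}(\mu) = \tfrac{\numS-i+1}{i}\cdot\tfrac{\mu}{1-\mu}$ is strictly increasing in $\mu$, the ratio $A_i/A_{i-1}$ is a weighted average of $w_i/w_{i-1}$ over $[0,1-q]$ and is thus bounded above by its value at $\mu=1-q$, namely $\tfrac{\numS-i+1}{i}\cdot\tfrac{1-q}{q}$; symmetrically, $B_i/B_{i-1}$ is the same type of weighted average over $[1-q,1]$ and is bounded below by that same value. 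Hence $A_i/A_{i-1} \le B_i/B_{i-1}$, i.e., $A_i/B_i \le A_{i-1}/B_{i-1}$, and $\mathbf{e}^\star$ is non-decreasing.

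To conclude, I would apply Sion's minimax theorem twice. The set $X := \{\mathbf{e}\in[0,1]^{\numS+1}: e_0 = 0,\ e_\numS = 1\}$ and its non-decreasing subset $X_{\mathrm{nd}}$ are compact convex in $\mathbb{R}^{\numS+1}$, and $\Delta([0,1])$ is convex (compact in the weak-$*$ topology). The functional $(\mathbf{e},\nu) \mapsto \mathbb{E}_{\nu}[\ratio_\numS(\DS,\mathcal{B}(\mu))]$ is linear (hence quasi-convex and quasi-concave) and continuous in each argument, so Sion's theorem, combined with the identity $\sup_\mu \ratio_\numS(\DS,\mathcal{B}(\mu)) = \sup_{\nu}\mathbb{E}_\nu[\ratio_\numS(\DS,\mathcal{B}(\mu))]$ (attained at Dirac measures), gives
\begin{equation*}
\inf_{\mathbf{e} \in X} \sup_{\mu \in [0,1]} \ratio_\numS(\DS,\mathcal{B}(\mu)) = \sup_{\nu \in \Delta([0,1])}\inf_{\mathbf{e}\in X}\mathbb{E}_\nu[\ratio_\numS(\DS,\mathcal{B}(\mu))]
\end{equation*}
and the analogous identity with $X$ replaced by $X_{\mathrm{nd}}$. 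By the previous paragraph, the two inner infima coincide for every $\nu$, so the two outer quantities match, delivering the desired equality. The main hurdle will be the MLR monotonicity of $A_i/B_i$; the crucial structural fact enabling it is that the split point of the two integration regions sits precisely at $\mu = 1-q$, so that the single value $w_i(1-q)/w_{i-1}(1-q)$ serves simultaneously as the required upper bound for the average over $[0,1-q]$ and the required lower bound over $[1-q,1]$.
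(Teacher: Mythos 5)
Your proposal is correct in outline but follows a genuinely different route from the paper. The paper's proof of \Cref{lem:increasing_only} is a direct pointwise-dominance argument: given any $\mathbf{e}$ with an inversion $e_j > e_{j+1}$, it replaces both entries by the particular average $f_j = f_{j+1} = \bigl(\tfrac{q}{\numS-j}e_j + \tfrac{1-q}{j}e_{j+1}\bigr)/\bigl(\tfrac{q}{\numS-j} + \tfrac{1-q}{j}\bigr)$ and verifies by a direct computation that the modified policy has weakly lower cost against \emph{every} Bernoulli distribution, then iterates. You instead dualize: you apply Sion's minimax theorem to swap $\inf_{\mathbf{e}}\sup_\nu$ and $\sup_\nu\inf_{\mathbf{e}}$ on both $X$ and $X_{\mathrm{nd}}$, and show via the monotone likelihood ratio property of the Binomial weights $w_i(\mu)$ that the coordinatewise best response to \emph{any} prior $\nu$ is automatically a non-decreasing $0/1$ step sequence, so the inner infima coincide. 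Your MLR observation --- that the single value $w_i(1-q)/w_{i-1}(1-q)$ simultaneously upper-bounds the weighted average of $w_i/w_{i-1}$ over $[0,1-q]$ and lower-bounds it over $[1-q,1]$ --- is a clean and correct key step. The paper's proof is more elementary (no minimax theorem) and gives the stronger conclusion of uniform (pointwise-in-$\mu$) dominance, whereas yours establishes only equality of the minimax values but is structurally closer to the saddle-point machinery the paper uses later for \Cref{prop:prior} and \Cref{thm:optimal_min_max}.

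Two technical points would need tightening. First, the relative regret $\ratio_\numS(\DS,\mathcal{B}(\mu))$ as actually defined is $0$ at $\mu\in\{0,1\}$ (by the paper's convention for $\opt(F)=0$), whereas your formula gives the removable-singularity limit; you should explicitly work with the continuous extension $\tilde\ratio$ and note that $\sup_{[0,1]}\tilde\ratio=\sup_{[0,1]}\ratio$, since continuity of $\nu\mapsto\mathbb{E}_\nu[\tilde\ratio]$ in the weak-$*$ topology is what Sion's theorem requires. Second, the coefficients $A_0$ (when $\nu(\{0\})>0$) and $B_\numS$ (when $\nu(\{1\})>0$) can be $+\infty$, and $A_i$ or $B_i$ can vanish, making the ratio comparisons $A_i/A_{i-1}\le B_i/B_{i-1}$ degenerate; fortunately the monotonicity is only needed for the free coordinates $i\in\{1,\ldots,\numS-1\}$ (the endpoints $e_0=0$, $e_\numS=1$ are automatically consistent with monotonicity), and for $i\in\{2,\ldots,\numS-1\}$ the boundary masses only strengthen the inequalities in the right direction, so these cases can be handled, but the argument should say so explicitly.
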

The proof is deferred to \Cref{apx:sec_body}.

The last step of our proof consists in showing that the performance of any policy $\DS \in \Pi_\numS^{DS}$ such that $e_0 =0$, $e_\numS =1$ and $(e_i)_{i \in \{0,\ldots,\numS \}}$ is non-decreasing, can be reproduced by a mixture of order statistics policy. Consider a sequence $(e_i)_{i \in \{0,\ldots,\numS \}}$ satisfying these assumptions and define the vector of probabilities $\bm{\lambda}$ such that for all $i \in \{1,\ldots, \numS\}$,
\begin{equation*}
\lambda_i = e_{n-i+1} - e_{n-i}.
\end{equation*}
Note that for all $i \in \{1,\ldots, \numS\}$, $\lambda_i \geq 0$ by monotonicity of $(e_i)_{i \in \{0,\ldots,\numS \}}$ and $\sum_{i =1}^{\numS} \lambda_i = e_{\numS} - e_0 = 1$. Hence $\bm{\lambda}$ is a well defined probability vector. We now show that the mixture of order statistics policy $\mix$ incurs the same cost as $\DS$ against any Bernoulli distribution. Let $\mu \in [0,1]$, then the cost of $\mix$ is
\begin{align*}
\frac{\mathcal{C} \left(\mix, \mathcal{B} \left( \mu \right) ,\numS \right)}{b+h} &=  \mu \cdot  q +  \sum_{i=0}^{\numS}  \left(1 - \mu - q \right) \cdot \mathbb{E}_{\mathbf{D}_1^\numS \sim \mathcal{B} \left(\mu \right) }\left [ \mathbb{E}_{x \sim { \mix \left( \mathbf{D}_{1}^\numS \right)} } \left[ x  \right] \, \Big \vert \, S_i\right] \cdot \mathbb{P} \left( S_i \right)  \nonumber\\
&\stackrel{(a)}{=}  \mu \cdot  q +  \sum_{i=0}^{\numS}  \left(1 - \mu - q \right) \cdot \sum_{k=1}^{\numS} \lambda_k \cdot \mathbb{E}_{\mathbf{D}_1^\numS \sim \mathcal{B} \left(\mu \right) }\left [ D_{k:\numS} \, \Big \vert \, S_i \right] \cdot \mathbb{P} \left( S_i \right) \nonumber\\
& \stackrel{(b)}{=} \mu \cdot  q +  \sum_{i=0}^{\numS}  \left(1 - \mu - q \right) \cdot \mathbb{P} \left( S_i \right) \cdot \sum_{k=n-i+1}^{\numS} \lambda_k    \nonumber\\
&= \mu \cdot  q +  \sum_{i=0}^{\numS}  \left(1 - \mu - q \right) \cdot \mathbb{P} \left( S_i \right) \cdot  e_i   = \mathcal{C} \left(\DS, \mathcal{B} \left( \mu \right), \numS \right),
\end{align*}
where $(a)$ holds because $\mix$ prescribes $D_{k:\numS}$ with probability $\lambda_k$ for any $k \in \{1,\ldots,\numS\}$ and $(b)$ follows from the fact that for every $k \in \{1,\ldots,\numS\}$,
\begin{equation*}
D_{k:\numS} = \begin{cases}
0 \qquad &\text{a.s. if $\sum_{j=1}^{\numS} D_j \leq n-k$} \\
1 \qquad &\text{a.s. if $\sum_{j=1}^{\numS} D_j \geq n-k+1$}. 
\end{cases}
\end{equation*}
As a consequence, 
\begin{equation}
\label{eq:non_decreasingDS}
\inf_{\substack{ \mathbf{e} \in [0,1]^{\numS+1}\\ e_0 =0, \, e_\numS =1 \\ \left(e_i \right) \, \text{non-decreasing}} } \sup_{ \mu \in [0,1]} \ratio_{\numS} \left( \DS, \mathcal{B} \left( \mu \right) \right) \geq \inf_{\mix \in \Pi_{\numS}^{OS}} \sup_{\mu \in [0,1]} \ratio_{\numS} \left( \mix, \mathcal{B} \left( \mu \right) \right).
\end{equation}
We finally conclude that,
\begin{align*}
 \inf_{\pi \in \Pi_\numS} \sup_{ \mu \in [0,1]} \ratio_{\numS} \left( \pi, \mathcal{B} \left( \mu \right) \right) &\stackrel{(a)}{=} \inf_{\substack{ \mathbf{e} \in [0,1]^{\numS+1}\\ e_0 =0, \, e_\numS =1} } \sup_{ \mu \in [0,1]} \ratio_{\numS} \left( \DS, \mathcal{B} \left( \mu \right) \right) \\
 &\stackrel{(b)}{=} \inf_{\substack{ \mathbf{e} \in [0,1]^{\numS+1}\\ e_0 =0, \, e_\numS =1 \\ \left(e_i \right) \, \text{non-decreasing}} } \sup_{ \mu \in [0,1]} \ratio_{\numS} \left( \DS, \mathcal{B} \left( \mu \right) \right) \\
 & \stackrel{(c)}{\geq} \inf_{\mix \in \Pi_{\numS}^{OS}} \sup_{\mu \in [0,1]} \ratio_{\numS} \left( \mix, \mathcal{B} \left( \mu \right) \right),
\end{align*}
where $(a)$ holds by \eqref{eq:initial_problem_reduction}, $(b)$ follows from \Cref{lem:increasing_only} and $(c)$ is a consequence of \eqref{eq:non_decreasingDS}.
This completes the proof.

\end{proof}

\begin{proof}[\textbf{Proof of \Cref{prop:degen_opt}}]
Assume that,
\begin{equation}
\label{eq:imbalance_OS1}
\sup_{\mu \in [0,1-q]} \ratio_\numS \left( \OS{1}, \mathcal{B} \left( \mu \right) \right) > \sup_{\mu \in [1-q,q]} \ratio_\numS \left( \OS{1}, \mathcal{B} \left( \mu \right) \right).
\end{equation}
We show that $\OS{1}$ is an optimal mixture of order statistics policy. Note that for every $r \in \{2, \ldots, \numS\}$ we have
\begin{align*}
\sup_{\mu \in [0,1]} \ratio_\numS \left( \OS{1}, \mathcal{B} \left( \mu \right) \right) 
&\stackrel{(a)}{=} \sup_{\mu \in [0,1-q]} \ratio_\numS \left( \OS{1}, \mathcal{B} \left( \mu \right) \right)\\
&\stackrel{(b)}{<}  \sup_{\mu \in [0,1-q]} \ratio_\numS \left( \OS{r}, \mathcal{B} \left( \mu \right) \right) \leq \sup_{\mu \in [0,1]} \ratio_\numS \left( \OS{r}, \mathcal{B} \left( \mu \right) \right),
\end{align*}
where $(a)$ follows from \eqref{eq:imbalance_OS1} and $(b)$ holds by \Cref{lem:monotonic_phi} stated and proved in \Cref{app:add_aux}.

In turn, for every $\mix \in \Pi_\numS^{OS}$ such that $\lambda_1 <1$, we have that,
\begin{align*}
\sup_{\mu \in [0,1]} \ratio_\numS \left( \mix, \mathcal{B} \left( \mu \right) \right) = \sup_{\mu \in [0,1]} \sum_{i=1}^{\numS} \lambda_i \ratio_\numS \left( \OS{i}, \mathcal{B} \left( \mu \right) \right) > \sup_{\mu \in [0,1]} \ratio_\numS \left( \OS{1}, \mathcal{B} \left( \mu \right) \right). 
\end{align*}
As a consequence, $\OS{1}$ is optimal and satisfies,
\begin{equation*}
\ratio_{\numS}^{*} = \inf_{ \mix \in \Pi_\numS^{OS}}  \sup_{\mu \in [0,1]} \ratio_{\numS} \left( \pi^{\bm{\lambda}}, \mathcal{B} \left(  \mu\right) \right) = \sup_{\mu \in [0,1]} \ratio_{\numS} \left( \OS{1}, \mathcal{B} \left(  \mu\right) \right).
\end{equation*}

Similarly, assuming that \eqref{eq:extreme_n} does not hold, we show by a similar argument that $\OS{\numS}$ is optimal for Problem \eqref{eq:minmax}.
\end{proof}

\begin{proof}[\textbf{Proof of \Cref{prop:necessary}}]

Suppose first that
\begin{equation} \label{eq:cond1}
\sup_{\mu \in [0,1-q]} \ratio_\numS \left( \pi^{\bm{\lambda}}, \mathcal{B} \left( \mu \right) \right) > \sup_{\mu \in [1-q,1]} \ratio_\numS \left( \pi^{\bm{\lambda}}, \mathcal{B} \left( \mu \right) \right).
\end{equation}
In such a case, we show that there exists an alternative policy with strictly lower worst-case performance.

We first argue that $\bm{\lambda}$ must be such that $\lambda_1 < 1$. Indeed, note that, by assumption, we have 
\begin{equation} \label{eq:cond2}
\sup_{\mu \in [0,1-q]} \ratio_\numS \left( \OS{1}, \mathcal{B} \left( \mu \right) \right) \leq \sup_{\mu \in [1-q,1]} \ratio_\numS \left( \OS{1}, \mathcal{B} \left( \mu \right) \right),
\end{equation}
The conjunction of \eqref{eq:cond1} and \eqref{eq:cond2} implies that $\lambda_1 < 1$.

Next we argue that the policy $\OS{1}$ is strictly better than $\mix$ if $\mu \in [0,1-q]$. We have
\begin{align}
 \sup_{\mu \in [0,1-q]} \ratio_\numS \left( \pi^{\bm{\lambda}}, \mathcal{B} \left( \mu \right) \right) 
 &= \sup_{\mu \in [0,1-q]} \sum_{i=1}^{\numS} \lambda_i \ratio_\numS \left( \OS{i}, \mathcal{B} \left( \mu \right) \right) \nonumber \\
&\stackrel{(a)}{>}  \sup_{\mu \in [0,1-q]} \sum_{i=1}^{\numS} \lambda_i \ratio_\numS \left( \OS{1}, \mathcal{B} \left( \mu \right) \right) \nonumber \\
 &=  \sup_{\mu \in [0,1-q]} \ratio_\numS \left( \OS{1}, \mathcal{B} \left( \mu \right) \right). \label{eq:strict_improvement}
\end{align}
where $(a)$ follows from the fact that $\lambda_1 <1$, together with \Cref{lem:monotonic_phi} stated and proved in \Cref{app:add_aux}.

Next, we construct an explicit policy that improves upon $\mix$. 

For any $\nu \in [0,1]$, consider the policy $\tilde{\pi}_{\nu}$ which chooses the policy $\OS{1}$ with probability $\nu$ and the policy $\pi^{\bm{\lambda}}$ with probability $1-\nu$. Remark that $\tilde{\pi}_{\nu}$ is a mixture of order statistics policy and for any $F \in \mathcal{F}$,
\begin{equation*}
\ratio_\numS \left( \tilde{\pi}_{\nu}, F \right) = \nu \cdot \ratio_\numS \left( \OS{1}, F \right) + \left( 1- \nu \right) \cdot \ratio_\numS \left( \pi^{\bm{\lambda}}, F \right).
\end{equation*}

Define the mapping $L$ from $[0,1]$ to $\mathbb{R}$ such that,
\begin{equation*}
L : \nu \mapsto \sup_{ \mu \in [0,1-q]} \ratio_\numS \left( \tilde{\pi}_{\nu}, \mathcal{B} \left( \mu \right) \right) - 
\sup_{ \mu \in [1-q,1]} \ratio_\numS \left( \tilde{\pi}_{\nu}, \mathcal{B} \left( \mu \right) \right).
\end{equation*}
We first show that $L$ is continuous. Remark that it is sufficient to show that, the following mapping $g$ is continuous.
\begin{equation*}
g : \nu \mapsto \sup_{ \mu \in [0,1-q]}  f (\nu, \mu),
 \end{equation*}
where  $f (\nu, \mu):= \ratio_\numS \left( \tilde{\pi}_{\nu}, \mathcal{B} \left( \mu \right) \right)$. First remark that by \Cref{lem:OS_vs_bern}, the mapping $\mu \mapsto \ratio_\numS \left( \pi^{\bm{\lambda}'} , \mathcal{B} \left(\mu \right) \right)$ is continuous for every mixture of order statistic  $\pi^{\bm{\lambda}'}$. Hence, $f$ is continuous in its second component. Moreover, $f$ is affine in its first component, and $f \left( \cdot, \mu \right)$ is $M$- Lipschitz  for every $\mu \in [0,1-q]$, where $M := \sup_{ \mu \in [0,1-q]} | \ratio_\numS \left( \mix , \mathcal{B} \left(\mu \right) \right) - \ratio_\numS \left( \OS{1} , \mathcal{B} \left(\mu \right) \right)|$. Remark that $M < \infty$ as it is the supremum of a continuous function on a compact. By continuity of $f(\nu, \cdot)$ on a compact set we also have that, for every $\nu_1, \nu_2 \in [0,1]$, there exists $\mu_1$ and $\mu_2$ achieving the maximum for $f(\nu_1, \cdot )$ and $f(\nu_2, \cdot )$ and
\begin{align*}
g(\nu_1) - g(\nu_2) &= f(\nu_1,\mu_1) - f(\nu_2,\mu_2)\\
 & = f(\nu_1,\mu_1) - f(\nu_1,\mu_2) + f(\nu_1,\mu_2) - f(\nu_2,\mu_2) \leq f(\nu_1,\mu_2) - f(\nu_2,\mu_2)
\leq M | \nu_1 - \nu_2|.
\end{align*}
Which implies that $g$ is $M$-Lipschitz on $[0,1]$ and thus continuous.

Hence $L$ is continuous. Moreover, it satisfies $L(0) > 0$ and $L(1) \leq 0$ so, by the intermediate value theorem, we conclude that there exists $\nu^* \in (0,1]$ such that $L(\nu^*) = 0$. 

We now show that $\tilde{\pi}_{\nu^{*}}$ strictly improves on $\mix$. Indeed, we have
\begin{equation*}
\sup_{\mu \in [0,1]} \ratio_\numS \left( \tilde{\pi}_{\nu^{*}}, \mathcal{B} \left( \mu \right) \right) \stackrel{(a)}{=} \sup_{ \mu \in [0,1-q]} \ratio_\numS \left( \tilde{\pi}_{\nu^{*}}, \mathcal{B} \left( \mu \right) \right) \stackrel{(b)}{<} \sup_{\mu \in [0,1-q]} \ratio_\numS \left( \pi^{\bm{\lambda}}, \mathcal{B} \left( \mu \right) \right),
\end{equation*}
where $(a)$ holds because $L(\nu^*) = 0$ and $(b)$ follows from \eqref{eq:strict_improvement} and from the fact that $\nu^* > 0$. This shows that, $\pi^{\bm{\lambda}}$ is suboptimal.

Suppose that 
\begin{equation*}
\sup_{\mu \in [0,1-q]} \ratio_\numS \left( \pi^{\bm{\lambda}}, \mathcal{B} \left( \mu \right) \right) < \sup_{\mu \in [1-q,1]} \ratio_\numS \left( \pi^{\bm{\lambda}}, \mathcal{B} \left( \mu \right) \right).
\end{equation*}
In this case, the same reasoning, but by increasing the weight on the $\numS^{th}$ order statistic would lead to a strict improvement. Therefore, if an optimal policy $\mix$ exists for problem \eqref{eq:problem_ub} it must satisfy,
\begin{equation*}
\sup_{\mu \in [0,1-q]} \ratio_\numS \left( \pi^{\bm{\lambda}}, \mathcal{B} \left( \mu \right) \right) = \sup_{\mu \in [1-q,1]} \ratio_\numS \left( \pi^{\bm{\lambda}}, \mathcal{B} \left( \mu \right) \right).
\end{equation*}

\end{proof}

\begin{proof}[\textbf{Proof of \Cref{prop:balancing_regret}}]
It follows from \eqref{eq:extreme_1} and \eqref{eq:extreme_n} that there exists a $k \in \{2, \ldots, \numS\}$ such that 
\begin{eqnarray*}
\sup_{\mu \in [0,1-q]} \ratio_{\numS} \left(\OS{k-1},\mathcal{B} \left( \mu \right) \right) &\leq&\sup_{\mu \in [1-q,1]} \ratio_{\numS} \left(\OS{k-1},\mathcal{B} \left( \mu \right) \right) \\
\sup_{\mu \in [0,1-q]} \ratio_{\numS} \left(\OS{k},\mathcal{B} \left( \mu \right) \right) &\geq&\sup_{\mu \in [1-q,1]} \ratio_{\numS} \left(\OS{k},\mathcal{B} \left( \mu \right) \right). 
\end{eqnarray*}
Pick a $k$ verifying these two relations. We now construct a policy $\optpol$ randomizing between $D_{k-1:\numS}$ and $D_{k:\numS}$ and which satisfies the necessary condition  \eqref{eq:necessary_balance}. Consider the family of policies $\left( \pi^{k,\lambda} \right)_{\lambda \in [0,1]}$ prescribing $D_{k:\numS}$ w.p $\lambda$ and $D_{k-1:\numS}$ w.p $1-\lambda$.

We consider the function $L$ defined from $[0,1]$ to $\mathbb{R}$ as,  
\begin{equation*}
L \,: \lambda \mapsto \sup_{\mu 
\in [0,1-q]}  \ratio_{\numS} \left(\pi^{k,\lambda},\mathcal{B} \left( \mu \right) \right) -  \sup_{\mu 
\in [1-q,1]} \ratio_{\numS} \left(\pi^{k,\lambda},\mathcal{B} \left( \mu \right) \right),
\end{equation*}
and note that $L(0) \leq 0$ and $L(1) \geq 0$.  Moreover, $L$ is continuous on $[0,1]$ (see proof of \Cref{prop:necessary}). Thus by the intermediate value theorem, $L(\gamma) = 0 $ for some $\gamma \in [0,1]$.
We denote by $\optpol$ our candidate policy that prescribes the order statistic  $D_{k:\numS}$ w.p $ \gamma$ and $D_{k-1:\numS}$ w.p $1 - \gamma$.
We define $\mu^{-} \in \argmax_{\mu 
\in [0,1-q]} \ratio_{\numS} \left(\optpol,\mathcal{B} \left( \mu \right) \right)$ and $\mu^{+} \in \argmax_{\mu 
\in [1-q,1]} \ratio_{\numS} \left(\optpol,\mathcal{B} \left( \mu \right) \right)$ which exists by continuity on a compact. 
By construction of $\mu^{+}$, $\mu^{-}$ and because $L(\gamma) = 0$, we conclude that,
\begin{equation*}
\ratio_n\left(\pi^{k,\gamma}, \mathcal{B}( \mu^{-}) \right) =  \ratio_n\left(\optpol, \mathcal{B}( \mu^{+}) \right) = \sup_{\mu \in [0,1]} \ratio_n\left(\optpol, \mathcal{B}( \mu^{+}) \right).
\end{equation*}
\end{proof}

\begin{proof}[\textbf{Proof of \Cref{prop:prior}}]
Consider the family of priors $\left( p_{\delta} \right)_{\delta \in [0,1]}$ supported on $\left \{ \mu^{-},\mu^{+} \right \}$ and such that for any $\delta \in [0,1]$,
\begin{equation*}
p_\delta(\mu) = \begin{cases}
\delta \qquad &\text{if $\mu =  \mu^{+}$}\\
1 - \delta \qquad &\text{if $\mu = \mu^{-}$}. 
\end{cases}
\end{equation*}
We now show that there exists $\delta$ such that $\optpol$ is optimal for the problem,
\begin{equation}
\label{eq:bayesian_problem}
\inf_{\mix \in \Pi_\numS^{OS}}
 \mathbb{E}_{\mu \sim p_\delta} \left[ \ratio_\numS(\mix,\mathcal{B}(\mu)) \right].
\end{equation}

We first establish a sufficient condition for a policy $\mix$ to be optimal for problem \eqref{eq:bayesian_problem}.

Remark that policies in $\Pi_\numS^{OS}$ observe samples prior to decision hence,
\begin{equation*}
 \inf_{\mix \in \Pi_\numS^{OS}}
 \mathbb{E}_{\mu \sim p_\delta} \left[ \ratio_\numS(\pi,\mathcal{B}(\mu)) \right] = \sum_{j=0}^{\numS} \mathbb{P} \left( \sum_{i=1}^{\numS} D_i = j \right)   \inf_{\mix \in \Pi_\numS^{OS}}  \mathbb{E}_{\mu \sim p_\delta} \left[ \ratio_\numS \left (\mix,\mathcal{B}(\mu) \right) \Big \vert \sum_{i=1}^{\numS} D_i = j  \right],
\end{equation*}
where the equality holds because $\mix$ observes the historical samples and because the posterior distribution of $p_\delta$ only depends on the number of ones observed, i.e. $\sum_{i=1}^{\numS} D_i $ is a sufficient statistic. To ease notations, let $S_j$ denote the event $\{\sum_{i=1}^{\numS} D_i = j \}$ for every $j \in \{0, \ldots, \numS \}$.
To solve the inner optimization problem, we first notice that for every $j \in \{0, \ldots, \numS \}$ we have
\fontsize{10.5pt}{10.5pt}\selectfont
\begin{align*}
\mathbb{E}_{\mu \sim p_\delta} \left[ \ratio_\numS(\mix,\mathcal{B}(\mu)) \vert S_j \right]
&= \mathbb{P} \left(\mu = \mu^{+} \vert S_j \right) \frac{ \mu^{+} - ( 1- q)  + \left(1 - \mu^{+} - q \right) \mathbb{E}_{\mathbf{D}_1^\numS \sim \mathcal{B} \left(\mu^{+} \right) }\left [  \mathbb{E}_{x \sim { \mix \left( \mathbf{D}_{1}^\numS \right)} } [x] \, \Big \vert \, S_j \right] }{(1-\mu^{+})\left(1-q \right)}\\
&\qquad+ \mathbb{P} \left(\mu = \mu^{-} \vert  S_j \right) \frac{\left(1 - \mu^{-} - q \right) \mathbb{E}_{\mathbf{D}_1^\numS \sim \mathcal{B} \left(\mu^{-} \right) }\left [  \mathbb{E}_{x \sim { \mix \left( \mathbf{D}_{1}^\numS \right)} } [x] \, \Big \vert \, S_j \right]}{ \mu^{-}q}\\
&\stackrel{(a)}{=}   a_j \cdot \mathbb{E}_{\mathbf{D}_1^\numS \sim \mathcal{B} \left(\frac{1}{2} \right) }\left [  \mathbb{E}_{x \sim { \mix \left( \mathbf{D}_{1}^\numS \right)} } [x] \, \Big \vert \, S_j \right]  + b_j, 
\end{align*}
\normalsize
where $(a)$ holds because  $\mathbb{E}_{\mathbf{D}_1^\numS \sim \mathcal{B} \left(\mu \right) }\left [  \mathbb{E}_{x \sim { \mix \left( \mathbf{D}_{1}^\numS \right)} } [x ] \, \Big \vert \, S_j \right] = \mathbb{E}_{\mathbf{D}_1^\numS \sim \mathcal{B} \left(\mu' \right) }\left [  \mathbb{E}_{x \sim { \mix \left( \mathbf{D}_{1}^\numS \right)} } [x] \, \Big \vert \, S_j \right]$ for any $\mu, \mu' \in (0,1)$. This follows from the fact that for Bernoulli distributions, the distribution of  $\mathbf{D}_{1}^\numS$ conditional on $S_j$ is the same for every $\mu \in (0,1)$. Moreover,
\begin{equation*}
a_j :=  \mathbb{P} \left(\mu = \mu^{+} \, \vert \, S_j \right) \frac{ 1 - q - \mu^{+}}{ (1-\mu^{+})(1-q)} + \mathbb{P} \left(\mu = \mu^{-} \, \vert \, S_j \right) \frac{ 1- q - \mu^{-} }{ \mu^{-} q}
\end{equation*}
and $b_j = \mathbb{P} \left(\mu = \mu^{+} \, \vert \, S_j \right) \frac{\mu^{+} - (1-q)}{(1-\mu^{+}) (1-q)} $.

Note that, by definition of mixture of order statistics policies, we must have that, $ \mix \left( \mathbf{D}_{1}^\numS \right)$ has a support included in $[0,1]$ since $\mathbf{D}_{1}^\numS \in \{ 0, 1 \}^\numS$.
Hence, we obtain that for any $x_0 \in [0,1]$ if a policy $\mix$ satisfies the following property,
\begin{equation*}
 \mathbb{E}_{\mathbf{D}_1^\numS \sim \mathcal{B} \left(\frac{1}{2} \right) }\left [ \mathbb{E}_{x \sim { \mix \left( \mathbf{D}_{1}^\numS \right)} } \left[x \right] \, \Big \vert \, S_j  \right] = \begin{cases}
0 \qquad \text{if $\; a_j  > 0$}\\
x_0 \quad \, \, \, \text{if $\; a_j  = 0$}\\
1 \qquad \text{if $\; a_j  < 0$,}
\end{cases}
\end{equation*}
then it is optimal for problem \eqref{eq:bayesian_problem}. We now aim at proving that there exists a prior such that $\optpol$ satisfies this sufficient condition. The challenge is that the sufficient condition involves the sign of the coefficients $(a_j)_{j \in \{0,\ldots, \numS\}}$ depending on $\mu^{-}$, $\mu^{+}$ and $\delta$. We simplify this dependence with our next lemma  by showing that  for any choice of $\mu^{-} < 1-q \leq \mu^{+}$, we can construct a prior such that the sequence $(a_j)_{j \in \{0,\ldots, \numS\}}$ is decreasing and hits $0$ exactly once. Formally, we show the following.
\begin{lemma}
\label{lem:posterior_structure}
For any $\mu^{-} \in [0,1-q)$, $\mu^{+} \in [1-q,1)$, and for any $j_0 \in \{1, \ldots, \numS\}$, there exists $\delta' \in [0,1] $ such that under prior $p_{\delta'}$, the sequence $(a_j)_{j \in \{0,\ldots, \numS \}}$ is strictly decreasing and $a_{j_0} = 0$.
\end{lemma}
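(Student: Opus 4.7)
The plan is to work directly with the explicit expression for $a_j$. Denote
\[
\alpha \;:=\; \frac{1-q-\mu^{+}}{(1-\mu^{+})(1-q)}, \qquad \beta \;:=\; \frac{1-q-\mu^{-}}{\mu^{-} q},
\]
so that $a_j = \alpha\,\mathbb{P}(\mu=\mu^{+}\mid S_j) + \beta\,\mathbb{P}(\mu=\mu^{-}\mid S_j)$. First I would check the signs: since the setting of interest is $\mu^{-}<1-q<\mu^{+}$, we get $\alpha<0$ and $\beta>0$. Next, applying Bayes' rule under the prior $p_\delta$ (noting that, conditional on $\mu$, the number $\sum_i D_i$ is $\mathrm{Binomial}(\numS,\mu)$ so only $j$ enters the likelihood), I would introduce the likelihood ratio
\[
L_j \;:=\; \left(\frac{\mu^{+}}{\mu^{-}}\right)^{\!j}\!\left(\frac{1-\mu^{+}}{1-\mu^{-}}\right)^{\!\numS-j},
\]
and rewrite
\[
a_j \;=\; \frac{\alpha\,\delta L_j + \beta(1-\delta)}{\delta L_j + (1-\delta)}.
\]

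To pin down $\delta'$, I would solve $a_{j_0}=0$ explicitly: the numerator vanishes iff $\delta L_{j_0}/(1-\delta) = -\beta/\alpha$, and since $-\beta/\alpha>0$ and $L_{j_0}>0$, this yields a unique $\delta'\in(0,1)$. This handles the existence of $\delta'$ and the equality $a_{j_0}=0$.

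It then remains to show strict monotonicity of $j\mapsto a_j$ under this prior. I would first observe that $L_j$ is strictly increasing in $j$: indeed,
\[
\frac{L_{j+1}}{L_j} \;=\; \frac{\mu^{+}(1-\mu^{-})}{\mu^{-}(1-\mu^{+})} \;>\; 1,
\]
the inequality being equivalent to $\mu^{+}>\mu^{-}$. Next, viewing $a_j$ as a function of the single variable $t:=\delta' L_j$ (with $s:=1-\delta'>0$ fixed), a one-line calculation gives
\[
\frac{\partial}{\partial t}\!\left(\frac{\alpha t + \beta s}{t + s}\right) \;=\; \frac{(\alpha-\beta)\,s}{(t+s)^2} \;<\; 0,
\]
because $\alpha<0<\beta$. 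Combining these two monotonicities (strictly increasing $L_j$ in $j$, strictly decreasing $a_j$ in $L_j$) delivers the strict monotonicity $a_0>a_1>\cdots>a_\numS$ and completes the proof.

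I do not anticipate a serious obstacle here: the key algebraic facts are (i) the sign of $\alpha$ and $\beta$, which follows from the position of $\mu^{-},\mu^{+}$ relative to the critical quantile $1-q$, and (ii) the monotone likelihood ratio property of the Bernoulli family, which makes the dependence of $a_j$ on $j$ effectively one-dimensional through $L_j$. The only point that deserves a brief comment is that the case $\mu^{+}=1-q$ (where $\alpha=0$) is excluded by the hypotheses of the enclosing \Cref{prop:prior}, so that $\alpha<0$ strictly and $\delta'\in(0,1)$ is well-defined.
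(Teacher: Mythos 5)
Your proof is correct and rests on the same two facts as the paper's: the coefficients $\alpha$ and $\beta$ have opposite signs (because $\mu^{-}<1-q<\mu^{+}$), and the Bernoulli family has a strictly monotone likelihood ratio in $j$. The presentation differs in two minor ways, both to your advantage. First, you make the likelihood-ratio structure explicit by introducing $L_j$ and writing $a_j$ as a M\"obius transformation of $\delta L_j$; the paper instead computes $p^{+}_{j+1}(\delta)-p^{+}_{j}(\delta)$ directly by a finite-difference calculation with the coefficients $d_j$. Second, you solve $a_{j_0}=0$ in closed form and read off a unique $\delta'\in(0,1)$, where the paper invokes the intermediate value theorem on $\delta\mapsto a_{j_0}(\delta)$ using $a_{j_0}(0)>0>a_{j_0}(1)$. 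Both routes correctly deliver $\delta'\in(0,1)$ strictly, which is needed for the strict monotonicity claim (at $\delta\in\{0,1\}$ the sequence is constant). One small remark: as stated the lemma allows $\mu^{-}=0$ and $\mu^{+}=1-q$, under which $\beta$ is undefined and $\alpha=0$ respectively, and the claim would fail; the paper's own proof silently assumes $0<\mu^{-}<1-q<\mu^{+}<1$, so your closing remark that these degenerate endpoints are excluded in the enclosing \Cref{prop:prior} is exactly the right caveat.
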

The proof is deferred to \Cref{apx:sec_body}. \Cref{lem:posterior_structure} implies that for any $j_0 \in \{1,\ldots, \numS \}$ and $x_0 \in [0,1]$, any policy $\mix \in \Pi_{\numS}^{OS}$ that satisfies
\begin{equation*}
 \mathbb{E}_{\mathbf{D}_1^\numS \sim \mathcal{B} \left(\frac{1}{2} \right) }\left [
 \mathbb{E}_{x \sim { \mix \left( \mathbf{D}_{1}^\numS \right)} } \left[x\right] \, \Big \vert \, S_j  \right] = \begin{cases}
0 \qquad \text{if $\; j \leq j_0 - 1 $}\\
x_0 \quad \, \, \, \text{if $\; j = j_0 $}\\
1 \qquad \text{if $\; j \geq j_0 +1$,}
\end{cases}
\end{equation*}
is optimal for problem \eqref{eq:bayesian_problem}. 
We finally prove that $\optpol$ satisfies this simplified sufficient condition.

Note that by construction, for any $j \in \{1, \ldots, \numS \}$,
\begin{equation*}
 \mathbb{E}_{\mathbf{D}_1^\numS \sim \mathcal{B} \left(\frac{1}{2} \right) }\left [ \mathbb{E}_{x \sim  \optpol \left( \mathbf{D}_{1}^\numS \right) }  \left[x \right] \, \Big \vert \, S_j  \right]= \lambda \mathbb{E} \left[ D_{k:\numS} \, \Big \vert \, S_j \right] +  (1 -\lambda) \mathbb{E} \left[ D_{k-1:\numS} \,\Big \vert \, S_j  \right]
\end{equation*}
which implies that,
 \begin{equation}
  \mathbb{E}_{\mathbf{D}_1^\numS \sim \mathcal{B} \left(\frac{1}{2} \right) }\left [ 
 \mathbb{E}_{x \sim {\optpol  \left( \mathbf{D}_{1}^\numS \right)} } \left[x \right] \, \Big \vert \, S_j  \right] = \begin{cases}
0 \qquad \text{if $\; j  \leq n - k $}\\
\lambda \quad \, \, \, \text{if $\; j = n - k + 1 $}\\
1 \qquad \text{if $\; j  \geq n - k + 2$.}
\end{cases}
\end{equation}
Therefore, \Cref{lem:posterior_structure} applied with $j_0 = n - k + 1 $  implies that there exists $\delta_k$ such that, $\optpol$ is optimal for problem \eqref{eq:bayesian_problem}.  Setting $p^{*} = p_{\delta_k}$, we showed that,
\begin{equation*}
\inf_{ \mix \in \Pi_{\numS}^{OS} }  \mathbb{E}_{\mu \sim p^{*}} \left[ \ratio_\numS(\mix,\mathcal{B}(\mu)) \right] =  \mathbb{E}_{\mu \sim p^{*}} \left[ \ratio_\numS(\optpol,\mathcal{B}(\mu)) \right].
\end{equation*}
\end{proof}

\begin{proof} [\textbf{Proof of \Cref{thm:optimal_min_max}}]
First assume that, \eqref{eq:extreme_1} and \eqref{eq:extreme_n} hold and consider $k \in \{2, \ldots, \numS \}$, $\gamma \in [0,1]$, $\mu^{-} \in [0,1-q]$ and $\mu^{+} \in [1-q,1]$ as defined in \Cref{prop:balancing_regret}. We have that $\optpol$ satisfies the necessary condition \eqref{eq:indifference_body}.

We now show that $\optpol$ is optimal for the problem
\begin{equation*}
\inf_{ \mix \in \Pi_\numS^{OS}}  \sup_{\mu \in [0,1]} \ratio_{\numS} \left( \pi^{\bm{\lambda}}, \mathcal{B} \left(  \mu\right) \right).
\end{equation*}
First, we remark that,
\begin{equation*}
\ratio_{\numS} \left( \pi^{k,\lambda}, \mathcal{B}(\mu^{-})\right)= \sup_{\mu \in [0,1]} \ratio_{\numS} \left( \optpol, \mathcal{B} \left(  \mu \right) \right) \geq  \inf_{ \mix \in \Pi_\numS^{OS}}  \sup_{\mu \in [0,1]} \ratio_{\numS} \left( \pi^{\bm{\lambda}}, \mathcal{B} \left(  \mu\right) \right)
\end{equation*}
because $\optpol \in \Pi_\numS^{OS}$. To prove the lower bound, we note that for these choices of $k$, $\gamma$, $\mu^{-}$ and $\mu^{+}$, \Cref{prop:prior} ensures that there exists a prior $p^{*}$ supported on $\{ \mu^{-},\mu^{+} \}$ such that,
\begin{equation}
\label{eq:opt_dual}
\inf_{ \mix \in \Pi_{\numS}^{OS} }  \mathbb{E}_{\mu \sim p^{*}} \left[ \ratio_\numS(\mix,\mathcal{B}(\mu)) \right] =  \mathbb{E}_{\mu \sim p^{*}} \left[ \ratio_\numS(\optpol,\mathcal{B}(\mu)) \right].
\end{equation}
Therefore, 
 \begin{align*}
\inf_{ \mix \in \Pi_\numS^{OS}}  \sup_{\mu \in [0,1]} \ratio_{\numS} \left( \pi^{\bm{\lambda}}, \mathcal{B} \left(  \mu\right) \right)
 &=  \inf_{ \mix \in \Pi_\numS^{OS}}  \sup_{p \in \Delta \left([0,1] \right)} \mathbb{E}_{\mu \sim p} \left[ \ratio_{\numS} \left( \pi^{\bm{\lambda}}, \mathcal{B} \left(  \mu\right) \right) \right] \\
 &\stackrel{(c)}{\geq } \sup_{p \in \Delta \left([0,1] \right)}  \inf_{ \mix \in \Pi_\numS^{OS}}  \mathbb{E}_{\mu \sim p}\left[ \ratio_{\numS} \left( \mix, \mathcal{B} \left(  \mu\right) \right) \right]\\
 &\geq  \inf_{ \mix \in \Pi_\numS^{OS}}  \mathbb{E}_{\mu \sim p^*}\left[ \ratio_{\numS} \left( \mix, \mathcal{B} \left(  \mu\right) \right) \right]\\
 &\stackrel{(d)}{=} \mathbb{E}_{\mu \sim p^{*}} \left[ \ratio_\numS(\optpol,\mathcal{B}(\mu)) \right] \stackrel{(e)}{=}  \ratio_{\numS} \left( \pi^{k,\lambda}, \mathcal{B}(\mu^{-})\right) ,
 \end{align*}
where $(c)$ holds by weak duality, $(d)$ is a consequence of \eqref{eq:opt_dual}  and $(e)$ follows from \eqref{eq:indifference_body}. 
The lower bound matches our upper bound. Thus all inequalities are equalities and we have exhibited a saddle point for  \eqref{eq:opt_mixed_strategies_body}. This implies that,
\begin{equation*}
 \ratio_{\numS}^{*} \stackrel{(a)}{=} \inf_{ \mix \in \Pi_\numS^{OS}}  \sup_{\mu \in [0,1]} \ratio_{\numS} \left( \pi^{\bm{\lambda}}, \mathcal{B} \left(  \mu\right) \right) = \ratio_{\numS} \left( \pi^{k,\gamma}, \mathcal{B}(\mu^{-})\right) = \sup_{F \in \mathcal{F}} \ratio_n\left( \pi^{k,\gamma}, F \right),
\end{equation*}
where $(a)$ follows from \Cref{thm:minimax_reduction}.

In other words, $ \pi^{k,\gamma} $ is an optimal minimax data-driven algorithm and its performance can be explicitly computed by evaluating it against a specific Bernoulli distribution.
\end{proof}

\begin{proof}[\textbf{Proof of \Cref{cor:derand}}]
Fix $\numS \geq 1$.
We note that it is sufficient to show that, for every $F \in \mathcal{F}$, 
\begin{equation}
\label{eq:uniform_improve}
\ratio_n\left( \derpol , F \right) \leq \ratio_n\left( \optpol, F \right).
\end{equation}
We then conclude the proof by remarking that,
\begin{equation*}
\ratio_{\numS}^{*} \leq \sup_{F \in \mathcal{F}} \ratio_n\left( \derpol , F \right) \stackrel{(a)}{\leq} \sup_{F \in \mathcal{F}} \ratio_n\left( \optpol , F \right) \stackrel{(b)}{=} \ratio_{\numS}^{*},
\end{equation*}
where $(a)$ follows from \eqref{eq:uniform_improve} and $(b)$ is a consequence of \Cref{thm:optimal_min_max}. 

We now prove \eqref{eq:uniform_improve}. Fix $F \in \mathcal{F}$. We have that,
\begin{align*}
\mathcal{C}(\derpol,F,\numS)&= \mathbb{E}_{\mathbf{D}_1^\numS \sim F}\left [ \mathbb{E}_{x \sim { \derpol \left( \mathbf{D}_{1}^\numS \right)} } \left[  c_F(x) \right] \right]\\
&= \mathbb{E}_{\mathbf{D}_1^\numS \sim F}\left [  c_F( \left(1 - \gamma \right) D_{k-1:\numS} + \gamma D_{k:\numS})  \right]\\
&\stackrel{(a)}{\leq} \mathbb{E}_{\mathbf{D}_1^\numS \sim F}\left [  \left(1 - \gamma \right) c_F(  D_{k-1:\numS}) + \gamma c_F( D_{k:\numS})  \right]= \mathcal{C}(\optpol,F,\numS),
\end{align*}
where $(a)$ holds because $x \mapsto c_F(x)$ is the expectation of a family of convex functions and is thus convex.
\end{proof}

%%%%%%%%%%%%%%%%%%%%%%%%%%%%%%%%%%%%%%%%%%%%%%%%%%%%%%%%
%%%%%%%%%%%%%%%%%%%%%%%%%%%%%%%%%%%%%%%%%%%%%%%%%%%%%%%%%

\setcounter{equation}{0}
\setcounter{proposition}{0}
\setcounter{lemma}{0}
\setcounter{theorem}{0}

\section{Proofs for Section \ref{sec:asymptotic}} \label{apx:sec_asymptotic}

\begin{proof}[\textbf{
Proof of \Cref{thm:asymptotic}}]
The proof of this theorem goes as follows. We first establish, in \Cref{lem:asymptotic_exact},  a characterization of the asymptotic behavior of single order statistic policy sequences for different regimes. As a corollary, we derive the asymptotic approximation of the worst-case performance of SAA.

Finally, we leverage the characterization of the optimal policy derived in \Cref{thm:optimal_min_max} to reduce the understanding of the optimal performance to a problem involving mixture of order statistics. We finally,  conclude by applying again \Cref{lem:asymptotic_exact}.

\vspace{2mm}
\noindent \textbf{Step 1:}
We characterize the performance of a sequence of single order statistic policies. We first remark that a sequence of single order statistic policies can be characterized by a sequence $\mathbf{r} := (r_{\numS})_{\numS \geq 1}$ where for each $\numS \geq 1$, $r_{\numS} \in \{1,\ldots,\numS\}$. We denote by $\bm{\pi}^{\mathbf{r}}$, the sequence of policies such that for any $\numS \geq 1$, $\pi^{\mathbf{r}}_{\numS} = \OS{r_{\numS}}$.

The next result characterizes the asymptotic behavior of the worst-case performance of  
$\bm{\pi}^{\mathbf{r}}$.
\begin{lemma}
\label{lem:asymptotic_exact}
\begin{enumerate}[i)]
\item If $\mathbf{r}$ is such that, $ \lim_{\numS \to \infty} \frac{|r_{\numS} - q\numS |}{\sqrt{\numS}} = \ell < \infty$,  then
\begin{equation*}
\lim_{\numS \to \infty} \sqrt{\numS} \cdot \sup_{F \in \mathcal{F}} \ratio_{\numS} \left( \pi^{\mathbf{r}}_{\numS}, F \right) =  \max 
\left[
\max_{\delta \geq 0 } H^{+}(\delta,\ell),
\max_{\delta \geq 0 } H^{-}(\delta,\ell) 
 \right].
\end{equation*}
where, $H^{+}(\delta,\ell) :=  \frac{\delta}{q(1-q)} \left(1 - \Phi \left( \frac{\delta - \ell}{\sqrt{q(1-q)}} \right) \right)$, $H^{-}(\delta,\ell) := \frac{\delta}{q(1-q)} \left(1 - \Phi \left( \frac{\delta + \ell}{\sqrt{q(1-q)}} \right) \right)$ and $\Phi$ is the cdf of the standard gaussian distribution.

Furthermore, 
\begin{itemize}
\item If the sequence $\mu_\numS$ is such that $ \lim_{\numS \to \infty} \sqrt{\numS} \left( 1 - q - \mu_\numS \right) = \delta >0$, then
\begin{equation*}
\lim_{\numS \to \infty} \sqrt{\numS} \ratio_\numS(\pi_{\numS}^{\mathbf{r}},\mathcal{B}(\mu_\numS)) = H^{+} \left( \delta, \ell \right).
\end{equation*}
\item If the sequence $\mu_\numS$ is such that $ \lim_{\numS \to \infty} \sqrt{\numS} \left (\mu_\numS - \left( 1 - q \right) \right) = \delta >0$, then
\begin{equation*}
\lim_{\numS \to \infty} \sqrt{\numS}  \ratio_\numS(\pi_{\numS}^{\mathbf{r}},\mathcal{B}(\mu_\numS))  = H^{-} \left( \delta, \ell \right).
\end{equation*}
\end{itemize}
\item If $\mathbf{r}$ is such that,  $\lim_{\numS \to \infty} \frac{|r_{\numS} - q\numS |}{\sqrt{\numS}} = \infty$  then, 
\begin{equation*}
\lim_{\numS \to \infty} \sqrt{\numS} \cdot \sup_{F \in \mathcal{F}} \ratio_{\numS} \left( \pi^{\mathbf{r}}_{\numS}, F \right) = \infty
\end{equation*}
and there exists a sequence of elements $\{\mu_{\numS}\}_{\numS \ge 1}$ in $[0,1]$ such that
\begin{equation*}
\lim_{\numS \to \infty} \sqrt{\numS}  \ratio_{\numS}(\pi^{\mathbf{r}}_{\numS}, \mathcal{B}(\mu_{\numS}))  = \infty.
\end{equation*}
\end{enumerate}
\end{lemma}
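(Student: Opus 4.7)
The plan is to apply Theorem \ref{thm:main_mixture} to reduce the worst-case regret to a one-dimensional supremum over Bernoulli parameters, carry out a local asymptotic expansion around $\mu = 1-q$, and justify interchanging the supremum with the limit via concentration arguments; part (ii) will follow from an explicit divergent construction.

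\textbf{Reduction and local expansion.} By Theorem \ref{thm:main_mixture}, $\sup_{F \in \mathcal{F}} \ratio_\numS(\OS{r_\numS}, F) = \sup_{\mu \in [0,1]} \ratio_\numS(\OS{r_\numS}, \mathcal{B}(\mu))$, and a direct algebraic simplification of the closed form stated there yields
\begin{equation*}
\ratio_\numS(\OS{r}, \mathcal{B}(\mu)) = \begin{cases} (1 - q - \mu)\bigl(1 - B_{r,\numS}(1-\mu)\bigr)\big/(q \mu), & \mu \le 1-q, \\[4pt] (\mu - (1-q))\,B_{r,\numS}(1-\mu)\big/\bigl((1-q)(1-\mu)\bigr), & \mu \ge 1-q. \end{cases}
\end{equation*}
Both branches vanish linearly at $\mu = 1-q$, so the natural scaling is $\mu_\numS = (1-q) \mp \delta/\sqrt{\numS}$ for $\delta \ge 0$. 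In the regime $(r_\numS - q\numS)/\sqrt{\numS} \to \ell$, the central limit theorem applied to $\mathrm{Binomial}(\numS, q + \delta/\sqrt{\numS})$ (mean $\numS q + \delta\sqrt{\numS}$, variance $\sim \numS q(1-q)$) gives $1 - B_{r_\numS,\numS}(q + \delta/\sqrt{\numS}) \to \Phi\bigl((\ell - \delta)/\sqrt{q(1-q)}\bigr) = 1 - \Phi\bigl((\delta-\ell)/\sqrt{q(1-q)}\bigr)$, whence $\sqrt{\numS}\,\ratio_\numS(\OS{r_\numS}, \mathcal{B}(\mu_\numS)) \to H^+(\delta, \ell)$. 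The symmetric substitution on the right branch produces $H^-(\delta, \ell)$. This immediately gives both of the ``furthermore'' pointwise statements of part (i).

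\textbf{Interchange of sup and limit (main obstacle).} The crux is to verify that the supremum in $\mu$ is asymptotically attained within the $O(1/\sqrt{\numS})$ window around $1-q$. I will combine three ingredients: $(a)$ \emph{coercivity} — since $H^\pm(\delta,\ell) \to 0$ as $\delta \to \infty$ (the Gaussian tail eventually dominates the linear prefactor), there exists $\Delta<\infty$ such that $\max_{\delta \ge 0} H^\pm(\delta, \ell) = \max_{\delta \in [0,\Delta]} H^\pm(\delta, \ell)$; $(b)$ \emph{uniform convergence inside the window} — a Berry--Esseen bound for the binomial upgrades the pointwise CLT above to uniform convergence of $\sqrt{\numS}\,\ratio_\numS(\OS{r_\numS}, \mathcal{B}((1-q)\mp \delta/\sqrt{\numS}))$ to $H^\pm(\delta,\ell)$ over $\delta \in [0,\Delta]$, so the supremum over this window converges to $\max_{\delta \in [0,\Delta]} H^\pm(\delta,\ell)$; $(c)$ \emph{negligibility outside the window} — for $|\mu - (1-q)| \ge \Delta/\sqrt{\numS}$, Hoeffding's inequality gives $1 - B_{r_\numS,\numS}(1-\mu) \le \exp\bigl(-2\numS((1-\mu) - r_\numS/\numS)^2\bigr)$ (in the regime where $1-\mu > r_\numS/\numS$, symmetrically for $B_{r_\numS,\numS}(1-\mu)$ on the other side), and combined with the explicit prefactor $|1 - q - \mu|/\min\{q\mu,(1-q)(1-\mu)\}$ this yields $\sqrt{\numS}\,\ratio_\numS(\OS{r_\numS}, \mathcal{B}(\mu)) = o(1)$ uniformly on the complement of the window. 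Matching this upper bound with the lower bound obtained by plugging a maximizer $\delta^\star$ into the pointwise limits completes the proof of part (i).

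\textbf{Part (ii).} Assume without loss of generality $r_\numS - q\numS \to +\infty$ (the other sign is symmetric via the right branch). Choose $\mu_\numS := 1 - r_\numS/\numS$, which lies in $(0, 1-q)$ for $\numS$ large, and set $\delta_\numS := (r_\numS - q\numS)/\sqrt{\numS} \to \infty$. Then $\mathrm{Binomial}(\numS, r_\numS/\numS)$ has mean exactly $r_\numS$, so by the CLT $1 - B_{r_\numS,\numS}(r_\numS/\numS) \to 1/2$. Therefore,
\begin{equation*}
\sqrt{\numS}\,\ratio_\numS(\OS{r_\numS}, \mathcal{B}(\mu_\numS)) \;=\; \frac{\delta_\numS\,\bigl(1 - B_{r_\numS,\numS}(r_\numS/\numS)\bigr)}{q\,(1 - r_\numS/\numS)} \;\longrightarrow\; \infty,
\end{equation*}
since the numerator diverges while the denominator $q(1 - r_\numS/\numS) = q\mu_\numS$ stays bounded above by $q(1-q)$, regardless of whether $r_\numS/\numS$ stays away from $1$ (if it does, the denominator is also bounded away from $0$, reinforcing the divergence; if it approaches $1$, the denominator tends to $0$, which only accelerates the blow-up). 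This exhibits the desired divergent sequence of Bernoulli distributions and, a fortiori, shows that $\sqrt{\numS} \sup_F \ratio_\numS(\OS{r_\numS}, F) \to \infty$.
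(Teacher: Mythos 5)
Your strategy — reduce to Bernoulli via \Cref{thm:main_mixture}, expand locally around $\mu=1-q$ with the CLT to identify $H^{\pm}$, and close the supremum–limit interchange by splitting into a $\Theta(1/\sqrt{\numS})$ window (Berry–Esseen) and its complement (tail bound) — is a genuinely different and cleaner route than the paper's, which instead extracts a maximizing sequence $\mu_\numS^{*}$ and rules out, by a subsequence argument, that $\sqrt{\numS}|\mu_\numS^{*}-(1-q)|$ tends to $0$ or to $\infty$. Your closed-form expressions and the local CLT computations producing $H^{+}$ and $H^{-}$ are correct, and part (ii) matches the paper's construction (modulo a shared imprecision: $1-B_{r_\numS,\numS}(r_\numS/\numS)\to 1/2$ requires $\numS\,(r_\numS/\numS)(1-r_\numS/\numS)\to\infty$, though the divergence survives since the quantity stays bounded away from $0$).

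However, step $(c)$ contains a genuine gap. First, the claim that $\sqrt{\numS}\,\ratio_\numS(\OS{r_\numS},\mathcal{B}(\mu))=o(1)$ \emph{uniformly} on $\{|\mu-(1-q)|\geq\Delta/\sqrt{\numS}\}$ is false: at $\mu=(1-q)\pm\delta_0/\sqrt{\numS}$ with $\delta_0>\Delta$ fixed, the quantity converges to a positive constant of order $\delta_0\exp\left(-c(\delta_0-\ell)^2\right)$, not to zero; to make the argument close you need the double limit (for each $\varepsilon>0$ choose $\Delta$ so that the asymptotic tail contribution is $<\varepsilon$, then let $\numS\to\infty$, then $\varepsilon\to 0$). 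Second, and more seriously, Hoeffding alone does not control the endpoints $\mu\to 0$ and $\mu\to 1$, where the prefactor $|1-q-\mu|/\min\{q\mu,(1-q)(1-\mu)\}$ blows up like $1/\mu$ (resp.\ $1/(1-\mu)$) while the Hoeffding bound saturates at $\exp\left(-2\numS(1-q)^2\right)$; for $\mu$ below roughly $\sqrt{\numS}\,\exp\left(-2\numS(1-q)^2\right)$ your upper bound exceeds $1$ and gives no information. What actually holds is that $1-B_{r_\numS,\numS}(1-\mu)$ is of order $\mu^{\numS-r_\numS+1}$ near $\mu=0$ — far smaller than the Hoeffding bound — and capturing this requires a finer analysis of the Bernstein polynomial, which is exactly what the paper's \Cref{lem:asymptotic_fast} (Case~1, a Taylor expansion of $B_{r_\numS,\numS}$ at $0$ with an explicit bound on the derivative and a Stirling estimate on $\binom{\numS}{r_\numS}$) supplies. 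Without an analogous ingredient your argument does not control the supremum in the corners.
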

The proof is presented in \Cref{apx:sec_body}.

\Cref{lem:asymptotic_exact} establishes that there are two notable regimes driving the asymptotic worst-case performance. In the first regime where the sequence of order statistics is asymptotically ``close'' to $\ceil{ q \numS}$, in the sense that $r_\numS = q \numS + \mathcal{O} \left(\sqrt{\numS}\right)$, the worst-case relative regret decreases at a rate of $\Theta \left( 1 / \sqrt{\numS} \right)$. We also establish a closed form expression of the exact limiting constant associated to the rate of convergence and characterize the family of near worst-case distributions, namely  Bernoulli distributions whose means go to $1-q$ at a rate $\Theta \left( 1/\sqrt{\numS} \right)$.

In the second regime for which the sequence of order statistics is asymptotically ``far'' from $\ceil{ q \numS}$, we show that the worst-case relative regret decreases at a slower rate as it converges at a rate of $\omega \left( 1 / \sqrt{\numS} \right)$. This naturally implies that this family of policy sequences is necessarly suboptimal and strictly dominated by sequences of order statistics asymptotically ``close'' to $\ceil{ q \numS}$.

\vspace{2mm}
\noindent \textbf{Step 2:} We now characterize the asymptotic performance of SAA.
Let $\mathbf{r}^{\text{SAA}} = \left( \ceil{q \numS} \right)_{\numS \in \mathbb{N}}$ and recall that for every $\numS \in \mathbb{N}$, we have $\pi^{\text{SAA}}_{\numS} = \pi^{\mathbf{r}^{\text{SAA}} }_{\numS}$. Note that $\lim_{\numS \to \infty} \frac{|r^{\text{SAA}}_{\numS} - q\numS |}{\sqrt{\numS}} = 0$. Therefore, by (i) in \Cref{lem:asymptotic_exact} we conclude that,
\begin{equation}
\label{eq:SAA_asympt}
\lim_{\numS \to \infty} \sqrt{\numS} \sup_{F \in \mathcal{F}} \ratio_\numS \left( \pi_\numS^{\text{SAA}}, F \right) = \max 
\left[
\max_{\delta \geq 0 } H^{+}(\delta,0),
\max_{\delta \geq 0 } H^{-}(\delta,0) 
 \right] = C^*.
\end{equation}

\vspace{2mm}
\noindent \textbf{Step 3:} We finally derive an asymptotic approximation of the optimal performance.   
We use the characterization of the optimal policy derived in \Cref{thm:optimal_min_max}  and denote by $(k_{\numS})_{\numS \geq 1}$ and $(\gamma_{\numS})_{\numS \geq 1}$ the sequences of parameters that describe the optimal policy when facing $\numS$ samples. For any $\numS \in \mathbb{N}$, we have $\pi^{\mathbf{k},\bm{\gamma}}_\numS = \pi^{k_\numS,\gamma_\numS} $.
For every $\numS \in \mathbb{N}$, and every $\mu_\numS \in [0,1]$ remark that,
\small{
\begin{equation}
\label{eq:ub_lb_asymptotic}
\sup_{F \in \mathcal{F}} \ratio_\numS \left( \pi_\numS^{\text{SAA}}, F \right) \geq  \ratio_\numS^* \stackrel{(a)}{=} \sup_{F \in \mathcal{F}} \ratio_\numS \left( \pi^{\mathbf{k},\bm{\gamma}}_\numS, F \right) \stackrel{(b)}{\geq} \gamma_\numS \ratio_\numS \left( \pi^{k_\numS}, \mathcal{B} \left( \mu_\numS \right) \right) + (1-\gamma_\numS) \ratio_\numS \left( \pi^{k_\numS-1}, \mathcal{B} \left( \mu_\numS \right) \right),
\end{equation}
}
where $(a)$ holds by \Cref{thm:optimal_min_max} and $(b)$ is by definition of $\pi^{\mathbf{k},\bm{\gamma}}_\numS$.

Remark that, \eqref{eq:SAA_asympt} together with the first inequality of \eqref{eq:ub_lb_asymptotic} imply that,
\begin{equation}
\label{eq:ub_asymptotic_ratio}
\limsup_{\numS \to \infty} \sqrt{\numS} \ratio_\numS^{*} \leq C^*.
\end{equation}
We now compute a lower bound on the limit of $\sqrt{\numS} \ratio_\numS^*$ that matches the upper bound derived in \eqref{eq:ub_asymptotic_ratio}. We only need to show that $\liminf_{\numS \to \infty} \sqrt{\numS} \sup_{F \in \mathcal{F}} \ratio_\numS \left( \pi^{\mathbf{k},\bm{\gamma}}_\numS, F \right) \geq C^*$. Consider an increasing function $\psi$ such that $\sqrt{\psi(\numS)} \sup_{F \in \mathcal{F}} \ratio_{\psi(\numS)} \left( \pi^{\mathbf{k},\bm{\gamma}}_{\psi(\numS)}, F \right)$ converges. 
By inequality $(b)$ in \eqref{eq:ub_lb_asymptotic}, one only need to show that there exists a sequence $\left( \mu_\numS \right)_{\numS \in \mathbb{N}}$ such that, 
$$
\limsup_{\numS \to \infty} \sqrt{\psi(\numS)} \left( \gamma_{\psi(\numS)} \cdot \ratio_{\psi(\numS)} \left( \pi^{k_{\psi(\numS)}}, \mathcal{B} \left( \mu_{\psi(\numS)} \right) \right) + (1-\gamma_{\psi(\numS)}) \cdot \ratio_{\psi(\numS)} \left( \pi^{k_{\psi(\numS)-1}}, \mathcal{B} \left( \mu_{\psi(\numS)} \right) \right) \right) \geq C^*.
$$
We prove that this lower bound holds by considering different scenarios for the sequence $\mathbf{k}$. 
Consider an increasing function $\tilde{\psi}$ such that
 $\frac{|k_{\tilde{\psi} \left(\psi(\numS) \right)} - q \tilde{\psi} \left(\psi(\numS) \right) |}{\sqrt{\tilde{\psi} \left(\psi(\numS) \right)}}$ converges to a limit $\ell$ in $\mathbb{R} \cup \{ \infty \}$.  
To ease notations , we let $f := \tilde{\psi} \circ \psi$ and $\mathbf{k}_f := \left(k_{f(\numS)}\right)_{\numS \in \mathbb{N}}$.

 \vspace{2mm}
\noindent \textit{Case 1: $\ell = \infty$.}
Note that,
\begin{equation*}
\lim_{\numS \to \infty} \frac{|k_{f(\numS)} - q f(\numS) |}{\sqrt{f(\numS)}} = \lim_{\numS \to \infty} \frac{|k_{f(\numS)} - 1 - q f(\numS) |}{\sqrt{f(\numS)}} = \infty.
\end{equation*}
Hence, by $(ii)$ in \Cref{lem:asymptotic_exact} we conclude that there exists a sequence $\mu_\numS$ such that
$$\lim_{\numS \to \infty} \sqrt{f(\numS)} \ratio_\numS \left( \pi^{k_{f(\numS)}}, \mathcal{B} \left(\mu_{f(\numS)} \right) \right) =  \lim_{\numS \to \infty} \sqrt{f(\numS)} \ratio_{f(\numS)} \left( \pi^{k_{f(\numS) - 1}}, \mathcal{B} \left( \mu_{f(\numS)} \right) \right) = \infty.$$
and so,
$$
\lim_{\numS \to \infty} \sqrt{f(\numS)} \left( \gamma_{f(\numS)} \cdot \ratio_{f(\numS)} \left( \pi^{k_{f(\numS)}}, \mathcal{B} \left( \mu_{f(\numS)} \right) \right) + (1-\gamma_{f(\numS)}) \cdot \ratio_{f(\numS)} \left( \pi^{k_{f(\numS)-1}}, \mathcal{B} \left( \mu_{f(\numS)} \right) \right) \right) = \infty.
$$

 \vspace{2mm}
\noindent \textit{Case 2: $\ell < \infty$.}
In this case, $(i)$ in \Cref{lem:asymptotic_exact} establishes the asymptotic behavior of the worst-case expected relative regret. We remark that the limit depends only on $\ell$. Therefore, 
\begin{align*}
\lim_{\numS \to \infty} \sqrt{f(\numS)} \ratio_\numS \left( \pi^{k_{f(\numS)}}, \mathcal{B} \left(\mu_{f(\numS)} \right) \right) &= \max 
\left[
\max_{\delta \geq 0 } H^{+}(\delta,\ell),
\max_{\delta \geq 0 } H^{-}(\delta,\ell) 
 \right]\\
\lim_{\numS \to \infty} \sqrt{f(\numS)} \ratio_{f(\numS)} \left( \pi^{k_{f(\numS) - 1}}, \mathcal{B} \left( \mu_{f(\numS)} \right) \right) &=\max 
\left[
\max_{\delta \geq 0 } H^{+}(\delta,\ell),
\max_{\delta \geq 0 } H^{-}(\delta,\ell) 
 \right].
\end{align*}
Let $\delta^{+} \in \argmax_{\delta \geq 0}  H^{+}(\delta,\ell)$ and $\delta^{-} \in \argmax_{\delta \geq 0}  H^{-}(\delta,\ell)$. Assume that $H^{+}(\delta^{+},\ell) \geq H^{-}(\delta^{-},\ell)$ (the other case is proved by a similar argument) and consider the sequence $\left( \mu_{\numS} \right)_{\numS \in \mathbb{N}}$ defined as, $\mu_\numS = 1-q - \frac{\delta^{+}}{\sqrt{\numS}}$ for every $\numS \geq 1$.  By \Cref{lem:asymptotic_exact} we conclude that, 
\begin{equation*}
\lim_{\numS \to \infty} \sqrt{f(\numS)} \ratio_\numS \left( \pi^{k_{f(\numS)}}, \mathcal{B} \left(\mu_{f(\numS)} \right) \right) = \lim_{\numS \to \infty} \sqrt{f(\numS)} \ratio_\numS \left( \pi^{k_{f(\numS)-1}}, \mathcal{B} \left(\mu_{f(\numS)} \right) \right) = H^{+} \left(\delta^{+},\ell \right).
\end{equation*}
Hence, 
$$
\lim_{\numS \to \infty} \sqrt{f(\numS)} \left( \gamma_{f(\numS)} \cdot \ratio_{f(\numS)} \left( \pi^{k_{f(\numS)}}, \mathcal{B} \left( \mu_{f(\numS)} \right) \right) + (1-\gamma_{f(\numS)}) \cdot \ratio_{f(\numS)} \left( \pi^{k_{f(\numS)-1}}, \mathcal{B} \left( \mu_{f(\numS)} \right) \right) \right) = H^{+} \left(\delta^{+},\ell \right).
$$
To conclude the proof, we need to show that $H^{+} \left(\delta^{+},\ell \right) \geq C^*$. This is a straightforward consequence of the definition of $\delta^{+}$ together wit the following lemma.
\begin{lemma}
\label{lem:SAA_opt}
For any $\ell \in \mathbb{R}$,
\begin{equation*}
\max 
\left[
\max_{\delta \geq 0 } H^{+}(\delta,\ell),
\max_{\delta \geq 0 } H^{-}(\delta,\ell) 
 \right] \geq C^*.
\end{equation*}
\end{lemma}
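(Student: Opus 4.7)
\textbf{Proof plan for \Cref{lem:SAA_opt}.} The plan is to observe that $C^{*}$ itself can be rewritten in a form that matches $H^{+}$ or $H^{-}$ evaluated at $\ell=0$, and then use a monotonicity argument on $\Phi$ to upgrade the bound at $\ell=0$ to an arbitrary $\ell$. Specifically, substituting $\delta = p\sqrt{q(1-q)}$ in the definition of $C^{*}$ gives
\begin{equation*}
C^{*} \;=\; \frac{1}{\sqrt{q(1-q)}}\max_{p\ge 0} p\bigl(1-\Phi(p)\bigr) \;=\; \max_{\delta\ge 0}\frac{\delta}{q(1-q)}\Bigl(1-\Phi\bigl(\delta/\sqrt{q(1-q)}\bigr)\Bigr)
\;=\; \max_{\delta\ge 0} H^{+}(\delta,0) \;=\; \max_{\delta\ge 0} H^{-}(\delta,0).
\end{equation*}

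The remainder of the argument would split on the sign of $\ell$. First, I would handle $\ell\ge 0$ using $H^{+}$. Since $\Phi$ is nondecreasing, for every $\delta\ge 0$ one has $\Phi\bigl((\delta-\ell)/\sqrt{q(1-q)}\bigr)\le \Phi\bigl(\delta/\sqrt{q(1-q)}\bigr)$, so $H^{+}(\delta,\ell)\ge H^{+}(\delta,0)$. Taking the supremum over $\delta\ge 0$ then yields $\max_{\delta\ge 0} H^{+}(\delta,\ell)\ge C^{*}$. Symmetrically, for $\ell\le 0$ I would argue with $H^{-}$: $\Phi\bigl((\delta+\ell)/\sqrt{q(1-q)}\bigr)\le \Phi\bigl(\delta/\sqrt{q(1-q)}\bigr)$, so $H^{-}(\delta,\ell)\ge H^{-}(\delta,0)$ for all $\delta\ge 0$, and maximizing gives $\max_{\delta\ge 0} H^{-}(\delta,\ell)\ge C^{*}$. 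In either case the outer maximum in the lemma's statement is at least $C^{*}$, which is exactly what is required.

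There is essentially no hard step in this argument: the maximizer $p^{*}$ of $p(1-\Phi(p))$ does not need to be computed explicitly, and no derivative or moment calculation is required. The only ``care point'' is the change of variable connecting $C^{*}$ to the value of $H^{\pm}$ at $\ell=0$, which is just a rescaling by $\sqrt{q(1-q)}$. The proof is therefore short; its role is simply to turn the identity $C^{*}=\max_{\delta\ge 0}H^{\pm}(\delta,0)$ into a one-sided translation inequality, which is exactly what the monotonicity of $\Phi$ provides.
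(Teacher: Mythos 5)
Your proof is correct and follows essentially the same route as the paper: rewrite $C^{*}$ as $\max_{\delta\ge 0}H^{\pm}(\delta,0)$ via the substitution $p=\delta/\sqrt{q(1-q)}$, then use the monotonicity of $\Phi$ to show that shifting by $\ell$ can only increase the favorable branch ($H^{+}$ for $\ell\ge 0$, $H^{-}$ for $\ell\le 0$) pointwise in $\delta$. The paper packages this slightly differently, proving that $\ell\mapsto\max[\max_{\delta}H^{+},\max_{\delta}H^{-}]$ is an even function that is nondecreasing on $[0,\infty)$ and hence minimized at $\ell=0$, but the engine of the argument — the translation inequality from the monotonicity of $\Phi$ — is identical; your pointwise-in-$\delta$ comparison is, if anything, a touch more direct since it does not invoke an argmax.
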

The proof is deferred to \Cref{apx:sec_body}.

We hence conclude that 
$$
\limsup_{\numS \to \infty} \sqrt{\psi(\numS)} \left( \gamma_{\psi(\numS)} \cdot \ratio_{\psi(\numS)} \left( \pi^{k_{\psi(\numS)}}, \mathcal{B} \left( \mu_{\psi(\numS)} \right) \right) + (1-\gamma_{\psi(\numS)}) \cdot \ratio_{\psi(\numS)} \left( \pi^{k_{\psi(\numS)-1}}, \mathcal{B} \left( \mu_{\psi(\numS)} \right) \right) \right) \geq C^*.
$$
Therefore we showed that,
\begin{equation*}
C^* = \lim_{\numS \to \infty} \sqrt{\numS} \sup_{F \in \mathcal{F}} \ratio_\numS \left( \pi_\numS^{\text{SAA}}, F \right) \geq \limsup_{\numS \to \infty} \ratio_{\numS}^* \geq \liminf_{\numS \to \infty} \ratio_{\numS}^* \geq C^*,
\end{equation*}
which concludes the proof.

\end{proof}

\setcounter{equation}{0}
\setcounter{proposition}{0}
\setcounter{lemma}{0}
\setcounter{theorem}{0}
\section{Proofs of Auxiliary Results}\label{apx:sec_body}

\begin{proof}[\textbf{Proof of \Cref{lem:cost}.}] Fix $F \in {\cal F}$. We next analyze $c_F(x)$ by decomposing the expected cost. We have
\bearn
c_F(x) &=& \mathbb{E}_{D \sim F} \left[{ b(D - x)^{+} +  h(x - D)^{+}}\right]\\
&=& h \int_{[0,x]} (x-y)dF(y) + b \int_{(x,\infty)} (y - x) dF(y) \\
&=& (b+h) \int_{[0,x]} (x-y)dF(y) +  b \int_{[0,\infty)} (y - x) dF(y)\\
&=&(b+h)\left( xF(x) - \int_{[0,x]} y dF(y) \right) + b(\mathbb{E}_F[D] - x) \\
&\stackrel{(a)}{=}&b(\mathbb{E}_F[D] - x) + (b+h) \int_{[0,x]} F(y)dy,
\eearn
where equation $(a)$ is a consequence of Riemann-Stieltjes integration by part. 
%\newob{Do you need for this that $F\in {\cal F}$?}
\end{proof}

\begin{proof}[\textbf{ Proof of \Cref{lem:OS_vs_bern}}]
For every $\alpha \in [0,1]$, let  $F_{\alpha} := \mathcal{B}( 1- \alpha)$. For every $r \in \{1,\ldots, \numS \}$ we have,
\begin{align*}
\mathcal{C} \left( \OS{r}, F_{\alpha}, \numS \right) &= \mathbb{E}_{\mathbf{D}_1^\numS \sim F_{\alpha}}\left [ \mathbb{E}_{x \sim {\OS{r} \left( \mathbf{D}_{1}^\numS \right)} } \left[  c_{F_{\alpha}}(x) \right] \right]\\
&= \mathbb{E}_{\mathbf{D}_1^\numS \sim F_{\alpha}}\left [  c_{F_{\alpha}}(D_{r:\numS}) \right] \\
&=    \mathbb{E}_{\mathbf{D}_1^\numS \sim F_{\alpha}}\left [ \alpha h   D_{r:\numS}   + (1-\alpha)b \left(1-   D_{r:\numS}  \right) \right]\\
&= (b+h) \left[ (\alpha - q) \mathbb{E}_{\mathbf{D}_1^\numS \sim F_{\alpha}}\left [  D_{r:\numS} \right] +q(1-\alpha) \right].
\end{align*}
When $\alpha \leq q$, we observe that $x^{*}_{F_{\alpha}} = 1$ and $\opt(F_{\alpha}) = \alpha h $. Therefore,
\begin{equation*}
\ratio_n(\OS{r}, \mathcal{B}( 1- \alpha)) = \frac{(q - \alpha ) \left( 1 -  \mathbb{E}_{\mathbf{D}_1^\numS \sim F_{\alpha}}\left [  D_{r:\numS} \right]\right)  }{(1-q)\alpha}.
\end{equation*}
We have
$$\mathbb{E}_{\mathbf{D}_1^\numS \sim F_{\alpha}}\left [  D_{r:\numS} \right] = \mathbb{P}_{\mathbf{D}_1^\numS \sim F_{\alpha}} \left( D_{r:\numS} =1 \right) = 1- \mathbb{P}_{\mathbf{D}_1^\numS \sim F_{\alpha}} \left( D_{r:\numS} =0 \right) = 1 - B_{r,\numS}(\alpha),$$
 where the last equality follows from the definition of the Bernstein polynomial. In turn,  we conclude that for $\alpha \in [0,q)$,
\begin{equation}
\label{eq:OS_down}
\ratio_n(\OS{r}, \mathcal{B}( 1- \alpha)) = \frac{(q - \alpha ) B_{r,\numS}(\alpha) }{(1-q)\alpha}.
\end{equation}
When $\alpha \in [q,1]$, we observe that $x^{*}_{F_{\alpha}} = 0$ , $\opt(F_{\alpha}) = (1-\alpha) b$ and 
we establish similarly that,
\begin{equation}
\label{eq:OS_up}
\ratio_n(\OS{r}, \mathcal{B}( 1- \alpha)) = \frac{(\alpha - q) \left(1 -  B_{r,\numS}(\alpha) \right) }{q (1 -\alpha)}.
\end{equation}
We conclude the proof by remarking that \eqref{eq:OS_down} and \eqref{eq:OS_up} imply that for every $\alpha \in [0,1]$,
\begin{equation*}
\ratio_{\numS} \left( \OS{r} , \mathcal{B} \left( 1 - \alpha \right)  \right) = \frac{ (1- B_{r,\numS}(\alpha))(\alpha - q) + q(1- \alpha) } { \min \left\{ (1-q) \alpha, q(1-\alpha) \right\}} - 1.
\end{equation*}
\end{proof}

\begin{proof}[\textbf{Proof of \Cref{lem:support}}]
Consider a policy $\pi \in \Pi_\numS$ such that for some $\hat{\mathbf{D}}_1^{\numS} \in \{0,1\}^{\numS}$, the support of $\pi \left( \hat{\mathbf{D}}_1^{\numS} \right)$ is not a subset of $[\hat{\mathbf{D}}_{1:\numS},\hat{\mathbf{D}}_{\numS:\numS}]$. We now construct a policy $\pi' \in \Pi_\numS$ such that the support of $\pi' \left( \hat{\mathbf{D}}_1^{\numS} \right)$ is a subset of $[\hat{\mathbf{D}}_{1:\numS},\hat{\mathbf{D}}_{\numS:\numS}]$ and which ensures a cost at least as good as the one incurred by $\pi$ against any Bernoulli distribution.

Assume first that $\hat{D}_{1:\numS} = 0$ and $\hat{D}_{\numS:\numS} = 1$.

Recall that $G^{\pi}_{\hat{\mathbf{D}}_1^{\numS}}$ is the cdf of the distribution $\pi \left( \hat{\mathbf{D}}_1^{\numS} \right)$. We define $\pi'$ such that for all $\mathbf{D}_1^{\numS} \in \{0,1\}^{\numS}$, if $\mathbf{D}_1^{\numS} \neq \hat{\mathbf{D}}_1^{\numS}$, we have $\pi' \left( \mathbf{D}_1^{\numS} \right) = \pi \left( \mathbf{D}_1^{\numS} \right)$ and we construct the cdf of  $\pi' \left( \hat{\mathbf{D}}_1^{\numS} \right)$ in order to ensure that the support is $[0,1]$ as follows.

\vspace{-2mm}
\begin{equation*}
G^{\pi'}_{\hat{\mathbf{D}}_1^{\numS}} (y) = \begin{cases}
0 \qquad &\text{if $y < 0 $}\\
G^{\pi}_{\hat{\mathbf{D}}_1^{\numS}} (y) \qquad &\text{if $y \in [0,1)$}\\
1 &\text{if $y \geq 1.$}
\end{cases}
\end{equation*}

\vspace{-1mm}

For any $\mu \in [0,1]$, let $F_\mu$ be the cdf of the Bernoulli distribution $\mathcal{B} \left(\mu \right)$. We have for any $x < 0$,

\vspace{-1mm}

\begin{equation*}
c_{F_\mu} (x)  = \mu \cdot b \cdot (1-x)^{+}  + (1-\mu) \cdot b \cdot (-x)^{+}
>  \mu \cdot b  = c_{F_\mu} (0).
\end{equation*}
Similarly, one can show that for any $x > 1$, $c_{F_\mu} (x) > c_{F_\mu} (1)$. 

Therefore, for every $\mu \in [0,1]$, the difference in costs between $\pi$ and $\pi'$ satisfies,
\fontsize{10.1pt}{10.1pt}\selectfont
\begin{align*}
\mathcal{C} \left(\pi, \mathcal{B} \left(\mu \right), \numS \right) - \mathcal{C} \left(\pi', \mathcal{B} \left(\mu \right), \numS \right) 
&\stackrel{(a)}{=} \mathbb{P}_{ \mathbf{D}_1^\numS \sim \mathcal{B} \left(\mu \right)} \left( \mathbf{D}_1^\numS = \hat{\mathbf{D}}_1^\numS \right) \left( \mathbb{E}_{x \sim { \pi \left( \hat{\mathbf{D}}_{1}^\numS \right)} } \left[ c_{F_{\mu} (x)} \right] - \mathbb{E}_{x \sim { \pi' \left( \hat{\mathbf{D}}_{1}^\numS \right)} } \left[ c_{F_{\mu} (x)} \right] \right) \\
&= \mathbb{P}_{ \mathbf{D}_1^\numS \sim \mathcal{B} \left(\mu \right)} \left( \mathbf{D}_1^\numS = \hat{\mathbf{D}}_1^\numS \right) \left( \int_\mathbb{R} c_{F_\mu}(y) \, dG^{\pi}_{\hat{\mathbf{D}}_1^{\numS}}(y) - \int_{[0,1]} c_{F_\mu}(y) \, dG^{\pi'}_{\hat{\mathbf{D}}_1^{\numS}}(y)\right) \\
&\stackrel{(b)}{\geq} \mathbb{P}_{ \mathbf{D}_1^\numS \sim \mathcal{B} \left(\mu \right)} \left( \mathbf{D}_1^\numS = \hat{\mathbf{D}}_1^\numS \right) \left( \int_{(-\infty;0)} c_{F_\mu}(0) \, dG^{\pi}_{\hat{\mathbf{D}}_1^{\numS}}(y)  \right . \\
&\quad  \left. + \int_{[0,1)} c_{F_\mu}(y) \, dG^{\pi}_{\hat{\mathbf{D}}_1^{\numS}}(y)  + \int_{[1,\infty)} c_{F_\mu}(1) \, dG^{\pi}_{\hat{\mathbf{D}}_1^{\numS}}(y) - \int_{[0,1]} c_{F_\mu}(y) \, dG^{\pi'}_{\hat{\mathbf{D}}_1^{\numS}}(y)\right) \\
& \stackrel{(c)}{=}0,
\end{align*}
\normalsize
where $(a)$ holds because for all $\mathbf{D}_1^{\numS} \in \{0,1\}^{\numS}$, such that $\mathbf{D}_1^{\numS} \neq \hat{\mathbf{D}}_1^{\numS}$, we have $\pi' \left( \mathbf{D}_1^{\numS} \right) = \pi \left( \mathbf{D}_1^{\numS} \right)$, $(b)$ follows from the fact that $c_{F_\mu}(x) < c_{F_\mu}(0) $ for $x < 0$, and $c_{F_\mu}(x) > c_{F_\mu}(1) $ for $x > 1$. $(c)$ is a consequence of the constructions of $ G^{\pi'}_{\hat{\mathbf{D}}_1^{\numS}}$. Hence, this shows that we weakly improve the cost of policy $\pi$ with the policy $\pi'$.

We now consider the case where $\hat{D}_{1:\numS} = \hat{D}_{\numS:\numS} = 0$. In that case, we have that $\mathcal{C} \left( \pi, \mathcal{B} \left( 0 \right), \numS \right) > 0$. Remarking that $\opt \left( \mathcal{B} \left( 0 \right) \right) = 0$, we conclude that $\ratio_\numS \left( \pi, \mathcal{B}\left( 0 \right) \right) = \infty$, which shows the strict sub-optimality of $\pi$.

A similar reasoning holds for $\hat{D}_{1:\numS} = \hat{D}_{\numS:\numS} = 1$.

We conclude the proof by repeating this process for every value of $\hat{\mathbf{D}}_{1}^\numS$.
\end{proof}

\begin{proof}[\textbf{ Proof of \Cref{lem:increasing_only}}]
It is clear that, 
\begin{equation*}
\inf_{\substack{\mathbf{e} \in [0,1]^{\numS+1}\\ e_0 =0, \, e_\numS =1} } \sup_{ \mu \in [0,1]} \ratio_{\numS} \left( \DS, \mathcal{B} \left( \mu \right) \right) \leq \inf_{\substack{ \mathbf{e} \in [0,1]^{\numS+1}\\ e_0 =0, \, e_\numS =1 \\ \left(e_i \right) \, \text{non-decreasing}} } \sup_{ \mu \in [0,1]} \ratio_{\numS} \left( \DS, \mathcal{B} \left( \mu \right) \right).
\end{equation*}
We now show that,
\begin{equation*}
\inf_{\substack{ \mathbf{e} \in [0,1]^{\numS+1}\\ e_0 =0, \, e_\numS =1} } \sup_{ \mu \in [0,1]} \ratio_{\numS} \left( \DS, \mathcal{B} \left( \mu \right) \right) \geq \inf_{\substack{ \mathbf{e} \in [0,1]^{\numS+1}\\ e_0 =0, \, e_\numS =1 \\ \left(e_i \right) \, \text{non-decreasing}} } \sup_{ \mu \in [0,1]} \ratio_{\numS} \left( \DS, \mathcal{B} \left( \mu \right) \right).
\end{equation*}

Consider $\mathbf{e} = \left( e_i \right)_{i \in \{0, \ldots \numS +1 \}}$ and assume that there exists $j \in \{0,\ldots \numS-1\}$ such that $e_j > e_{j+1}$. We consider the sequence $\mathbf{f} :=  \left( f_i \right)_{i \in \{0, \ldots \numS +1 \}}$ such that, 
\begin{equation*}
f_i = \begin{cases}
e_i &\qquad \text{if $ i \in \{0,\ldots, \numS\} \setminus \{ j, j+1 \}$}\\
\frac{1}{\frac{q}{\numS -j} + \frac{1-q}{j} } \left( \frac{q}{\numS -j} e_j + \frac{1-q}{j} e_{j+1} \right) &\qquad \text{if $ i \in \{ j, j+1 \}$.}
\end{cases}
\end{equation*}
We show that the cost of $\DSf$ is weakly lower than the one of $\DS$. We have, for any $\mu \in [0,1]$, 
\begin{align*}
\mathcal{C} \left(\DS, \mathcal{B} \left(\mu \right), \numS \right) - \mathcal{C} \left(\DSf, \mathcal{B} \left(\mu \right), \numS \right) &\stackrel{(a)}{=} \mathbb{P}_{ \mathbf{D}_1^\numS \sim \mathcal{B} \left(\mu \right)} \left( \sum_{i=1}^{\numS} D_{i} = j \right) \left( c_{F_{\mu}} \left( e_j \right) -  c_{F_{\mu}} \left( f_j \right) \right) \\
&\qquad + \mathbb{P}_{ \mathbf{D}_1^\numS \sim \mathcal{B} \left(\mu \right)} \left( \sum_{i=1}^{\numS} D_{i} = j+1 \right) \left( c_{F_{\mu}} \left( e_{j+1} \right) -  c_{F_{\mu}} \left( f_{j+1} \right) \right)\\
&= C_\mu \cdot \left(1 - \mu - q \right)  \cdot \left( \frac{1- \mu}{n-j}  \left(e_j - f_j \right)  + \frac{\mu}{j+1} \left(e_{j+1} - f_{j+1} \right)  \right),
\end{align*}
where $C_\mu = \left(b+h\right)  \cdot \mu^j \cdot \left( 1- \mu \right)^{\numS - j - 1} \frac{\numS!}{j! \, (\numS - j -1)! }$ and $(a)$ holds because $e_i = f_i$ for any $i$ different from $j$ and $j+1$. Note that $C_\mu \geq 0$ for all $\mu \in [0,1]$. Letting $\mathcal{L} \left(\mu \right) =  \frac{1- \mu}{n-j}  \left(e_j - f_j \right)  + \frac{\mu}{j+1} \left(e_{j+1} - f_{j+1} \right)$ for all $\mu \in [0,1]$, we only need to show that
\begin{align*}
\mathcal{L} \left(\mu \right)  &\geq 0  \qquad \text{for $\mu \in [0,1-q]$}\\
\mathcal{L} \left(\mu \right)  &\leq 0  \qquad \text{for $\mu \in [1-q,1]$.} 
\end{align*}
Note that $\mathcal{L}$ is a linear function of $\mu$, $\mathcal{L} \left(0 \right) = \frac{e_j - f_j}{\numS - j} \geq 0$ and $\mathcal{L} \left(1\right) = \frac{e_{j+1} - f_{j+1}}{j} \leq 0$, therefore, one only need to check $\mathcal{L} \left(1-q \right) = 0$. The latter equality holds by definition of $f_j$ and $f_{j+1}$.

We hence conclude that for any $\mu \in [0,1]$
\begin{equation*}
\mathcal{C} \left(\DS, \mathcal{B} \left(\mu \right), \numS \right) - \mathcal{C} \left(\DSf, \mathcal{B} \left(\mu \right), \numS \right) \geq 0.
\end{equation*}
By iterating this process for any $i$ such that $e_i > e_{i+1}$, we conclude that
\begin{equation*}
\inf_{\substack{ \mathbf{e} \in [0,1]^{\numS+1}\\ e_0 =0, \, e_\numS =1} } \sup_{ \mu \in [0,1]} \ratio_{\numS} \left( \DS, \mathcal{B} \left( \mu \right) \right) \geq \inf_{\substack{ \mathbf{e} \in [0,1]^{\numS+1}\\ e_0 =0, \, e_\numS =1 \\ \left(e_i \right) \, \text{non-decreasing}} } \sup_{ \mu \in [0,1]} \ratio_{\numS} \left( \DS, \mathcal{B} \left( \mu \right) \right).
\end{equation*}
This completes the proof.
\end{proof}

\begin{proof}[\textbf{Proof of \Cref{lem:posterior_structure}.}]

To ease notations, let $p^{+}_j(\delta) := \mathbb{P}(\mu = \mu^{+} \, \vert \, \sum_{i=1}^\numS D_i = j   )$ for $j \in \{1, \ldots, \numS \}$, and remark that,
\begin{equation*}
a_j =p^{+}_j(\delta) \frac{1 - q - \mu^{+} }{ (1- \mu^{+}) ( 1-q) } + \left(1 -  p^{+}_j(\delta) \right) \frac{1 - q - \mu^{-}}{ \mu^{-}q}.
\end{equation*}

\vspace{2mm}
\noindent \textbf{Step 1:} We first show that the sequence $(a_j)_{j \in \{0, \ldots, \numS \} }$ is decreasing.
By assumption, $0 < \mu^{-} <1- q < \mu^{+} <1$, therefore, $\frac{1- q - \mu^{-} }{ \mu^{-}q }  > 0 $ and $\frac{1 - q - \mu^{+}}{ (1 - \mu^{+})(1-q)}<  0$. Hence, to show that $(a_j)_{j \in \{0, \ldots, \numS \} }$ is decreasing, it is sufficient to show that $(p^{+}_j(\delta))_{j \in \{ 0, \ldots, \numS\} }$ is increasing. For every $j \in \{0,\ldots, \numS\}$  we have,
\begin{align*}
p^{+}_j(\delta) &= \frac{\mathbb{P}(\sum_{i=1}^\numS D_i = j  \vert \mu = \mu^{+}) \mathbb{P} \left( \mu = \mu^{+} \right)}{\mathbb{P}(\sum_{i=1}^\numS D_i = j \vert \mu = \mu^{-} )  \mathbb{P} \left( \mu = \mu^{-} \right)+  \mathbb{P}(\sum_{i=1}^\numS D_i = j \vert \mu = \mu^{+} )  \mathbb{P} \left( \mu = \mu^{+} \right) } \\
&= \frac{\delta {\numS \choose j} (\mu^{+})^j (1 - \mu^{+})^{\numS-j} }{(1-\delta){\numS \choose j} (1- \mu^{-})^{\numS - j} (\mu^{-})^{j} + \delta  {\numS \choose j} (1-\mu^{+})^{\numS-j} (\mu^{+})^j}.
\end{align*}
Therefore, for $j \in \{0,\ldots, \numS-1\}$, we have
\begin{equation*}
p^{+}_{j+1}(\delta) - p^{+}_j(\delta) = \delta  (\mu^{+})^j (1 - \mu^{+})^{\numS-j-1} \frac{ \mu^{+} d_j - (1-\mu^{+}) d_{j+1} }{d_j d_{j+1}},
\end{equation*}
where $d_j := \delta  (1 - \mu^{+})^{\numS - j} (\mu^{+})^{j} + (1-\delta)  (1- \mu^{-})^{\numS-j} (\mu^{-})^j \geq 0$. Furthermore,
\begin{align*}
\mu^{+} d_j - (1-\mu^{+}) d_{j+1} &= (1-\delta) \mu^{+}  (1-\mu^{-})^{\numS-j} (\mu^{-})^j - (1-\delta) (1-\mu^{+})  (1-\mu^{-})^{\numS-j-1} (\mu^{-})^{j+1} \\
&= (1-\delta) (1-\mu^{-})^{\numS-j-1} (\mu^{-})^j  \left(\mu^{+} - \mu^{-} \right) > 0.
\end{align*}
\noindent \textbf{Step 2:} Let $j \in \{0, \ldots, \numS \}$ and remark that $p_j^{+}(0) = 0$ whereas, $p_j^{+}(1) = 1$.
Hence by making explicit the dependency of $a_j(\delta)$ in $\delta$, we have that, $a_j(0) > 0$, $a_j(1)  < 0$ and $\delta \mapsto a_j(\delta)$ is continuous. Hence, by the intermediate value theorem, there exists $\delta'\in [0,1]$ such that $a_j(\delta') = 0$.
\end{proof}

\begin{proof}[ \textbf{Proof of \Cref{lem:asymptotic_exact}}]

To prove this lemma we characterize two regimes for the sequence of order statistic policies $\bm{\pi}^{\mathbf{r}}$. These two regimes depends on the value of the following limit
\begin{equation*}
\lim_{\numS \to \infty} \frac{| r_\numS - q \numS |}{\sqrt{\numS}} := \ell.
\end{equation*}
When $\ell < \infty$, we show that the worst-case expected regret scales at a rate $\Theta \left( \frac{1}{\sqrt{\numS}} \right)$ and we derive an exact closed form characterization of the limiting constant, along with a family of candidate hard cases that nature may select.

We also show that for $\ell = \infty$, the worst-case expected regret scales at a rate $\omega \left( \frac{1}{\sqrt{\numS}} \right)$, which naturally implies sub-optimality of this class of order statistic policies.

First remark that \Cref{thm:main_mixture} implies that,
\begin{equation*}
\sup_{F \in \mathcal{F}} \ratio_\numS \left( \pi^{\mathbf{r}}_\numS, F \right) = \sup_{\mu \in [0,1]} \ratio_\numS \left( \pi^{\mathbf{r}}_\numS, \mathcal{B} \left( \mu \right) \right). 
\end{equation*}
Furthermore, we derive from \Cref{lem:OS_vs_bern} a closed form expression of the relative-regret of an order statistic policy against a Bernoulli distribution. Namely, we have that
\begin{align}
\ratio_\numS \left( \pi^{\mathbf{r}}_\numS, \mathcal{B} \left( \mu \right) \right) &= \frac{\mu - (1-q)}{(1-q) (1-\mu)} \mathbb{P} \left( D_{r_\numS:\numS} = 0 \right) \qquad \text{if $ \mu \geq 1-q$} \label{eq:reg_1}\\
\ratio_\numS \left( \pi^{\mathbf{r}}_\numS, \mathcal{B} \left( \mu \right) \right) &= \frac{ 1-q - \mu }{q\mu} \mathbb{P} \left( D_{r_\numS:\numS} = 1 \right) \qquad \text{if $ \mu \leq 1-q$} \label{eq:reg_2}
\end{align}

\vspace{2mm}

\noindent \textbf{Step 1: $\ell < \infty$.}
We show that in this case,
\begin{equation*}
\lim_{\numS \to \infty} \sqrt{\numS} \cdot \sup_{F \in \mathcal{F}} \ratio_{\numS} \left( \pi^{\mathbf{r}}_{\numS}, F \right) =  \max 
\left[
\max_{\delta \geq 0 } H^{+}(\delta,\ell),
\max_{\delta \geq 0 } H^{-}(\delta,\ell) 
 \right],
\end{equation*}
where, $H^{+}(\delta,\ell) :=  \frac{\delta}{q(1-q)} \left(1 - \Phi \left( \frac{\delta + \ell}{\sqrt{q(1-q)}} \right) \right)$ and $H^{-}(\delta,\ell) := \frac{\delta}{q(1-q)} \left(1 - \Phi \left( \frac{\delta - \ell}{\sqrt{q(1-q)}} \right) \right)$. 

It follows from \Cref{thm:main_mixture} that it is sufficient to show that
\begin{equation}
\label{eq:asympt_form_main}
\lim_{\numS \to \infty} \sqrt{\numS} \cdot \sup_{\mu \in [0,1]} \ratio_{\numS} \left( \pi^{\mathbf{r}}_{\numS}, \mathcal{B} \left( \mu \right) \right) =  \max 
\left[
\max_{\delta \geq 0 } H^{+}(\delta,\ell),
\max_{\delta \geq 0 } H^{-}(\delta,\ell) 
 \right].
\end{equation}

We analyze the worst-case expected regret incurred by a sequence policy of order statistics policies against different regimes of Bernoulli distribution with means $(\mu_\numS)_{\numS \in \mathbb{N}}$. These regimes are defined by the limit of the following sequence,
\begin{equation*}
\lim_{\numS \to \infty} \sqrt{\numS} \left(\mu_\numS - (1 - q) \right) = \delta,
\end{equation*}
where $\delta \in \mathbb{R} \cup \{- \infty, \infty\}$. We show that if $\delta \in \{0, -\infty , \infty\}$, meaning that  $\mu_\numS$ does not converge to $1-q$ at a rate of $\frac{1}{\sqrt{\numS}}$, the asymptotic performance decreases at a faster rate than $\frac{1}{\sqrt{\numS}}$.

\vspace{2mm}
\noindent \textit{Case (a): $\delta = 0$}. 

In this cases, we have that $\mu_\numS \to 1-q$ as $\numS \to \infty$. Let $\mathcal{N}_{+} := \{ \numS \, \vert \, \mu_\numS \geq 1-q\}$ and $\mathcal{N}_{-} := \{ \numS \, \vert \, \mu_\numS < 1-q\}$ and consider the subsequences $(\mu_\numS')_{\numS \in \mathcal{N}^{+}}$ and $(\mu_\numS'')_{\numS \in \mathcal{N}^{-}}$. 
Since $\mathbb{N} = \mathcal{N}_{+} \cup \mathcal{N}_{-}$, one of these two sets must be infinite. If one of them is finite, one only need to compute the limit for the second subsequence. When both are infinite, we establish the existence of the limit of $\sqrt{\numS} \ratio_\numS \left( \pi^{\mathbf{r}}_\numS, \mathcal{B} \left( \mu_\numS\right) \right)$ and compute its value, by deriving the limit of the relative regret against both subsequence $(\mu_\numS')_{\numS \in \mathcal{N}^{+}}$ and $(\mu_\numS'')_{\numS \in \mathcal{N}^{-}}$ and prove that the limit coincides.

In particular, remark that \eqref{eq:reg_1} implies that, for every $\numS \in \mathcal{N}_{+}$,
\begin{equation*}
\sqrt{\numS} \ratio_\numS \left( \pi^{\mathbf{r}}_\numS, \mathcal{B} \left( \mu_\numS' \right) \right)  \leq \sqrt{\numS} \frac{\mu_\numS' - (1-q)}{(1-q) (1-\mu_\numS')} \stackrel{(a)}{\to} 0 \quad \text{as $\numS \to \infty$},
\end{equation*}
where $(a)$ holds because the numerator goes to $0$ as $\delta = 0$ and the denominator converges to a constant. Similarly, \eqref{eq:reg_2} implies that for every $\numS \in \mathcal{N}_{-}$,
\begin{equation*}
\sqrt{\numS} \ratio_\numS \left( \pi^{\mathbf{r}}_\numS, \mathcal{B} \left( \mu''_\numS \right) \right) \leq \sqrt{\numS} \frac{ 1-q - \mu''_\numS}{q\mu''_\numS} \to 0 \quad \text{as $\numS \to \infty$}.
\end{equation*}
Therefore,
\begin{equation*}
\lim_{\numS \to \infty} \sqrt{\numS} \ratio_\numS \left( \pi^{\mathbf{r}}_\numS, \mathcal{B} \left( \mu_\numS \right) \right) = 0.
\end{equation*}

\vspace{2mm}
\noindent \textit{Case (b): $\delta \in \{ -\infty, \infty\}$}. 

This case is handled by the following lemma proved at the end of \Cref{apx:sec_body}.
\begin{lemma}
\label{lem:asymptotic_fast}
If $r_\numS$ is such that, $ \lim_{\numS \to \infty} \frac{|r_\numS - q\numS|}{\sqrt{\numS}} = \ell < \infty$, 
then for any sequence $\mu_\numS$ such that
$$ \lim_{\numS \to \infty} \sqrt{\numS} | 1 -q - \mu_\numS|= \infty,$$
we have that
\begin{equation*}
\lim_{\numS \to \infty} \sqrt{\numS}  \ratio_\numS(\Pi_\numS^{\mathbf{r}},\mathcal{B}(\mu_\numS)) = 0.
\end{equation*}
\end{lemma}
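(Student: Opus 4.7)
The plan is to leverage the explicit expressions for the relative regret against Bernoulli distributions derived earlier (equations (24)--(25) following from \Cref{lem:OS_vs_bern}), and to bound the tail probabilities $\mathbb{P}(D_{r_\numS:\numS}=0)$ and $\mathbb{P}(D_{r_\numS:\numS}=1)$ via sharp large-deviation estimates for the binomial distribution. By symmetry, I would focus on a subsequence along which $\mu_\numS \ge 1-q$ (the case $\mu_\numS < 1-q$ is handled identically with the roles of $\mu_\numS$ and $1-\mu_\numS$ swapped). Writing $a_\numS := \mu_\numS - (1-q)$, the hypothesis becomes $\beta_\numS := \sqrt{\numS}\, a_\numS \to \infty$. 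Since $\mathbb{P}(D_{r_\numS:\numS}=0) = \mathbb{P}(\mathrm{Bin}(\numS, \mu_\numS) \le \numS - r_\numS)$ and the deviation below the mean equals $\numS a_\numS + (r_\numS - q\numS) = \numS a_\numS \,(1+o(1))$ (using $|r_\numS - q\numS| = O(\sqrt{\numS})$ and $\numS a_\numS = \omega(\sqrt{\numS})$), this probability vanishes super-polynomially. The goal is to show the decay dominates both the factor $\sqrt{\numS}$ and the ratio $a_\numS/((1-q)(1-\mu_\numS))$.

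I would split into two sub-cases. In the first, $1-\mu_\numS \ge \eta$ for some fixed $\eta > 0$ along the subsequence, so the ratio $a_\numS/((1-q)(1-\mu_\numS))$ is bounded by a constant multiple of $a_\numS$. Hoeffding's inequality then yields $\mathbb{P}(\mathrm{Bin}(\numS,\mu_\numS) \le \numS - r_\numS) \le \exp\bigl(-2\beta_\numS^2(1+o(1))\bigr)$, hence $\sqrt{\numS}\,\ratio_\numS \le C\, \beta_\numS\, e^{-\beta_\numS^2} \to 0$. In the second sub-case, $1-\mu_\numS \to 0$, and Hoeffding is too loose to overcome the blow-up of $1/(1-\mu_\numS)$. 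I would instead use the factorial-moment (union-bound) estimate $\mathbb{P}(\mathrm{Bin}(\numS,1-\mu_\numS) \ge r_\numS) \le \binom{\numS}{\lceil r_\numS\rceil}(1-\mu_\numS)^{\lceil r_\numS\rceil}$ combined with Stirling's approximation for $\binom{\numS}{\lceil r_\numS\rceil}$. Since $r_\numS \to \infty$ and $1-\mu_\numS$ is eventually below the threshold $q(1-q)^{(1-q)/q}$ (below which $\binom{\numS}{\lceil r_\numS\rceil}(1-\mu_\numS)^{r_\numS}$ decays exponentially in $\numS$), this bound suppresses the diverging factor $1/(1-\mu_\numS)$ and the $\sqrt{\numS}$ factor simultaneously.

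The main obstacle is this second sub-case: the factor $1/(1-\mu_\numS)$ may diverge arbitrarily fast along the subsequence, so a standard Chernoff/Hoeffding argument is insufficient. Resolving it requires exploiting the super-exponential rarity of observing $\Theta(\numS)$ zeros in a sample of Bernoullis with vanishing failure probability, which the factorial-moment bound captures cleanly once one splits at the explicit threshold $q(1-q)^{(1-q)/q}$ separating the Hoeffding and combinatorial regimes. Assembling the two sub-cases yields $\sqrt{\numS}\,\ratio_\numS(\pi^{\mathbf{r}}_\numS,\mathcal{B}(\mu_\numS)) \to 0$, as claimed.
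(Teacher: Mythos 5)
Your proposal is correct and follows the same two-regime decomposition as the paper's proof: pass to a subsequence where one sign of $\mu_\numS-(1-q)$ prevails, then split on whether $1-\mu_\numS$ stays bounded away from $0$ (Hoeffding regime) or tends to $0$ (combinatorial/Stirling regime). The Hoeffding branch is essentially identical. The one genuine difference is how the degenerate branch kills the $1/(1-\mu_\numS)$ blow-up: the paper writes $\phi_{r_\numS}^{\numS}(\alpha)=\ratio_\numS(\OS{r_\numS},\mathcal{B}(1-\alpha))$, notes $\phi_{r_\numS}^{\numS}(0)=0$, and applies a first-order Taylor expansion with a crude pointwise bound on the derivative $\phi'_{r_\numS}$ to cancel the $1/\alpha$ singularity, while you bound the binomial tail directly via the union bound $\mathbb{P}(\mathrm{Bin}(\numS,1-\mu_\numS)\ge r_\numS)\le\binom{\numS}{r_\numS}(1-\mu_\numS)^{r_\numS}$. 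Both devices land on the same Stirling estimate of $\binom{\numS}{r_\numS}$ dominated by the super-exponential decay of $(1-\mu_\numS)^{r_\numS-1}$ with $r_\numS\sim q\numS$; your route is arguably more direct since it avoids differentiating the regret function and absorbs the $1/(1-\mu_\numS)$ factor in one step. One small remark: the explicit crossover threshold $q(1-q)^{(1-q)/q}$ you invoke is not actually needed. Once $1-\mu_\numS\to 0$ and $r_\numS\sim q\numS$, the exponent $r_\numS\log(1-\mu_\numS)$ is $-\omega(\numS)$, which already overwhelms the at-most-exponential growth of $\binom{\numS}{r_\numS}$, so the product decays faster than any polynomial without pinpointing a crossover; mentioning the threshold is harmless but superfluous.
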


\vspace{2mm}
\noindent \textit{Case (c): $\delta \in \mathbb{R} \setminus \{0 \} $}. 

Assume first that $\delta > 0$. This implies that, $ \mu_\numS - (1-q) \sim \frac{\delta}{\sqrt{\numS}}$. 

 Remark that for $\numS$ large enough, $\mu_\numS \geq 1-q$ and by \eqref{eq:reg_1}, 
\begin{equation*}
\sqrt{\numS} \ratio_\numS(\pi_{\numS}^{\mathbf{r}},\mathcal{B}(\mu_\numS)) = \sqrt{\numS}  \frac{ \mu_\numS - (1 - q)}{(1-q) (1- \mu_\numS)} \mathbb{P} \left( D_{r_\numS:\numS} = 0 \right).
\end{equation*}
By assumption, 
\begin{equation*}
\sqrt{\numS}  \frac{ \mu_\numS - (1 - q)}{(1-q) (1- \mu_\numS)} \to \frac{\delta}{(1-q)q} \quad \text{as $\numS \to \infty$}.
\end{equation*} 
Moreover,
\begin{align}
 \mathbb{P}(D_{r_\numS:\numS} = 0) 
 &= \mathbb{P} \left(\sum_{i=1}^\numS D_i \leq \numS - r_\numS \right)  \nonumber \\
&=  \mathbb{P} \left( \frac{1}{\sqrt{ \numS \mu_\numS (1-\mu_\numS)}} \sum_{i=1}^\numS \left(  D_i - \mu_\numS \right) -  \frac{n (1-\mu_\numS) - r_n}{\sqrt{\numS \mu_\numS(1-\mu_\numS)}}  \leq  0 \right) \label{eq:proba_rewrite} .
\end{align}
To conclude the proof, we show that 
\begin{equation}
\label{eq:weak_cv}
\frac{1}{\sqrt{ \numS \mu_\numS (1-\mu_\numS)}} \sum_{i=1}^\numS \left(  D_i - \mu_\numS   \right) -  \frac{n (1-\mu_\numS) - r_n}{\sqrt{ \numS \mu_\numS (1-\mu_\numS)}}\Longrightarrow \mathcal{N} \left(\frac{\delta + \ell}{\sqrt{q(1-q)}},1 \right) \quad \text{as $\numS \to \infty$}.
\end{equation}
Remark that,
\begin{equation*}
 \frac{n (1- \mu_\numS) - r_n}{\sqrt{ \numS \mu_\numS (1-\mu_\numS)}} = \frac{n (1-q- \mu_\numS) + q \numS - r_n}{\sqrt{ \numS \mu_\numS (1-\mu_\numS)}} \to - \frac{\delta + \ell}{\sqrt{q(1-q)}} \quad \text{as $\numS \to \infty$}.
\end{equation*}
Hence, it also converges in distribution. Moreover, for all $\numS \geq 1$ and $m \leq \numS$,  let 
$$Z_{\numS,m} := \frac{1}{\sqrt{\numS \mu_\numS (1-\mu_\numS)}} \left( D_m -  \mu_\numS \right).$$
We have that for every $1 \leq m \leq \numS$, 
\begin{equation*}
\mathbb{E} \left[ Z_{\numS,m}^2 \right] = \frac{\mu_\numS (1-\mu_\numS)^2 + (1-\mu_\numS)\mu_\numS^2}{\numS \mu_\numS (1-\mu_\numS)} = \frac{1}{\numS}.  
\end{equation*}
Hence, for any $\numS \geq 1$,
\begin{equation}
\label{eq:cond1_LF_CLT}
\sum_{m=1}^\numS \mathbb{E} \left[ Z_{\numS,m}^2 \right] = \frac{\mu_\numS (1- \mu_\numS)}{(1-q) q} \to 1 \quad \text{as $\numS \to \infty$}.
\end{equation}
Furthemore, let $\epsilon > 0$ then, for $\numS$ large enough, $|Z_{\numS,m}| < \epsilon$ almost surely for every $m \leq \numS$. Hence, 
\begin{equation}
\label{eq:cond2_LF_CLT}
\lim_{\numS \to \infty} \sum_{m=1}^\numS \mathbb{E} \left[ Z_{n,m}^2 ; \, |Z_{n,m}| > \epsilon \right]  = 0.
\end{equation}
By \eqref{eq:cond1_LF_CLT} and \eqref{eq:cond2_LF_CLT} and \Cref{thm:LF_CLT_thm} stated below, we conclude that,
\begin{equation}
\label{eq:normal_cv}
\frac{1}{\sqrt{ \numS \mu_\numS (1-\mu_\numS)}} \sum_{m=1}^\numS \left(  D_i - \mu_\numS   \right) = \sum_{m=1}^\numS Z_{\numS,m} \Longrightarrow \mathcal{N}(0,1) \quad \text{as $\numS \to \infty$}.
\end{equation}
\begin{theorem}[Linderberg-Feller theorem (see Theorem 3.4.10 in \cite{durrett2019probability})] 
\label{thm:LF_CLT_thm}
For each $\numS$, let $X_{n,m},$ $1 \leq m \leq n$, be independent random variables with $\mathbb{E}[ X_{n,m}] = 0$. Suppose,
\begin{enumerate}
\item $\sum_{m=1}^\numS \mathbb{E} \left[ X_{n,m}^2 \right]  \to \sigma^2 >0$,
\item For all $\epsilon > 0$, $\lim_{\numS \to \infty} \sum_{m=1}^\numS \mathbb{E} \left[ X_{n,m}^2 ; \, |X_{n,m}| > \epsilon \right] = 0$. 
\end{enumerate}
Then,
$\sum_{m=1}^\numS X_{n,m}  \Longrightarrow \mathcal{N}(0,\sigma^2)$
as $n \to \infty$.
\end{theorem}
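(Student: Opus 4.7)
The plan is to prove the Lindeberg--Feller theorem through characteristic functions and Lévy's continuity theorem. Writing $S_n = \sum_{m=1}^n X_{n,m}$ and $\sigma_{n,m}^2 = \mathbb{E}[X_{n,m}^2]$, independence gives $\phi_{S_n}(t) = \prod_{m=1}^n \phi_{n,m}(t)$, and the target is to show $\phi_{S_n}(t) \to e^{-\sigma^2 t^2/2}$ pointwise in $t \in \mathbb{R}$, after which Lévy continuity delivers $S_n \Longrightarrow \mathcal{N}(0,\sigma^2)$.

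The first step is a quadratic Taylor expansion of each factor. Using $|e^{ix} - 1 - ix + x^2/2| \leq \min(|x|^2, |x|^3/6)$ together with the centering hypothesis $\mathbb{E}[X_{n,m}] = 0$, one obtains
\begin{equation*}
\left| \phi_{n,m}(t) - 1 + \tfrac{t^2}{2}\sigma_{n,m}^2 \right| \;\leq\; \mathbb{E}\!\left[\min\!\left(t^2 X_{n,m}^2,\; |t|^3 |X_{n,m}|^3/6\right)\right].
\end{equation*}
I would split the expectation on $\{|X_{n,m}| \leq \epsilon\}$ and its complement, bounding the first piece by $|t|^3 \epsilon X_{n,m}^2/6$ and the second by $t^2 X_{n,m}^2$. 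Summing over $m$ and using hypothesis 1 to control $\sum_m \sigma_{n,m}^2 \to \sigma^2$ along with the Lindeberg condition, the aggregate remainder is at most $|t|^3 \epsilon \sigma^2 / 6 + o(1)$. Sending $n \to \infty$ then $\epsilon \to 0$ gives $\sum_m (\phi_{n,m}(t) - 1) \to -t^2 \sigma^2/2$.

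The second step is to convert the product $\prod_m \phi_{n,m}(t)$ into $\exp\!\left(\sum_m (\phi_{n,m}(t) - 1)\right)$ via the elementary inequality $|\prod_m (1+z_m) - \exp(\sum_m z_m)| \leq \exp(\sum_m |z_m|)\,\sum_m |z_m|^2$, which is valid under uniform smallness of the $z_m$. The required uniform negligibility $\max_m |\phi_{n,m}(t) - 1| \to 0$ comes from the estimate $\max_m \sigma_{n,m}^2 \leq \epsilon^2 + \sum_m \mathbb{E}[X_{n,m}^2; |X_{n,m}| > \epsilon]$, which tends to $\epsilon^2$ under hypothesis 2 and then to $0$ as $\epsilon$ is arbitrary. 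Chaining this with the first step produces $\phi_{S_n}(t) \to e^{-\sigma^2 t^2/2}$.

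The main obstacle I anticipate is the careful bookkeeping of the two nested limits $n \to \infty$ and $\epsilon \to 0$: the Taylor-remainder argument requires invoking hypotheses 1 and 2 with $\epsilon$ fixed and only afterwards sending $\epsilon \to 0$, while the product-to-exponential step needs an independent extraction of asymptotic negligibility from the Lindeberg condition. A cleaner alternative organization, closer to Durrett's exposition, is to bound $|\phi_{S_n}(t) - \prod_m e^{-t^2 \sigma_{n,m}^2 /2}|$ directly through the same Taylor remainder and then note that $\prod_m e^{-t^2 \sigma_{n,m}^2 /2} = e^{-t^2 (\sum_m \sigma_{n,m}^2)/2} \to e^{-t^2 \sigma^2/2}$ by hypothesis 1, thereby folding the product-to-exponential conversion into the analytic estimate and avoiding any separate uniform-smallness argument.
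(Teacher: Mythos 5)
The paper does not prove this result; Theorem~\ref{thm:LF_CLT_thm} is quoted verbatim as Theorem~3.4.10 of Durrett and invoked as a black-box tool, so there is no in-paper proof to compare against. Your sketch is the standard characteristic-function argument and is correct --- the Taylor remainder estimate, the split at $\{|X_{n,m}|\le\epsilon\}$, the uniform-negligibility bound $\max_m \sigma_{n,m}^2 \le \epsilon^2 + \sum_m \mathbb{E}[X_{n,m}^2;\,|X_{n,m}|>\epsilon]$, and the nested $n\to\infty$ then $\epsilon\to 0$ passage are all handled properly --- and the ``cleaner alternative'' you flag at the end, comparing $\prod_m \phi_{n,m}(t)$ directly with $\prod_m e^{-t^2\sigma_{n,m}^2/2}$ via the complex-product inequality $\left|\prod_m z_m - \prod_m w_m\right| \le \theta^{n-1}\sum_m |z_m - w_m|$ (for $|z_m|,|w_m|\le\theta\le 1$), is exactly Durrett's route, so your proposal follows essentially the same approach as the source the paper cites.
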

We thus conclude from Slutsky's theorem that \eqref{eq:weak_cv} is satisfied  and for any $\delta' > 0$ such that $\mu_\numS - (1-q) \sim \frac{\delta}{\sqrt{\numS}}$,  we have,
\begin{equation*}
\sqrt{\numS} \ratio_\numS(\pi_{\numS}^{\mathbf{r}},\mathcal{B}(\mu_\numS)) \to \frac{\delta}{q(1-q)} \left(1 - \Phi \left(\frac{\ell + \delta}{\sqrt{q(1-q)}} \right) \right) \quad \text{as $\numS \to \infty$}.
\end{equation*}

We now consider the case where there exists $\delta > 0$, such that $ 1-q- \mu_\numS  \sim \frac{\delta}{\sqrt{\numS}}$. Note that \eqref{eq:reg_2} together with \eqref{eq:proba_rewrite} implies that
\begin{equation*}
\sqrt{\numS} \ratio_\numS(\pi_{\numS}^{\mathbf{r}},\mathcal{B}(\mu_\numS)) = \sqrt{\numS} \frac{1- q - \mu_\numS}{q \mu_\numS} \mathbb{P} \left( \frac{1}{\sqrt{ \numS q(1-q)}} \sum_{i=1}^\numS \left(  D_i - \mu_\numS   \right) -  \frac{n (1- \mu_\numS) - r_n}{\sqrt{\numS q(1-q)}}  >  0 \right).
\end{equation*}
We hence conclude by a similar argument that
$$
\mathbb{P} \left( \frac{1}{\sqrt{ \numS q(1-q)}} \sum_{i=1}^\numS \left(  D_i - \mu_\numS   \right) -  \frac{n (1- \mu_\numS) - r_n}{\sqrt{\numS q(1-q)}}  >  0 \right) \to 1 - \Phi \left( \frac{\delta - \ell}{\sqrt{q(1-q)}} \right) \quad \text{as $\numS \to \infty$}
$$
and
\begin{equation*}
\sqrt{\numS} \ratio_\numS(\pi_{\numS}^{\mathbf{r}},\mathcal{B}(\mu_\numS)) \to \frac{\delta}{q(1-q)} \left(1 - \Phi \left(\frac{\delta - \ell}{\sqrt{q(1-q)}} \right) \right) \quad \text{as $\numS \to \infty$}.
\end{equation*}

\vspace{2mm}
\noindent \textit{Conclusion of step 1.} We now prove that \eqref{eq:asympt_form_main} holds. 

For any $\numS \in \mathbb{N}$,  let $\mu_{\numS}^{*} := \argmax_{\mu \in [0,1]} \ratio_\numS(\pi_{\numS}^{ \mathbf{r}},\mathcal{B}(\mu))$. Note that this sequence is well defined because $\mu \mapsto \ratio_\numS(\pi_{\numS}^{ \mathbf{r}},\mathcal{B}(\mu))$ is continuous on a compact set. First remark that for any sequence $(\mu_\numS')_{\numS \in \mathbb{N}}$, if there exists a sequence $(\mu_\numS'')_{\numS \in \mathbb{N}}$ such that
$$ \lim_{\numS \to \infty} \ratio_\numS(\pi_{\numS}^{ \mathbf{r}},\mathcal{B}(\mu'_\numS)) <  \lim_{\numS \to \infty} \ratio_\numS(\pi_{\numS}^{ \mathbf{r}},\mathcal{B}(\mu''_\numS)),$$
then, one cannot have $\mu'_\numS \in \argmax_{\mu \in [0,1]} \ratio_\numS(\pi_{\numS}^{ \mathbf{r}},\mathcal{B}(\mu))$ for all $\numS \in \mathbb{N}$. We will use this property to characterize the asymptotic behavior of $(\mu_\numS^*)_{\numS \in \mathbb{N}}$.

We consider the sequence, $\left( \sqrt{\numS} |\mu^*_{\numS} - (1- q)| \right)_{\numS \in \mathbb{N}}$ and first assume that it is unbounded. Therefore, we can consider an increasing mapping $\psi$  from $\mathbb{N}$ to $\mathbb{N}$ such that the induced  subsequence $\left( \sqrt{\psi(\numS)} |\mu^*_{\psi(\numS)} - (1- q)| \right)_{\numS \in \mathbb{N}}$ converges to $\infty$. Moreover, let $\mathbf{r}_\psi$ denote the subsequence defined as $ \left( r_{\psi(\numS)} \right)_{\numS \in \mathbb{N}}$.
By case $(b)$ defined above, this implies that
\begin{equation*}
\lim_{\numS \to \infty} \sqrt{\numS}  \ratio_\numS(\Pi_\numS^{\mathbf{r}_{\psi}},\mathcal{B}(\mu^*_{\psi(\numS)} )) = 0.
\end{equation*}
In turn, consider the sequence $ \left( \hat{\mu}_\psi(\numS) \right)_{\numS \in \mathbb{N}}$ such that, $ \sqrt{\psi(\numS)} |\hat{\mu}_{\psi(\numS)} - (1- q)| \to \delta \in \mathbb{R}^*$. It follows from case $(c)$ that  
\begin{equation*}
\lim_{\numS \to \infty} \sqrt{\numS}  \ratio_\numS(\Pi_\numS^{\mathbf{r}_{\psi}},\mathcal{B}(\hat{\mu}_{\psi(\numS)}  )) > 0,
\end{equation*}
which contradicts the optimality of $\mu^*_{\psi(\numS)}$ for $\numS$ large enough.

Hence, $\left( \sqrt{\numS} |\mu^*_{\numS} - (1- q)| \right)_{\numS \in \mathbb{N}}$ is bounded. By Bolzano Weirestrass, we can consider a subsequence that converges. Remark that, if the limit of this subsequence is $0$, we obtain a contradiction by the same argument using case $(a)$ and case $(c)$. We thus conclude that any subsequence has to satisfy,
$$\lim_{\numS \to \infty} \sqrt{\psi(\numS)} \left( \mu^*_{\psi(\numS)} - 1 -q  \right) = \delta,$$
with $\delta \in \mathbb{R} \setminus \{0 \}$. Case $(c)$ implies that,
\begin{align*}
\lim_{\numS \to \infty} \sqrt{\numS}  \ratio_\numS(\Pi_\numS^{\mathbf{r}_{\psi}},\mathcal{B}(\mu^*_{\psi(\numS)} )) = H^+ \left( \delta, \ell \right) \qquad \text{if $\delta > 0$} \\
\lim_{\numS \to \infty} \sqrt{\numS}  \ratio_\numS(\Pi_\numS^{\mathbf{r}_{\psi}},\mathcal{B}(\mu^*_{\psi(\numS)} )) = H^{-} \left( - \delta, \ell \right) \qquad \text{if $\delta < 0$},
\end{align*}
where $H^{+}(\delta,\ell) :=  \frac{\delta}{q(1-q)} \left(1 - \Phi \left( \frac{\delta + \ell}{\sqrt{q(1-q)}} \right) \right)$ and $H^{-}(\delta,\ell) := \frac{\delta}{q(1-q)} \left(1 - \Phi \left( \frac{\delta - \ell}{\sqrt{q(1-q)}} \right) \right)$.
Furthermore, remark that if $\delta >0$, we must have $H^{+} \left(\delta, \ell \right) = \max \left( \max_{\delta' \geq 0} H^{+} \left(\delta', \ell \right), \max_{\delta' \geq 0} H^{-} \left( \delta', \ell \right) \right) $ otherwise, we can construct a sequence that strictly improves the limit. A similar result holds if $\delta <0$. 

Therefore,  for any converging subsequence $\left( \sqrt{\psi(\numS)} |\mu^*_{\psi(\numS)} - (1- q)| \right)_{\numS \in \mathbb{N}}$ we have that 
$$
\lim_{\numS \to \infty} \sqrt{\numS}  \ratio_\numS(\Pi_\numS^{\mathbf{r}_{\psi}},\mathcal{B}(\mu^*_{\psi(\numS)} )) = \max 
\left[
\max_{\delta \geq 0 } H^{+}(\delta,\ell'),
\max_{\delta \geq 0 } H^{-}(\delta,\ell')
\right],
$$
which implies that,
$$
\lim_{\numS \to \infty} \sup_{F \in \mathcal{F}} \sqrt{\numS} \ratio_\numS(\pi_{\numS}^{\mathbf{r}}, F) = \lim_{\numS \to \infty} \sqrt{\numS}  \ratio_\numS(\Pi_\numS^{\mathbf{r}},\mathcal{B}(\mu^*_{\numS} )) = \max 
\left[
\max_{\delta \geq 0 } H^{+}(\delta,\ell'),
\max_{\delta \geq 0 } H^{-}(\delta,\ell')
\right].
$$

\noindent \textbf{Step 2: $\ell = \infty$.}
We show that in this case,
\begin{equation*}
\lim_{\numS \to \infty} \sup_{F \in \mathcal{F}} \ratio_\numS \left( \pi^{\mathbf{r}}_\numS, F \right) = \infty.
\end{equation*}
Remark that, it is sufficient to construct a sequence $\mu_\numS$ such that,
\begin{equation*}
\lim_{\numS \to \infty} \ratio_\numS \left( \pi^{\mathbf{r}}_\numS, \mathcal{B} \left( \mu_\numS \right) \right) = \infty
\end{equation*}

We  assume that for $n$ large enough, $r_\numS \leq q \numS$  (the case $r_\numS \geq q \numS$  is similar).

Let's consider the sequence of means such that for each $n \geq 1$, $\mu_\numS=  1- \frac{r_\numS}{\numS}$. For $\numS$ large enough we have that $r_\numS \leq q \numS$ which implies that $\mu_\numS \geq 1-q$.  Therefore, for $n$ large enough the expected relative regret of $\pi^{\mathbf{r}}_\numS$ against $\mathcal{B} \left(\mu_\numS \right)$ is given by \eqref{eq:reg_1}.
Remark that,
\begin{align}
\mathbb{P} \left( D_{r_\numS:\numS} = 0 \right) &= \mathbb{P}\left( \sum_{i=1}^\numS D_i \leq \numS - r_\numS \right) \nonumber \\
&\stackrel{(a)}{=} \mathbb{P}\left( \sum_{i=1}^\numS \left( D_i  - \mu_\numS \right) \leq 0 \right) \nonumber \\
&= \mathbb{P}\left(\frac{1}{\sqrt{\numS \mu_\numS \left( 1- \mu_\numS \right) }} \sum_{i=1}^\numS \left(  D_i  - \mu_\numS \right) \leq 0 \right) \stackrel{(b)}{\to} \frac{1}{2} \quad \text{as $\numS \to \infty$}, \label{eq:eq:LF_centered_demand}
\end{align}
where $(a)$ holds as  $\mu_\numS = 1 - \frac{r_\numS}{\numS}$ and $(b)$ follows from \eqref{eq:normal_cv}.
We further remark that,
\begin{equation}
\label{eq:cv_first_term}
\sqrt{\numS}\frac{\mu_\numS - (1 -q )}{(1-q) (1- \mu_\numS)}  \geq \sqrt{\numS}\frac{\mu_\numS - (1 -q )}{(1-q)q} =  \frac{(q{\numS}-r_\numS)}{(1-q)q \sqrt{\numS}} \to \infty \quad \text{as $\numS \to \infty$},
\end{equation}
where the limit holds because $\ell = \infty$. Finally, \eqref{eq:normal_cv} and  \eqref{eq:cv_first_term} imply that 
\begin{equation*}
\lim_{\numS \to \infty} \sqrt{\numS} \ratio_\numS  \left( \OS{r_\numS},B(\mu_\numS) \right) = \infty.
\end{equation*}

\end{proof}

\begin{proof}[\textbf{Proof of \Cref{lem:SAA_opt}}]
Define the function $H$ from $\mathbb{R}$ to $\mathbb{R}$ as, 
\begin{equation*}
H : \ell \longmapsto  \max 
\left[
\max_{\delta \geq 0 } H^{+}(\delta,\ell),
\max_{\delta \geq 0 } H^{-}(\delta,\ell)
\right].
\end{equation*}
We show that $H$ is an even function increasing on $[0, \infty)$.

We remark that for any $\ell \in \mathbb{R}$, 
\begin{equation*}
H^{+}(\delta,\ell) = H^{-}(\delta,-\ell),
\end{equation*}
which shows that $H$ is even. To show that it is increasing on $[0,\infty)$, consider $\ell \geq 0$. 
As $\Phi$ is increasing, we have for any $\delta \geq 0$ that $\Phi \left(  \frac{\delta + \ell}{\sqrt{q (1-q)}} \right) \geq \Phi \left(  \frac{\delta - \ell}{\sqrt{q (1-q)}} \right)$ and , $H(\ell) = \max_{\delta \geq 0}  H^{-}(\delta,\ell)$.
Let $\delta^{*} \in \argmax_{\delta \geq 0} H^{-}(\delta,\ell)$.
For any $\ell' \geq \ell$, we note that $\Phi \left( \frac{\delta^{*} - \ell}{\sqrt{q(1-q)}} \right)  \geq \Phi \left( \frac{\delta^{*} - \ell'}{\sqrt{q(1-q)}} \right)$.
Therefore,
$H^{-}(\delta^{*},\ell) \leq H^{-}(\delta^{*},\ell')$. We conclude that,
\begin{equation*}
H(\ell) =  H^{-}(\delta^{*},\ell) \leq  H^{-}(\delta^{*},\ell') \leq H(\ell').
\end{equation*}

Since $H$ is an even increasing function, its minimum is achieved at $0$ and we conclude that, for every $\ell \in \mathbb{R}$
\begin{equation*}
\max 
\left[
\max_{\delta \geq 0 } H^{+}(\delta,\ell),
\max_{\delta \geq 0 } H^{-}(\delta,\ell)
\right] = H(\ell) \geq H(0) = C^*.
\end{equation*}
\end{proof}

\begin{proof}[\textbf{Proof of \Cref{lem:asymptotic_fast}}]
For the sake of simple notations, consider the sequence $\alpha_\numS = 1- \mu_\numS$ and consider a converging sequence $(\beta_n)_{n \in \mathbb{N}}$ that  is a subsequence of $(\alpha_n)_{n \in \mathbb{N}}$. Let $\beta$ denotes its limit. We assume that for $n$ large enough $\beta_n \in [0,q]$ therefore, $\beta \in [0,q]$. (The case where  $\beta_n \in [q,1]$ is treated by similar arguments). 

For every $r \in \{1, \ldots, \numS\}$,  and $\alpha \in [0,1]$, let
\begin{equation*}
\phi_{r}^{\numS}(\alpha) = \ratio_n \left( \OS{r}, \mathcal{B}(1-\alpha) \right)
\end{equation*}
and remark that since $\beta_n \leq q$ for $n$ large enough, we have by \Cref{lem:OS_vs_bern} that 
\begin{equation*}
\phi_{r}^{\numS}(\beta_\numS) =  \sqrt{\numS}\frac{(q-\beta_n)}{(1-q) \beta_\numS}  B_{r_n,\numS}(\beta_\numS).
\end{equation*}

\vspace{2mm}
\textbf{Case 1: $\beta = 0$.} 
By Taylor expansion, we obtain that for every $n \geq 1$, there exists $\xi_n \in [0, \beta_n]$ such that,
\begin{equation}
\label{eq:taylor_exp_0}
\phi_{r_n}^{\numS}(\beta_n) = \phi_{r_n}^{\numS}(0) + \phi_{r_n}^{'\numS}(\xi_n) \beta_n  .
\end{equation}
Since $B_{r_n,\numS}'(\alpha) = \numS b_{r_n-1,\numS-1}(\alpha)$, we obtain that for $\alpha \in [0,q]$,
\begin{equation*}
\phi_{r_n}^{'\numS}(\alpha) = \frac{\numS \left(q - \alpha \right) \alpha b_{r_n-1,\numS-1}(\alpha) - q B_{r_n,\numS} (\alpha)} {(1-q)\alpha^2} \leq \frac{q \numS b_{r_n-1,\numS-1}(\alpha) }{(1-q)\alpha}.
\end{equation*}
Furthermore, 
\begin{align}
\phi_{r_n}^{'\numS}(\xi_n) \beta_n &\leq \frac{q \numS}{1-q} {\numS - 1 \choose r_n -1 } \xi_n^{r_n-2}(1-\xi_n)^{n-r_n} \beta_n \nonumber\\
&\leq \frac{q \numS}{1-q} {\numS - 1 \choose r_n -1 } \xi_n^{r_n-2} \beta_n \stackrel{(a)}{\leq} \frac{q \numS}{1-q} {\numS - 1 \choose r_n -1 } \beta_n^{r_n-1}
\label{eq:bdd_derivative},
\end{align}
where $(a)$ holds as $\xi_\numS \leq \beta_\numS$ and $r_\numS \geq 2$ for $\numS$ large enough. The latter holds because $q >0$ and $\lim_{\numS \to \infty} \frac{|q {\numS}-r_\numS|}{\sqrt{\numS}} < \infty$ which implies that $r_\numS \to \infty$ as $\numS$ gets large.

Remarking that $\phi_{r_n}^{\numS}(0) = 0$ and that ${\numS - 1 \choose r_n -1 } \sim q {\numS  \choose r_n }$, we conclude from \eqref{eq:taylor_exp_0} and \eqref{eq:bdd_derivative} that it is sufficient to show that, $\numS^{3/2} {\numS \choose r_n} \beta_n^{r_n-1} \to 0$. Note that, $r_\numS = q\numS + O(\sqrt{\numS} )$ therefore,
\begin{align*}
{\numS \choose r_n} \sim \sqrt{\frac{\numS}{2\pi r_\numS \left(\numS- r_\numS \right)}} \frac{\numS^\numS}{r_\numS^{r_\numS} \left(\numS- r_\numS \right)^{ \numS- r_\numS}} \sim \frac{1}{\sqrt{ 2\pi q(1-q)n}} \frac{\numS^\numS}{r_\numS^{r_\numS} \left(\numS- r_\numS \right)^{ \numS- r_\numS}}.
\end{align*}
Furthermore, we have that, $ \frac{r_\numS}{\numS} \to q$. Therefore,
\begin{align*}
\frac{\numS^\numS}{r_\numS^{r_\numS} \left(\numS- r_\numS \right)^{ \numS- r_\numS}} = \exp \left( r_\numS \log \left( \frac{\numS}{r_\numS} \right) + \left(\numS- r_\numS \right) \log \left( \frac{\numS}{\numS- r_\numS} \right)  \right)
\stackrel{(a)}{=} \exp \left( O(n) \right),
\end{align*}
where $(a)$ holds because $\frac{\numS}{r_\numS} \to \frac{1}{q}$ and $\frac{\numS}{\numS - r_\numS} \to \frac{1}{1-q}$.
Moreover, note that $n = o \left( r_n \log( \beta_\numS) \right)$ because $\beta_{\numS} \to 0$. As a consequence,
\begin{align*}
\numS^{3/2} {\numS \choose r_n} \beta_n^{r_n-1} &\sim \frac{1}{\sqrt{2 \pi q(1-q)}} \cdot  \numS \cdot \exp \left(O(\numS) \right) \cdot \beta_n^{r_n-1}\\
&= \frac{1}{\sqrt{2 \pi q(1-q)}} \cdot \numS \cdot \exp \left( (r_n - 1) \log( \beta_\numS) + o( r_n \log( \beta_\numS) \right) \to 0,
\end{align*}
where the limit holds because $(r_n - 1) \log( \beta_\numS) \to - \infty$ and $\numS = \exp(o \left( r_n \log( \beta_\numS) \right))$.

\vspace{2mm}
\textbf{Case 2: $\beta > 0$.} 
Note that, by assumption on the sequence $(\alpha_n)_{\numS \in \mathbb{N}}$, we have that for $\numS$ large enough, $ \beta_\numS \leq \frac{r_\numS}{\numS}$. Indeed, if $\beta_\numS > \frac{r_\numS}{\numS}$, we would have that $\sqrt{\numS} (q - \beta_\numS) \leq \sqrt{\numS}( q- \frac{r_\numS}{\numS})$ which, in turn, would contradict that  $ \lim_{\numS \to \infty} \frac{| q\numS - r_\numS| }{\sqrt{\numS}} < \infty$ since  $\lim_{\numS \to \infty} \sqrt{\numS} |q - \beta_n| = \infty$. Therefore, for $\numS$ large enough, we have,
\begin{align*}
\mathbb{P} \left( D_{r_\numS:\numS} = 0 \right) &= \mathbb{P}\left(  \sum_{i=1}^\numS  D_i < \numS (1 - \beta_\numS) - r_\numS + \numS \beta_\numS  \right)\\
&= \mathbb{P}\left(  \sum_{i=1}^\numS  D_i < \numS \mathbb{E} \left[D \right] - \left( r_\numS - \numS \beta_\numS\right)  \right)
\stackrel{(a)}{\leq} e^{-2 (\beta_\numS- \frac{r_\numS}{\numS})^2 \numS}
= e^{-2 \left( \sqrt{\numS}  ( \beta_\numS- q)  + \frac{q \numS - r_\numS}{\sqrt{\numS}}  \right)^2 },
\end{align*}
where $(a)$ follows from  Hoeffding inequality for bounded random variables (Theorem 2 in \cite{hoeffding1994probability}).
 Moreover, note that $\left( \sqrt{\numS}  ( \beta_\numS- q)  + \frac{q \numS - r_\numS}{\sqrt{\numS}}  \right)^2 = \left( \sqrt{\numS}  ( \beta_\numS- q) \right)^2 \left(1 + o(1) \right)$.  Hence,
\begin{align*}
\sqrt{\numS} \ratio_\numS(\pi_{\numS}^{ \mathbf{r}},\mathcal{B}(1-\beta_\numS)) \sim \sqrt{\numS}\frac{(q-\beta_\numS)}{(1-q) \beta} \mathbb{P} \left( D_{r_\numS:\numS} = 0 \right)
\leq  \sqrt{\numS}\frac{(q-\beta_\numS)}{(1-q) \beta}  e^{-2 \left( \sqrt{\numS}  ( \beta_\numS- q) \right)^2 \left(1 + o(1) \right)} \to 0,
\end{align*}
where the limit holds because $\sqrt{\numS} | \beta_\numS - q| \to \infty$ as $\numS \to \infty$. 
Therefore, for any converging subsequence  $(\beta_{\numS})_{\numS \in \mathbb{N}}$ of the sequence $(\alpha_{\numS})_{\numS \in \mathbb{N}}$, we have$\sqrt{\numS} \ratio_\numS(\pi_{\numS}^{ \mathbf{r}},\mathcal{B}(1-\beta_\numS)) \to 0$. This implies the same result on the sequence $(\alpha_{\numS})_{\numS \in \mathbb{N}}$ itself.
\end{proof}

%%%%%%%%%%%%%%%%%
%%%%%%%%%%%%%%%%%
\setcounter{equation}{0}
\setcounter{proposition}{0}
\setcounter{lemma}{0}
\setcounter{theorem}{0}

\section{Additional Results and their Proofs} \label{app:add_aux}

\begin{lemma}
\label{lem:epigraph_form}
Fix $\numS \ge 1$ and  $\pi \in \Pi_\numS$. Then,
 problem \eqref{eq:ratio} of finding the worst case performance for the policy $\pi$ is equivalent to
\begin{subequations}
\nonumber
\begin{alignat*}{2}
\sup_{F \in \mathcal{F}} \ratio_\numS(\pi, F) \:=\: &\! \inf_{z \in \mathbb{R}_{+} }        &\qquad& z \\
&\text{s.t.} &      & \mathcal{C} \left( \pi, F, \numS \right) \leq (z+1) \opt(F) \qquad \forall F \in \mathcal{F}, 
\end{alignat*}
\end{subequations}
in the sense that both problems admit the same value.
\end{lemma}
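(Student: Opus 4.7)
The plan is to establish the standard epigraph reformulation, the only nontrivial aspect being the handling of the convention for distributions with $\opt(F) = 0$ (i.e.\ point masses). Let me denote $V := \sup_{F \in \mathcal{F}} \ratio_\numS(\pi, F)$ and $W$ the infimum value on the right-hand side, and prove $V \le W$ and $W \le V$ by case analysis on whether $\opt(F) > 0$.

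For the direction $V \le W$, I would fix any feasible $z \ge 0$ for the epigraph problem and an arbitrary $F \in \mathcal{F}$. If $\opt(F) > 0$, dividing $\mathcal{C}(\pi, F, \numS) \le (z+1)\opt(F)$ by $\opt(F)$ yields $\ratio_\numS(\pi, F) \le z$ directly. If $\opt(F) = 0$, the constraint forces $\mathcal{C}(\pi, F, \numS) \le 0$, and since costs are non-negative, $\mathcal{C}(\pi, F, \numS) = 0$; the convention stated in the footnote then gives $\ratio_\numS(\pi, F) = 0 \le z$. Taking the supremum over $F$ and then the infimum over $z$ delivers $V \le W$.

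For the reverse direction $W \le V$, the statement is trivial if $V = \infty$, so assume $V < \infty$ and I would show that $z = V$ is feasible for the epigraph problem. When $\opt(F) > 0$, the definition $\ratio_\numS(\pi, F) \le V$ rearranges to $\mathcal{C}(\pi, F, \numS) \le (V+1)\opt(F)$. When $\opt(F) = 0$, I claim that $\mathcal{C}(\pi, F, \numS) = 0$ as well: otherwise the convention does not apply, the ratio is $+\infty$, contradicting $V < \infty$. In this subcase, $(V+1)\opt(F) = 0$ and the constraint $\mathcal{C}(\pi, F, \numS) \le 0$ holds with equality. Thus $z = V$ is feasible, which gives $W \le V$.

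The only delicate point is confirming that the footnote's convention $\ratio_\numS(\pi, F) = 0$ when $\mathcal{C}(\pi, F, \numS) = \opt(F) = 0$ is consistent with the epigraph constraint at $\opt(F) = 0$, and that when $\opt(F) = 0$ with $\mathcal{C}(\pi, F, \numS) > 0$ the ratio is $+\infty$ so that such $F$ automatically drives $V = \infty$ (whence the equivalence also holds vacuously, since $W = \infty$ in that case too). Once this bookkeeping is in place, the argument is routine.
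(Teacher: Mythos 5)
Your proposal is correct and follows essentially the same route as the paper: both hinge on a case analysis according to whether $\opt(F) > 0$ or $\opt(F) = 0$, and on verifying that the footnote's convention ($\ratio_\numS(\pi,F)=0$ when $\mathcal{C}(\pi,F,\numS)=\opt(F)=0$) is consistent with the epigraph constraint. The only cosmetic difference is that the paper organizes the argument as a pointwise equivalence of the two constraint sets (for each fixed $F$ and $z$), whereas you split it into the two inequalities $V\le W$ and $W\le V$; the substantive content is identical.
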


\begin{proof}[ \textbf{Proof of \Cref{lem:epigraph_form}}]
Fix $\pi \in \Pi_\numS$. We have that $\sup_{F \in \mathcal{F}} \ratio_\numS(\pi, F) \ge 0$ and
\begin{subequations}
\label{eq:whole_class_pb}
\begin{alignat}{2}
\sup_{F \in \mathcal{F}} \ratio_\numS(\pi, F) \:=\: &\! \inf_{z \in \mathbb{R}_{+} }        &\qquad& z \\
&\text{s.t.} &      & \ratio_\numS(\pi, F) \leq z \qquad \forall F \in \mathcal{F}. \label{eq:constraint_epi}
\end{alignat}
\end{subequations}
We first show that for every $F \in \mathcal{F}$ and $z \in \mathbb{R}_{+}$, the following equivalence holds.
$$
 \ratio_\numS(\pi, F) \leq z \quad \mbox{if and only if} \quad \mathcal{C} \left( \pi, F, \numS \right) \leq (z+1) \opt(F) .
$$
Recall that, for every $F \in \mathcal{F}$, we have $\ratio_\numS(\pi, F) = \frac{\mathcal{C} \left( \pi, F, \numS \right)}{\opt(F)} - 1$.
Moreover, $\opt(F) \geq 0$ for all $F \in \mathcal{F}$.  When $\opt(F) > 0$, the equivalence holds trivially. If $\opt(F) = 0$, we consider two cases. If $\mathcal{C} \left( \pi, F, \numS \right) > 0$, we remark that both inequalities do not hold for any $z \in \mathbb{R}_{+}$. If $\mathcal{C} \left( \pi, F, \numS \right) = 0$, recall that by convention we set  $\ratio_\numS(\pi, F) = 0$. Hence, both inequalities are satisfied for all $z \in \mathbb{R}_{+}$.

As a consequence, problem \eqref{eq:whole_class_pb} is equivalent to
\begin{subequations}
\begin{alignat*}{2}
\sup_{F \in \mathcal{F}} \ratio_\numS(\pi, F) \:=\: &\! \inf_{z \in \mathbb{R}_{+} }        &\qquad& z \\
&\text{s.t.} &      & \mathcal{C} \left( \pi, F, \numS \right) \leq (z+1) \opt(F) \qquad \forall F \in \mathcal{F}. 
\end{alignat*}
\end{subequations}
\end{proof}

\begin{lemma}
\label{lem:single_cond}
For every $\numS$, the following two conditions cannot hold simultaneously.
\begin{eqnarray*}
\sup_{\mu \in [0,1-q]} \ratio_{\numS} \left(\OS{1},\mathcal{B} \left( \mu \right) \right) &>&\sup_{\mu \in [1-q,1]} \ratio_{\numS} \left(\OS{1},\mathcal{B} \left( \mu \right) \right)\\
\sup_{\mu \in [0,1-q]} \ratio_{\numS} \left(\OS{\numS},\mathcal{B} \left( \mu \right) \right) &<&\sup_{\mu \in [1-q,1]} \ratio_{\numS} \left(\OS{\numS},\mathcal{B} \left( \mu \right) \right).
\end{eqnarray*}
\end{lemma}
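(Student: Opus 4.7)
The plan is to derive a contradiction by chaining four inequalities: the two given strict conditions together with two monotonicity bounds in $r$ supplied by \Cref{lem:monotonic_phi}. The intuition is that for Bernoulli distributions with mean below the break-point $1-q$, the oracle orders zero inventory, so higher order statistics (which are more often equal to $1$) are \emph{worse}; whereas above $1-q$, the oracle orders one unit, so lower order statistics are worse. Hence the worst-case over each sub-interval is monotone in $r$, with opposite directions on $[0,1-q]$ and $[1-q,1]$.

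Concretely, I will invoke \Cref{lem:monotonic_phi} to obtain the two monotonicity comparisons
\begin{equation*}
\sup_{\mu \in [0,1-q]} \ratio_{\numS}\!\left(\OS{1},\mathcal{B}(\mu)\right) \;\le\; \sup_{\mu \in [0,1-q]} \ratio_{\numS}\!\left(\OS{\numS},\mathcal{B}(\mu)\right),
\end{equation*}
\begin{equation*}
\sup_{\mu \in [1-q,1]} \ratio_{\numS}\!\left(\OS{\numS},\mathcal{B}(\mu)\right) \;\le\; \sup_{\mu \in [1-q,1]} \ratio_{\numS}\!\left(\OS{1},\mathcal{B}(\mu)\right).
\end{equation*}
Assuming for contradiction that the two strict inequalities in the lemma statement both hold, I can then chain them as
\begin{equation*}
\sup_{\mu \in [0,1-q]} \ratio_{\numS}\!\left(\OS{\numS},\mathcal{B}(\mu)\right) \;<\; \sup_{\mu \in [1-q,1]} \ratio_{\numS}\!\left(\OS{\numS},\mathcal{B}(\mu)\right) \;\le\; \sup_{\mu \in [1-q,1]} \ratio_{\numS}\!\left(\OS{1},\mathcal{B}(\mu)\right) \;<\; \sup_{\mu \in [0,1-q]} \ratio_{\numS}\!\left(\OS{1},\mathcal{B}(\mu)\right) \;\le\; \sup_{\mu \in [0,1-q]} \ratio_{\numS}\!\left(\OS{\numS},\mathcal{B}(\mu)\right),
\end{equation*}
which is a contradiction since the first and last terms are identical.

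The only nontrivial piece of machinery is the monotonicity lemma, but it is already used (and proved) elsewhere in the appendix; it follows from the closed-form expressions in \Cref{lem:OS_vs_bern} combined with the fact that $r \mapsto B_{r,\numS}(\alpha)$ is decreasing on $[0,1]$, so on $\mu \in [0,1-q]$ the factor $(1-B_{r,\numS}(1-\mu))$ is increasing in $r$, while on $\mu \in [1-q,1]$ the factor $B_{r,\numS}(1-\mu)$ is decreasing in $r$. Thus the main (and essentially only) step is recognizing that the two sub-interval suprema are monotone in $r$ with opposite signs; the rest is a one-line contradiction argument. No delicate estimation is needed.
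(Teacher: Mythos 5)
Your proof is correct and takes essentially the same approach as the paper's: both arguments invoke \Cref{lem:monotonic_phi} to get the two cross-policy comparisons on each sub-interval and then chain these with the two assumed strict inequalities to derive a contradiction. The only difference is that you arrange the four inequalities into a closed cycle starting and ending at $\sup_{\mu \in [0,1-q]} \ratio_{\numS}(\OS{\numS},\mathcal{B}(\mu))$, whereas the paper's chain is a three-step path that contradicts the first hypothesis directly; the ingredients and logic are identical.
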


\begin{proof}[\textbf{Proof of \Cref{lem:single_cond}}]
Assume by contradiction that there exists $\numS$ such that,
\begin{eqnarray*}
\sup_{\mu \in [0,1-q]} \ratio_{\numS} \left(\OS{1},\mathcal{B} \left( \mu \right) \right) &>&\sup_{\mu \in [1-q,1]} \ratio_{\numS} \left(\OS{1},\mathcal{B} \left( \mu \right) \right)\\
\sup_{\mu \in [0,1-q]} \ratio_{\numS} \left(\OS{\numS},\mathcal{B} \left( \mu \right) \right) &<&\sup_{\mu \in [1-q,1]} \ratio_{\numS} \left(\OS{\numS},\mathcal{B} \left( \mu \right) \right).
\end{eqnarray*}
We have that,
\begin{align*}
\sup_{\mu \in [0,1-q]} \ratio_{\numS} \left(\OS{1},\mathcal{B} \left( \mu \right) \right)  
&\stackrel{(a)}{\leq} \sup_{\mu \in [0,1-q]} \ratio_{\numS} \left(\OS{\numS},\mathcal{B} \left( \mu \right) \right)\\ 
&\stackrel{(b)}{<} \sup_{\mu \in [1-q,1]} \ratio_{\numS} \left(\OS{\numS},\mathcal{B} \left( \mu \right) \right) \stackrel{(c)}{\leq} \sup_{\mu \in [1-q,1]} \ratio_{\numS} \left(\OS{1},\mathcal{B} \left( \mu \right) \right),
\end{align*}
where $(a)$ and $(c)$ follow from \Cref{lem:monotonic_phi}, and $(b)$ holds by assumption. We therefore obtain a contradiction.
\end{proof}

\begin{lemma}
For $\numS \geq \frac{2}{\min(q,(1-q))^2}$,
\label{lem:crossing}
\begin{eqnarray*}
\sup_{\mu \in [0,1-q]} \ratio_{\numS} \left(\OS{1},\mathcal{B} \left( \mu \right) \right) &\leq&\sup_{\mu \in [1-q,1]} \ratio_{\numS} \left(\OS{1},\mathcal{B} \left( \mu \right) \right)\\
\sup_{\mu \in [0,1-q]} \ratio_{\numS} \left(\OS{\numS},\mathcal{B} \left( \mu \right) \right) &\geq&\sup_{\mu \in [1-q,1]} \ratio_{\numS} \left(\OS{\numS},\mathcal{B} \left( \mu \right) \right).
\end{eqnarray*}
\end{lemma}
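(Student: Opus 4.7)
The strategy uses \Cref{lem:OS_vs_bern} with the explicit Bernstein polynomials $B_{1,\numS}(\alpha) = 1-(1-\alpha)^\numS$ and $B_{\numS,\numS}(\alpha) = \alpha^\numS$. After substituting $\alpha = 1-\mu$, one obtains the closed-form expressions
\bearn
\ratio_\numS(\OS{1},\mathcal{B}(\mu)) &=& \tfrac{(1-q-\mu)\mu^{\numS-1}}{q} \quad \text{for } \mu \in [0,1-q], \\
\ratio_\numS(\OS{1},\mathcal{B}(\mu)) &=& \tfrac{\mu-(1-q)}{1-q}\sum_{k=0}^{\numS-1}\mu^k \quad \text{for } \mu \in [1-q,1),
\eearn
and the analogous two formulas for $\OS{\numS}$. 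A direct inspection reveals that the formula for $\ratio_\numS(\OS{\numS},\mathcal{B}(\mu))$ on $[0,1-q]$ (resp.\ $[1-q,1]$) coincides exactly with that of $\ratio_\numS(\OS{1},\mathcal{B}(\tilde\mu))$ on $[1-\tilde q,1]$ (resp.\ $[0, 1-\tilde q]$) after the substitution $\tilde q = 1-q$, $\tilde \mu = 1-\mu$. Since the hypothesis $\numS \geq 2/\min(q,1-q)^2$ is invariant under $q \leftrightarrow 1-q$, the second inequality follows from the first by this symmetry, so it suffices to prove the first.

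For the first inequality, a standard derivative calculation on $\mu \mapsto (1-q-\mu)\mu^{\numS-1}$ shows that its unique interior maximizer on $[0,1-q]$ is $\mu^{\star} = (1-q)(\numS-1)/\numS$, giving
\beqn
\sup_{\mu \in [0,1-q]} \ratio_\numS(\OS{1},\mathcal{B}(\mu)) = \tfrac{(1-q)^\numS (1-1/\numS)^{\numS-1}}{q\numS} \leq \tfrac{1}{q\numS}.
\eeqn
On the other side, the map $\mu \mapsto \tfrac{\mu-(1-q)}{1-q}\sum_{k=0}^{\numS-1}\mu^k$ is strictly increasing on $[1-q,1)$ with limit $\tfrac{q\numS}{1-q}$ as $\mu \to 1^-$, so $\sup_{\mu \in [1-q,1]} \ratio_\numS(\OS{1},\mathcal{B}(\mu)) = \tfrac{q\numS}{1-q}$.

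The required inequality thus reduces to $\tfrac{1}{q\numS} \leq \tfrac{q\numS}{1-q}$, i.e.\ $(q\numS)^2 \geq 1-q$, which is implied by $q\numS \geq 1$. A short case analysis on the hypothesis $\numS \geq 2/\min(q,1-q)^2$ closes the argument: if $q \leq 1-q$, then $\numS \geq 2/q^2$ gives $q\numS \geq 2/q \geq 2$; if $q > 1-q$ (so $q > 1/2$), then $\numS \geq 2/(1-q)^2$ gives $q\numS \geq q \cdot 2/(1-q)^2 \geq 1/(1-q)^2 \geq 1$. The proof is essentially computational; the only subtlety worth noting is that the supremum on $[1-q,1]$ is approached only in the limit $\mu \to 1^-$ (by convention $\ratio_\numS(\OS{1},\mathcal{B}(1)) = 0$), but monotonicity of the relevant function ensures the supremum value equals $q\numS/(1-q)$.
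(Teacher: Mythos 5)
Your proof is correct, and it takes a noticeably different route from the paper's. You compute both suprema in closed form -- the interior maximizer $\mu^\star = (1-q)(\numS-1)/\numS$ on $[0,1-q]$ gives $\frac{(1-q)^\numS(1-1/\numS)^{\numS-1}}{q\numS}$, and monotonicity on $[1-q,1)$ gives the boundary value $\frac{q\numS}{1-q}$ -- reducing the claim to the clean inequality $(q\numS)^2 \geq 1-q$, which follows from $q\numS \geq 1$. The paper instead only \emph{bounds} the two sides: it upper-bounds $\sup_{\mu \in [0,1-q]}$ by $(1-q)^\numS/q$ using crude term-by-term estimates, lower-bounds $\sup_{\mu \in [1-q,1]}$ by evaluating at the single test point $\alpha = 1/\numS$ (giving $(1-e^{-1})/q$), and then verifies $(1-q)^\numS \leq 1-e^{-1}$ under the hypothesis, with a separate split on whether $q \gtrless \tfrac12$. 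Your closed-form computation both sharpens the comparison and simplifies the final arithmetic. You also add the symmetry reduction $q \leftrightarrow 1-q$, $\mu \leftrightarrow 1-\mu$ that maps $\OS{\numS}$ to $\OS{1}$ and makes the second inequality automatic; the paper instead repeats the argument ``similarly'' for $\OS{\numS}$. Your handling of the convention at $\mu = 1$ (where $\opt(\mathcal{B}(1))=0$, so the ratio is $0$ by convention, but the supremum is still the limiting value $q\numS/(1-q)$) is correct and worth flagging, as you did.
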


\begin{proof}[\textbf{Proof of \Cref{lem:crossing}.}]
For each $r \in \{1, \ldots, \numS\}$ we define $\phi^{\numS}_{r}$ such that for every $\alpha \in [0,1]$,
\begin{equation*}
\phi_{r}^{\numS}(\alpha) = \ratio_n \left( \OS{r}, \mathcal{B}(1-\alpha) \right). 
\end{equation*}

We first show that,
\begin{equation}
\label{eq:order_stat_1_imbalance}
\sup_{\alpha \leq q} \phi^{\numS}_{1}(\alpha) \geq \sup_{\alpha \geq q} \phi^{\numS}_{1}(\alpha).
\end{equation}
Note that by \Cref{lem:OS_vs_bern} we have for all $r \in \{1,\ldots, \numS \}$,
\bearn
\phi^{\numS}_{r}(\alpha)  &=&  \begin{cases}
\frac{(q - \alpha ) B_{r,\numS}(\alpha) }{(1-q)\alpha} \qquad \text{if $\alpha \in [0,q],$}\\
\frac{(\alpha - q) \left( 1 -  B_{r,\numS}( \alpha) \right) }{(1-\alpha) q} \qquad \text{if $\alpha \in [q,1]$.}
\end{cases}
\eearn
Observe that by definition, 
$B_{1,\numS}(\alpha) = \sum_{j=1}^\numS {\numS \choose j} \alpha^{j} (1- \alpha)^{\numS-j} = 1 - (1-\alpha)^{\numS}$.
Hence, for $\alpha \geq q$
\begin{align*}
\phi_1^{\numS}(\alpha) &= \frac{(\alpha - q)( 1 - \alpha)^\numS}{q(1-\alpha)} \leq \frac{(1-q)^\numS}{q}. 
\end{align*}
Moreover, $\numS \geq \frac{2}{\min(q,(1-q))^2}$ implies that $\frac{1}{\numS} \leq  \min(q,(1-q))^2 \leq q^2 \leq q$. Therefore, we have that,
\begin{align*}
\sup_{\alpha \leq q} \phi^{\numS}_{1}(\alpha) \geq \phi^{\numS}_{1} \left( \frac{1}{\numS} \right) = \frac{(q - \numS^{-1})(1- (1- \numS^{-1})^{\numS})}{(1-q)\numS^{-1}} \stackrel{(a)}{\geq} \frac{q(1 - q)(1- e^{-1})}{(1-q)q^2} =   \frac{1- e^{-1}}{q}
\end{align*}
where inequality $(a)$ holds because $\numS^{-1} \leq q^2$ and $\left(1 - \frac{1}{n} \right)^n \leq e^{-1}$.
When $q \geq \frac{1}{2}$, we remark that since $\numS \geq 4$, \eqref{eq:order_stat_1_imbalance} must hold because,
\begin{equation*}
1-e^{-1} \geq \frac{1}{2^\numS} \geq (1-q)^\numS.
\end{equation*}
To prove equation \eqref{eq:order_stat_1_imbalance} for $q \leq \frac{1}{2}$, as $\numS \geq 2q^{-2}$, we remark that, $ (1-q)^\numS \leq (1-q)^{q^{-2}}$, therefore
 it is sufficient to show that 
\begin{equation*}
(1-e^{-1}) \geq (1-q)^{q^{-2}}.
\end{equation*}
This holds because $q \mapsto(1-q)^{q^{-2}}$ is non-decreasing 
on $[0,\frac{1}{2}]$ and the inequality is true at $q = \frac{1}{2}$.

\vspace{2mm}
\noindent We similarly show that,
\begin{equation*}
\sup_{\alpha \leq q} \phi^{\numS}_{\numS}(\alpha) \leq \sup_{\alpha \geq q} \phi^{\numS}_{\numS}(\alpha).
\end{equation*}
To do so, we first remark that, $B_{\numS,\numS}(\alpha) = \alpha^\numS$ and, for any $\alpha \leq q$,
$\phi^{\numS}_{\numS}(\alpha) \leq \frac{q^\numS}{1-q}$
and,
\begin{equation*}
\sup_{\alpha \geq q} \phi^{\numS}_{\numS}(\alpha) \geq \frac{1-e^{-1}}{1-q}.
\end{equation*}
We then conclude in a similar way.
\end{proof}

\begin{lemma}
\label{lem:monotonic_phi}
For any $\mu \leq 1-q$ and any $r \in \{1,\ldots,\numS -1 \} $,
\begin{equation*}
 \ratio_{\numS} \left(\OS{r}, \mathcal{B}(\mu)) \right) \leq  \ratio_{\numS} \left(\OS{r+1}, \mathcal{B}(\mu)) \right) .
\end{equation*}
Inequality is strict for $\mu \not \in \{  0,1-q \}$. Furthermore,
\begin{equation*}
 \sup_{\mu \in[0,1-q]} \ratio_{\numS} \left(\OS{r}, \mathcal{B}(\mu)) \right) <   \sup_{\mu \in[0,1-q]} \ratio_{\numS} \left(\OS{r+1}, \mathcal{B}(\mu)) \right) .
\end{equation*}
 Similarly, for any $\mu \geq 1-q$ and any $r \in \{1,\ldots,\numS -1 \} $,
\begin{equation*}
 \ratio_{\numS} \left(\OS{r}, \mathcal{B}(\mu)) \right) \geq  \ratio_{\numS} \left(\OS{r+1}, \mathcal{B}(\mu)) \right) .
\end{equation*}
Inequality is strict for $\mu \not \in \{  1-q,1 \}$. Furthermore
\begin{equation*}
 \sup_{\mu \in[1-q,1]}  \ratio_{\numS} \left(\OS{r}, \mathcal{B}(\mu)) \right) >  \sup_{\mu \in[1-q,1]}  \ratio_{\numS} \left(\OS{r+1}, \mathcal{B}(\mu)) \right) .
\end{equation*}
\end{lemma}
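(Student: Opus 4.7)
\noindent\textbf{Proof plan for \Cref{lem:monotonic_phi}.} The plan is to read off closed form expressions for $\ratio_n(\OS{r},\mathcal{B}(\mu))$ from \Cref{lem:OS_vs_bern} and exploit the elementary monotonicity of the Bernstein tail $B_{r,n}$ in $r$. Setting $\alpha := 1-\mu$, \Cref{lem:OS_vs_bern} yields
\begin{equation*}
\ratio_n(\OS{r},\mathcal{B}(\mu)) =
\begin{cases}
\dfrac{(\alpha-q)\bigl(1 - B_{r,n}(\alpha)\bigr)}{(1-\alpha)q} & \text{if } \mu \leq 1-q \text{ (i.e., } \alpha \geq q\text{),}\\[1mm]
\dfrac{(q-\alpha) B_{r,n}(\alpha)}{(1-q)\alpha} & \text{if } \mu \geq 1-q \text{ (i.e., } \alpha \leq q\text{),}
\end{cases}
\end{equation*}
with the endpoints $\mu \in \{0,1\}$ handled by convention ($\opt=0$ so $\ratio = 0$). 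Since $B_{r+1,n}(\alpha) = B_{r,n}(\alpha) - b_{r,n}(\alpha)$ and $b_{r,n}(\alpha) = \binom{n}{r}\alpha^r(1-\alpha)^{n-r}$, the map $r \mapsto B_{r,n}(\alpha)$ is non-increasing, strictly so whenever $\alpha \in (0,1)$.

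For the first inequality, fix $\mu \in [0,1-q]$, so $\alpha \in [q,1]$. The nonnegative prefactor $(\alpha-q)/((1-\alpha)q)$ does not depend on $r$, while $1-B_{r,n}(\alpha)$ is non-decreasing in $r$, which proves the claim. Strictness for $\mu \in (0,1-q)$ follows because $\alpha \in (q,1) \subset (0,1)$ yields $b_{r,n}(\alpha)>0$, hence $B_{r,n}(\alpha) > B_{r+1,n}(\alpha)$; in that same range the prefactor $(\alpha - q)/((1-\alpha)q)$ is also strictly positive.

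To promote pointwise strict inequality to strict inequality of the suprema, I will argue that the supremum over $[0,1-q]$ is attained in the interior. The map $\mu \mapsto \ratio_n(\OS{r},\mathcal{B}(\mu))$ is continuous on $[0,1-q]$ (with value $0$ at the endpoints $\mu=0$ and $\mu=1-q$), and strictly positive on $(0,1-q)$ because $B_{r,n}(\alpha) < 1$ for $\alpha \in (q,1)$ (all binomial tails are strict at $\alpha < 1$). Hence the supremum is positive and attained at some $\mu^\star \in (0,1-q)$, where the already established pointwise strict inequality gives
\begin{equation*}
\sup_{\mu \in [0,1-q]} \ratio_n(\OS{r},\mathcal{B}(\mu)) = \ratio_n(\OS{r},\mathcal{B}(\mu^\star)) < \ratio_n(\OS{r+1},\mathcal{B}(\mu^\star)) \leq \sup_{\mu \in [0,1-q]} \ratio_n(\OS{r+1},\mathcal{B}(\mu)).
\end{equation*}

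The second half (for $\mu \geq 1-q$) is entirely analogous: one now uses the formula $(q-\alpha)B_{r,n}(\alpha)/((1-q)\alpha)$ with $\alpha \in [0,q]$, and the fact that $B_{r,n}(\alpha)$ is non-increasing (strictly for $\alpha \in (0,q) \subset (0,1)$) in $r$ to flip the monotonicity. The same interior-attainment argument then upgrades to strict inequality of the suprema. There is no real obstacle here; the only points requiring mild care are the boundary cases $\mu \in \{0,1-q,1\}$, which are handled by the convention for the relative regret and by the continuity of $B_{r,n}$.
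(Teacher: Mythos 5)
Your proof is correct and follows essentially the same route as the paper's: read off the closed form from \Cref{lem:OS_vs_bern}, use the telescoping identity $B_{r,\numS}(\alpha)-B_{r+1,\numS}(\alpha)=b_{r,\numS}(\alpha)>0$ on $(0,1)$ for the pointwise (strict) comparison, and then upgrade to the suprema by arguing that the maximizer lies in the open interval (the function vanishes at both endpoints and is positive in between, noting $r\le \numS-1$ keeps the map continuous at $\mu=0$). One small point worth making explicit when you write "entirely analogous" for the $\mu\ge 1-q$ half: there you should attain the supremum of the pointwise-\emph{smaller} map $\ratio_\numS(\OS{r+1},\cdot)$ at an interior $\mu^\star$, so the chain reads $\sup\ratio_\numS(\OS{r+1},\cdot)=\ratio_\numS(\OS{r+1},\mathcal{B}(\mu^\star))<\ratio_\numS(\OS{r},\mathcal{B}(\mu^\star))\le \sup\ratio_\numS(\OS{r},\cdot)$.
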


\begin{proof}[\textbf{Proof of \Cref{lem:monotonic_phi}}]
For each $r \in \{1, \ldots, \numS\}$ we define $\phi^{\numS}_{r}$ such that for every $\alpha \in [0,1]$,
\begin{equation*}
\phi_{r}^{\numS}(\alpha) = \ratio_n \left( \OS{r}, \mathcal{B}(1-\alpha) \right). 
\end{equation*}

Remark that for all $r \in \{1, \ldots, \numS-1\}$ and $\alpha \leq q$ we obtain from \Cref{lem:OS_vs_bern} that,
\begin{align*}
\phi^{\numS}_{r}(\alpha) &=  \frac{(q - \alpha) B_{r,\numS}(\alpha) }{(1-q)\alpha} \\
&= \frac{(q - \alpha) B_{r+1,\numS}(\alpha) }{(1-q)\alpha} + \frac{(q-\alpha)b_{r+1,\numS}(\alpha)}{(1-q)\alpha}\\
&= \phi^{\numS}_{r+1}(\alpha) + \frac{(q-\alpha)b_{r+1,\numS}(\alpha)}{(1-q)\alpha} \geq \phi^{\numS}_{r+1}(\alpha).
\end{align*}
Note that, $b_{r+1,\numS}(\alpha) > 0$ for $\alpha \in (0,1)$ thus the above inequality is an equality only for $\alpha \in \{0,q\}$. Moreover, let $\alpha^* \in \argmax_{\alpha \in [0,q]} \phi^{\numS}_{r}(\alpha)$ (which exists by continuity of $\phi^{\numS}_r$). Remark that $\alpha^* \in (0,q)$ because, $\phi^{\numS}_r(0) = \phi^{\numS}_r(q) = 0$ and $\phi^{\numS}_r(q/2) >0$.  Therefore, we have
\begin{equation*}
\sup_{\alpha \in [0,q]} \phi^{\numS}_{r}(\alpha) = \phi^{\numS}_{r}(\alpha^*) > \phi^{\numS}_{r+1}(\alpha^*) \geq \sup_{\alpha \in [0,q]} \phi^{\numS}_{r+1}(\alpha).
\end{equation*}

Similarly, we have that for $\alpha \geq q$,
\begin{align*}
\phi^{\numS}_{r}(\alpha) &=   \frac{(\alpha - q) \left(1 -  B_{r,\numS}(\alpha) \right) }{q (1 -\alpha)} \\
&=  \frac{(\alpha - q) \left(1 -  B_{r+1,\numS}(\alpha) \right) }{q (1 -\alpha)} - \frac{(\alpha - q) b_{r+1,\numS}(\alpha) }{q (1- \alpha)}\\
&= \phi^{\numS}_{r+1}(\alpha) - \frac{(\alpha - q) b_{r+1,\numS}(\alpha) }{q (1- \alpha)} \leq \phi^{\numS}_{r+1}(\alpha).
\end{align*}
We conclude by an argument similar to the one derived in the case where $\alpha \in [0,q]$ that equality holds only for $\alpha \in \{q, 1\}$ and that, $\sup_{\alpha \in [q,1]} \phi^{\numS}_{r}(\alpha) < \sup_{\alpha \in [q,1]} \phi^{\numS}_{r+1}(\alpha).$
\end{proof} 

\begin{lemma}
\label{lem:levi_bound}
Let $U(\numS) = \int_0^\infty 2 \exp \left (-\frac{\numS \epsilon^2}{18 + 8 \epsilon} \min(q,1-q) \right) d\epsilon$. Then,
$ U(\numS) = \mathcal{O} \left( \frac{1}{\sqrt{\numS}} \right)$.
\end{lemma}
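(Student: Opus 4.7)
The plan is to split the integral defining $U(\numS)$ at $\epsilon = 1$ and bound each piece separately by replacing the denominator $18 + 8\epsilon$ with a simpler upper bound valid in the corresponding range. Set $c := \min(q, 1-q) > 0$ so that
\begin{equation*}
U(\numS) = 2\int_0^\infty \exp\!\left(-\frac{c\numS\epsilon^2}{18 + 8\epsilon}\right) d\epsilon.
\end{equation*}

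For the small-$\epsilon$ piece ($0 \le \epsilon \le 1$), I would use $18 + 8\epsilon \le 26$, turning the integrand into a Gaussian-type bound $\exp(-c\numS\epsilon^2/26)$. Extending the domain of integration to all of $\mathbb{R}_+$ gives
\begin{equation*}
\int_0^1 \exp\!\left(-\frac{c\numS\epsilon^2}{18+8\epsilon}\right) d\epsilon \le \int_0^\infty \exp\!\left(-\frac{c\numS\epsilon^2}{26}\right) d\epsilon = \frac{1}{2}\sqrt{\frac{26\pi}{c\numS}},
\end{equation*}
which is $\mathcal{O}(1/\sqrt{\numS})$.

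For the large-$\epsilon$ piece ($\epsilon \ge 1$), I would use $18 + 8\epsilon \le 18\epsilon + 8\epsilon = 26\epsilon$, which turns the integrand into a pure exponential bound $\exp(-c\numS\epsilon/26)$. Then
\begin{equation*}
\int_1^\infty \exp\!\left(-\frac{c\numS\epsilon^2}{18+8\epsilon}\right) d\epsilon \le \int_1^\infty \exp\!\left(-\frac{c\numS\epsilon}{26}\right) d\epsilon = \frac{26}{c\numS}\exp\!\left(-\frac{c\numS}{26}\right),
\end{equation*}
which is $\mathcal{O}(1/\numS) = o(1/\sqrt{\numS})$. Summing the two bounds yields $U(\numS) = \mathcal{O}(1/\sqrt{\numS})$, completing the proof. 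There is no real obstacle here; the only thing to be slightly careful about is choosing the splitting point so that in each region one of the terms in $18 + 8\epsilon$ dominates cleanly, which is exactly why $\epsilon = 1$ is a convenient cutoff.
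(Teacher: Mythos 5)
Your proof is correct and follows essentially the same strategy as the paper: split the integral at a constant cutoff, bound the small-$\epsilon$ piece by a Gaussian tail and the large-$\epsilon$ piece by a pure exponential tail, each contributing $\mathcal{O}(1/\sqrt{\numS})$ and $\mathcal{O}(1/\numS)$ respectively. Your version is in fact a bit cleaner: your upper bounds on $18 + 8\epsilon$ go in the right direction on both pieces, and you invoke the closed-form Gaussian integral $\int_0^\infty e^{-a\epsilon^2}d\epsilon = \tfrac{1}{2}\sqrt{\pi/a}$ directly, whereas the paper instead sub-splits at $\epsilon = 1/\sqrt{\numS}$ and uses a $\tfrac{\epsilon}{\epsilon}$ rewriting trick to avoid that formula.
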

\begin{proof}[\textbf{Proof of \Cref{lem:levi_bound}}]
Remark that
\begin{align*}
 U(\numS) &= \int_0^\infty 2 \exp \left(-\frac{\numS \epsilon^2}{18 + 8 \epsilon} \min(q,1-q) \right) d\epsilon\\
 &\leq 2 \left( \int_0^{\frac{18}{8}} \exp \left (-\frac{\numS \epsilon^2}{18} \min(q,1-q) \right) d \epsilon +     \int_{\frac{18}{8}}^{\infty}   \exp \left (-\frac{\numS \epsilon}{8 } \min(q,1-q)  \right) \right) d \epsilon \\
 &= 2  \int_0^{\frac{18}{8}} \exp \left (-\frac{\numS \epsilon^2}{18} \min(q,1-q) \right) d \epsilon + \mathcal{O} \left( \frac{1}{\numS} \right)\\
 &\leq \frac{2}{\sqrt{\numS}} + 2\int_{\frac{1}{\sqrt{\numS}}}^{\frac{18}{8}} \frac{\epsilon}{\epsilon} \exp \left (-\frac{\numS \epsilon^2}{18} \min(q,1-q) \right) d \epsilon + \mathcal{O} \left( \frac{1}{\numS} \right)\\
 &\leq \frac{2}{\sqrt{\numS}} + 2\sqrt{\numS} \int_{\frac{1}{\sqrt{\numS}}}^{\frac{18}{8}} \epsilon \exp \left (-\frac{\numS \epsilon^2}{18} \min(q,1-q) \right) d \epsilon + \mathcal{O} \left( \frac{1}{\numS} \right)\\
&= \frac{1}{\sqrt{\numS}} \left(2 + \frac{18}{\min\left(q,1-q\right)} \exp \left (-\frac{\min(q,1-q)}{18}  \right)  \right) + o \left( \frac{1}{\sqrt{\numS}} \right).\end{align*}
\end{proof}

\section{Discussion: Algorithms and Performance}

\subsection{Suboptimality of SAA}\label{app:subopt_SAA}

\Cref{prop:necessary} provides a necessary condition for a policy to be optimal. We present in \Cref{fig:SAA_variation} a counter-example showing that SAA is not always achieving this necessary condition. \Cref{fig:SAA_variation} presents the performance of SAA against Bernoulli distributions with different means with a value of $q =.9$ and $\numS= 20$. 
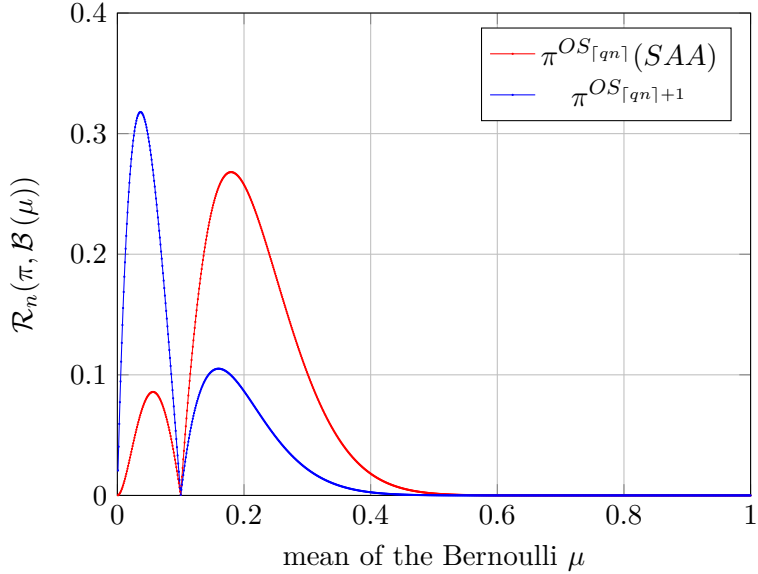
\begin{figure}[h!]
\begin{center}
\begin{tikzpicture}
\begin{axis}[
            title={},
            xmin=0,xmax=1,
            ymin=0.0,ymax=.4,
            width=10cm,
            height=8cm,
            table/col sep=comma,
            xlabel = mean of the Bernoulli $\mu$,
            ylabel = ${\ratio_{\numS}  (\pi,\mathcal{B} \left(\mu\right) )}$,
            grid=both,
            skip coords between index={0}{1},
            legend pos=north east]      
\addplot [red,mark=*,mark options={scale=.1}] table[x=mus,y=SAA] {Data/intuition_subopt_SAA_n20_0.9.csv};
    \addlegendentry{$\OS{\ceil{q \numS}} (SAA)$ }
\addplot [blue,mark=*,mark options={scale=.1}] table[x=mus,y=SAA+1] {Data/intuition_subopt_SAA_n20_0.9.csv};
    \addlegendentry{$ \OS{\ceil{q \numS} + 1}$}
\end{axis}
\end{tikzpicture}
\end{center}
\caption{\textbf{ Performance of the $\mathbf{\ceil{qn}^{th}}$ (SAA) and $\mathbf{(\ceil{qn}+1)^{th}}$  order statistic policies against Bernoulli distributions}. The figure depicts the performance of two order statistic policies against Bernoulli distribution as the mean $\mu$ varies ($q=.9$, $\numS=20$).  }\label{fig:SAA_variation}
\end{figure}

We observe  in that case that
\begin{equation*}
\sup_{\mu \in [0,1-q]} \ratio_{\numS} \left(\SAA,\mathcal{B} \left( \mu \right) \right) < \sup_{\mu \in [1-q,1]} \ratio_{\numS} \left( \SAA,\mathcal{B} \left( \mu \right) \right).
\end{equation*}
This implies the suboptimality of SAA according to \Cref{prop:necessary}. We note from \Cref{fig:SAA_variation} that, in this example, SAA suffers from a larger regret in the mode associated with large values of $\mu$ compared to the mode associated with smaller values of $\mu$. In contrast, the $(\ceil{qn}+1)^{th}$  order statistic policy suffers from a larger regret than SAA  on the mode associated with the small values of $\mu$ and from a smaller one in the regime where $\mu$ is large. This observation implies that a carefully chosen randomization of both policies would perform strictly better than SAA.

\subsection{Insight on the Minimax Optimal Policy}\label{apx:values_k}

The algorithm derived in \Cref{thm:optimal_min_max} is defined as a randomization of the $(k-1)^{th}$ and $k^{th}$ order statistics for some $k \in \{2,\ldots,\numS\}$. However, \Cref{thm:optimal_min_max} does not provide any quantification of the value of $k$. We present in \Cref{tab:values_k} the values of $k$ for different critical quantiles obtained by computing the minimax optimal policy for sample size smaller than $200$.
\begin{table}[h!]
\centering
\begin{tabular}{ccc}
 &$k = \ceil{q\numS}$ & $k = \ceil{q\numS} + 1$\\
 \hline
$q=.7$& $40.5\%$ & $59.5\%$ \\
\hline
$q=.8$& $41\%$ & $59\%$ \\
\hline
$q=.9$& $42.5\%$ & $57.5\%$ 
\end{tabular}
\caption{\textbf{Parameter of the minimax optimal policy}. The table presents the proportion of time the parameter $k$ (defined in \Cref{thm:optimal_min_max}) is respectively equal to $\ceil{q\numS}$ and $\ceil{q\numS} + 1$ for different values of $q$. This proportion is derived by computing the parameters of the optimal policy for any data size smaller than $200$.}
\label{tab:values_k}
\end{table}
Recall that SAA uses the $\ceil{q\numS}^{th}$ order statistic.  Therefore, \Cref{tab:values_k} shows that for the first $200$ samples, the minimax optimal policy always has in its support SAA and a neighboring order statistic. The relation between  $k$ and $\ceil{q\numS}$ is further discussed in the proof of \Cref{thm:asymptotic}. We show in the proof that $k$ has to scale as $\ceil{q \numS} + o(\sqrt{\numS})$.

\subsection{Algorithmic Implementation of the Optimal Policy}\label{apx:implement_alg}

\Cref{thm:optimal_min_max} presents the structure of the optimal data-driven policy. We next detail how to find the optimal tuning parameters $k$ and $\gamma$  for an optimal policy. To that end, we establish an additional structural result on single order statistic policies. We show that for any $r \in \{1,\ldots, \numS - 1 \}$,
\bear
\label{eq:monotonic_phi_1}
\ratio_{\numS} \left(\OS{r}, \mathcal{B}(\mu)) \right) &\leq&  \ratio_{\numS} \left(\OS{r+1}, \mathcal{B}(\mu)) \right)  \qquad \mbox{for all } \mu \leq 1-q\\
\label{eq:monotonic_phi_2}
 \ratio_{\numS} \left(\OS{r}, \mathcal{B}(\mu)) \right) &\geq&  \ratio_{\numS} \left(\OS{r+1}, \mathcal{B}(\mu)) \right)
 \qquad \mbox{for all } \mu \geq 1-q.
\eear 
This is formally stated by \Cref{lem:monotonic_phi} presented in \Cref{app:add_aux}.

Equations \eqref{eq:monotonic_phi_1} and \eqref{eq:monotonic_phi_2} formalize the fact that, against Bernoulli distributions, the performance of smaller order statistics is worse than larger ones when the mean is large, as they tend to underestimate the optimal inventory quantity. On the contrary, they perform better than larger order statistics when the mean is smaller than $1-q$ since underestimating is valuable in that case.

Given equations \eqref{eq:monotonic_phi_1} and \eqref{eq:monotonic_phi_2} , we now present an efficient algorithm to compute the parameters of the optimal policy. \Cref{alg:optimal} only needs to perform $\mathcal{O} \left( \log ( \numS) \right)$ line searches in order to find an order statistic $k$ such that \eqref{eq:crossing_1} and \eqref{eq:crossing_2} are satisfied.

{\setstretch{1}
\medskip
\begin{algorithm}[H]
\hrulefill\\
 \KwData{critical fractile $q$, number of samples $\numS$}
 \KwResult{Order statistic ranking $k$, weight $\gamma$ and optimal value $\ratio_{\numS}^{*}$}
 \eIf{\eqref{eq:extreme_1} and \eqref{eq:extreme_n} hold}{
 Set $j= 1$ and $k= \numS$\;
 \While{j < k}{
  $m = (j+k)/2$\;
  \eIf{ $\sup_{\mu \in [1-q,1]} \ratio_n\left(\OS{m}, \mathcal{B}(\mu)) \right) - \sup_{\mu \in [0,1-q]} \ratio_n\left(\OS{m}, \mathcal{B}(\mu)) \right) \geq 0$}{
 $j= m+1$\;
   }{
 $k= m$\;
 }
}
Find the solution $\gamma$ of the following equation by performing a line search to solve $\sup_{\mu \in[1-q,1]} \gamma \ratio_n\left(\OS{k}, \mathcal{B}(\mu)) \right) + ( 1 - \gamma) \ratio_n\left(\OS{k-1}, \mathcal{B}(\mu)) \right) = \sup_{\mu \in [0,1 - q]} \gamma \ratio_n\left(\OS{k}, \mathcal{B}(\mu)) \right)  + ( 1 - \gamma)\ratio_n\left(\OS{k-1}, \mathcal{B}(\mu)) \right) $\;
}
{
$\gamma = 1$\;
If \eqref{eq:extreme_1} does not hold, $k=1$, whereas if \eqref{eq:extreme_n} does not hold, $k=\numS$\;
}
 Set $\ratio_{\numS}^{*} =\sup_{\mu \in[1-q,1]} \gamma \ratio_n\left(\OS{k}, \mathcal{B}(\mu)) \right) + ( 1 - \gamma ) \ratio_n\left(\OS{k-1}, \mathcal{B}(\mu)) \right)$\;
 \Return $k$, $\gamma$, $\ratio_{\numS}^{*}$\;
\hrulefill\\
\caption{Optimal data-driven policy}
\label{alg:optimal}
\end{algorithm}
\medskip
}

\end{document}